\def\LaTeX{\leavevmode L\raise.42ex
    \hbox{\kern-.3em\size{\sf@size}{0pt}\selectfont A}\kern-.15em\TeX}
\newcommand{\BibTeX}{{\rm B\kern-.05em{\sc
          i\kern-.025emb}\kern-.08em\TeX}}
\def\@currentlabel{2.1}\label{e:dispaa}
\def\@currentlabel{2.21}\label{e:dispau}
\def\@currentlabel{2.22}\label{e:dispav}
\def\@currentlabel{2.23}\label{e:dispaw}
\def\@currentlabel{2.24}\label{e:dispax}
\def\theequation{\thesection.\@arabic\c@equation}
\newcounter{mnotecount}[section]
\newcommand{\rmnote}[1]{}
\newtheorem{Theorem}{Theorem}[section]
\newtheorem{Lemma}{Lemma}[section]
\newtheorem{Definition}{Definition}[section]
\renewcommand{\theequation}{\thesection.\arabic{equation}}
\date{}
\newcommand{\beq}{\begin{eqnarray}}
\newcommand{\eeq}{\end{eqnarray}}
\newcommand{\beqno}{\begin{eqnarray*}}
\newcommand{\eeqno}{\end{eqnarray*}}
\newcommand{\be}{\begin{equation}}
\newcommand{\ee}{\end{equation}}
\newcommand{\B}{\mathbb B}
\newcommand{\R}{\mathbb{R}}
\newcommand{\aint}{-\!\!\!\!\!\!\int}
\def \d2{\Delta_{+}^2}
\begin{document}

\title{Forward self-similar solutions to the viscoelastic Navier-Stokes equation with damping}

\author{Baishun Lai, Junyu Lin, Changyou Wang}
\maketitle
\date{}
\begin{abstract}
Motivated by \cite{JS}, we prove that there exists a global, forward self-similar solution to the viscoelastic Navier-Stokes equation with damping, that is smooth for $t>0$,  for any initial data  that is homogeneous of degree $-1$.
\end{abstract}

\noindent
{\small {\bf Mathematics Subject Classification (2000):} 76B03, 35Q31.

\smallskip
\noindent
{\bf Key words:} viscoelastic fluid equation, self-similar solution, $\epsilon$-regularity criteria.}

\setcounter{section}{0}
\setcounter{equation}{0}
\section{Introduction }

The Oldroyd model for an incompressible, (linear) viscoelastic fluid in $\mathbb R^3$ 
is given by the following system of equations:
\begin{align}\label{E1.1+}  
\left\{
\begin{array}{llll}
\ \ \partial_{t}u+(u\cdot\nabla)u=\mu\Delta u-\nabla p+\nabla\cdot FF^{t},\\
\partial_{t}F+(u\cdot\nabla)F=\nabla uF,\\
\qquad\qquad\nabla\cdot u=0,
\end{array}\right.\  \ \  \ \ \ \mbox{in}\ \ \mathbb R^3\times (0,\infty)
\end{align}
where $u:\R^3\times(0,+\infty)\rightarrow \R^3$   represents the fluid velocity, $F:\R^3\times(0,+\infty)\rightarrow \R^{3\times3}$ represents the local deformation tensor of the fluid,  $p:\R^3\times(0,+\infty)\rightarrow \R$ represents the fluid pressure, and $\mu>0$ is the viscosity constant.
The $i^{th}$-component of $\nabla\cdot FF^{t}$ equals to $\nabla_{j}(F_{ik}F_{jk})$. 
The system \eqref{E1.1+} is also called as the viscoelastic Navier-Stokes equation.
A straightforward calculation implies that for any smooth solution $(u, p, F)$ to \eqref{E1.1+}, that decays sufficiently rapid 
near  infinity,  the energy satisfies
\begin{equation}\label{I1.1}
\frac{d}{dt}\Big\{\frac12\int_{\R^3}(|u|^2+|F|^2)\, dx\Big\}=-\mu\int_{\R^3}|\nabla u|^2\, dx,\ t>0.
\end{equation}
There have been several interesting works on the initial value problem of \eqref{E1.1+} by \cite{LL, LZ, HW} 
asserting both the existence of short time smooth solution and the global existence of smooth solution
for small initial data.  For large (rough) initial data, the global existence of weak solutions to \eqref{E1.1+}
has recently been achieved by \cite{HL1, HL2} in dimension two, but remains open in dimension three.
In pursuing global weak solutions of \eqref{E1.1+}, the authors of \cite{LL} proposed the following
viscoelastic Navier-Stokes equations with damping approximating \eqref{E1.1+}:
\begin{align}\label{E1.2+}  
\left\{
\begin{array}{llll}
\ \ \partial_{t}u+(u\cdot\nabla)u=\mu\Delta u-\nabla p+\nabla\cdot FF^{t},\\
\partial_{t}F+(u\cdot\nabla)F=\nu\Delta F+\nabla uF,\\
\qquad\qquad\nabla\cdot u=0,
\end{array}\right. \qquad\qquad \mbox{in}\ \ \mathbb R^3\times (0,\infty)
\end{align}
where $\nu>0$ is a damping constant.  Following the scheme by \cite{C} on the Navier-Stokes equation, 
it is not hard to establish the existence of global weak solutions $(u_\nu, p_\nu, F_\nu)$ to \eqref{E1.2+}.  It
is a challenging open problem to show that $(u_\nu, p_\nu, F_\nu)$ converges to a global weak solution
of \eqref{E1.1+} as $\nu\rightarrow 0$.  Nevertheless, the system \eqref{E1.2+} itself is an interesting 
system that deserves to be studied, besides that better understanding of \eqref{E1.2+} may be useful for \eqref{E1.1+}. 
We would like to mention a few basic facts on smooth solutions $(u,p, F)$ of \eqref{E1.2+}:
\begin{itemize}
\item[(i)] If $(u,p,F)$ has sufficiently rapid decay at infinity, then 
\begin{equation}\label{I1.2}
\frac{d}{dt}\Big\{\frac12\int_{\R^{3}}(|u|^{2}+|F|^{2})\, dx\Big\}
=-\int_{\R^{3}}(\mu|\nabla u|^{2}+\nu |\nabla F|^{2})\,dx,\ \ t>0.
\end{equation}
\item[(ii)] If $\nabla\cdot F=0$ at $t=0$, then $\nabla\cdot F=0$ for all $t>0$. This follows by taking divergence of
\eqref{E1.2+}$_2$ and applying \eqref{E1.2+}$_3$, which yields (see also \cite{H} and \cite{LL}):
$$
\partial_t(\nabla\cdot F)+(u\cdot\nabla)(\nabla\cdot F)=\mu\Delta (\nabla\cdot F).
$$
Here $(\nabla\cdot F)^{j} =\nabla_{i}F_{ij}$ denotes the divergence of $j^{th}$-column vector of $F$.
\item[(iii)] Under the condition $\nabla\cdot F=0$, it follows that 
$$
(\nabla\cdot FF^{t})^i=\partial_{j}F_{jk}F_{ik}+F_{jk}\partial_{j}F_{ik}=((F_{k}\cdot\nabla) F_{k})^i,
$$
where $F_k$ denotes the $k^{th}$-column vector of $F$, so that \eqref{E1.2+}
is closely related to the incompressible MHD system.
\end{itemize}

Similar to the Navier-Stokes equation,  
\eqref{E1.2+} is invariant under both translations and scalings: For any $z_0=(x_0,t_0)
\in\R^3\times (0,\infty)$ and $\lambda>0$,  if $(u,p, F)$ is a solution of \eqref{E1.2+},  then $
\big(u_{z_0,\lambda}, p_{z_0, \lambda}, F_{z_0,\lambda}\big)$ is also a solution of \eqref{E1.2+}, where
\[\begin{cases}
u_{z_0,\lambda}(x,t):=\lambda u(x_0+\lambda x,t_0+\lambda^2 t),\\
p_{z_0,\lambda}(x,t):=\lambda^2 p(x_0+\lambda x, t_0+\lambda^2 t),\\
F_{z_0, \lambda}(x,t):=\lambda F(x_0+\lambda x, t_0+\lambda^2 t).
\end{cases} \]
A solution $(u, p, F)$ to \eqref{E1.2+} is called to be self-similar,
if $$u=u_{(0,0), \lambda},\ p=p_{(0,0),\lambda}, \ F=F_{(0,0), \lambda},
\ \mbox{for every}\ \lambda>0.$$
In this case, the value of $(u, p, F)$ is determined by that at time $t=1$:
\begin{equation}\label{NE1.3}
u(x,t)=\frac{1}{\sqrt{t}}U(\frac{x}{\sqrt{t}}), \ p(x,t)=\frac{1}{t}P(\frac{x}{\sqrt{t}}),
\ F(x,t)=\frac{1}{\sqrt{t}}G(\frac{x}{\sqrt{t}}),
\end{equation}
where $(U,P, G)(y)=(u, p, F)(y,1)$. 

By simple calculations, the profile $(U, P, G)$ of a self-similar solution satisfies in $\R^3$:
\begin{align}\label{E1.3}
\left\{
\begin{array}{llll}
-\Delta U-\frac{1}{2}U-\frac{1}{2}(x\cdot\nabla) U+(U\cdot\nabla) U-(G_{k}\cdot\nabla)G_{k}+\nabla P=0,\\\\
{\rm{div}} U=0,\ {\rm{div}} G=0\\\\
-\Delta G_{j}-\frac{1}{2}G_{j}-\frac{1}{2}(x\cdot \nabla) G_{j}+(U\cdot\nabla)G_{j}-(G_{j}\cdot\nabla)U=0, \ j=1, 2, 3.
\end{array}
\right.
\end{align}
Here $G_j$ stands for the $j^{\rm{th}}$-column of $G$.

The initial condition of \eqref{E1.2+} is given by
 \begin{equation} \label{IC1.1}
 u(x,0)=u_{0}(x),\ F(x,0)=F_{0}(x), \ x\in\R^3,
 \end{equation}
for some given functions $u_0:\R^3\to \R^3$, with $\nabla\cdot u_0=0$, and $F_0:\R^3\to\R^{3\times 3}$,
with $\nabla\cdot F_0=0$.

When $(u, F)$ is self-similar, then $(u_{0},F_{0})$ must be homogeneous of degree $-1$, i.e., 
$$ (u_0(x), F_0(x))=\frac{1}{|x|}\Big(u_0\big(\frac{x}{|x|}\big), F_0\big(\frac{x}{|x|}\big)\Big), \ x\in\R^3\setminus\{0\}.$$
Thus it is natural to assume
\begin{equation}\label{E1.4}
|u_{0}(x)|+|F_{0}(x)|\leq\frac{C_{*}}{|x|},\ \  x\in \R^{3}\setminus\{0\},
\end{equation}
for some constant $C_{*}>0$ and look for self-similar solutions ($u, F$) satisfying
$$
|u(x,t)|+|F(x,t)|\leq\frac{C(C_{*})}{|x|}, \  x\in \R^{3}\setminus\{0\}, \ \  \mbox{or}\ \ \ \|u\|_{L^{3,\infty}}+\|F\|_{L^{3,\infty}}\leq C(C_{*}).
$$
Here $L^{q,r}, 1\leq q, r\leq \infty$, denotes the ($q,r$)-Lorentz space.

Concerning the Navier-Stokes equation, the existence and uniqueness of self-similar solutions with sufficiently small $C_{*}$ were established via the contraction mapping argument, see \cite{C1,C2} for details. For large $C_{*}$, Jia and \v{S}ver\'{a}k in their recent important work \cite{JS} have constructed a self-similar solution for every
$u_0$ that is homogeneous of degree $-1$ and locally H\"{o}lder continuous. The crucial ingredients in \cite{JS} are the
 ``local-in space near the initial time'' regularity techniques that ensure priori estimates of self-similar solutions.  Based on it, they showed the existence of global self-similar solutions to the Navier-Stokes equation by Leray-Schauder's degree theorem (see \cite{M}). 
The result of \cite{JS} was subsequently extended by Tsai \cite{T} and Bradshaw-Tsai \cite{BT} to forward discretely self-similar solutions, and Korobkow-Tsai \cite{KT} to forward self-similar solutions in the half space.

In this paper, we aim to extend the ideas by \cite{JS} on the Navier-Stokes equation to establish the existence of self-similar solutions of \eqref{E1.2+} for initial data ($u_{0}, F_{0}$) satisfying \eqref{E1.4} with large $C_{*}$. Our main theorem is

\begin{Theorem}\label{T1.1}
Let $u_{0}, F_{0}\in C_{loc}^{\gamma}\big(\R^{3}\setminus \{0\}\big)$ for some $\gamma\in (0,1)$, 
with $\mbox{div}\ u_{0}=0$ and $\mbox{div}\ F_0=0$,  be homogeneous of degree $-1$. 
Assume (\ref{E1.4}) holds, then the system (\ref{E1.2+}) and (\ref{IC1.1}) has at least one 
self-similar solution $(u,p, F)$ that is smooth in $\R^3\times(0,\infty)$  and locally H\"{o}lder continuous 
in $\R^3\times[0,\infty)\backslash\{(0,0)\}.$
\end{Theorem}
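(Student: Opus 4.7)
The plan is to adapt the Jia--\v{S}ver\'ak strategy from the Navier--Stokes setting to the coupled system \eqref{E1.2+}. First, I would split off the linear part: let $\bar u(x,t):=e^{\mu t\Delta}u_0(x)$ and $\bar F(x,t):=e^{\nu t\Delta}F_0(x)$. Because $u_0,F_0$ are homogeneous of degree $-1$ and locally H\"older off the origin, the heat-kernel representation gives that $\bar u,\bar F$ are themselves self-similar of degree $-1$, smooth for $t>0$, H\"older continuous on $\R^3\times[0,\infty)\setminus\{(0,0)\}$, and satisfy $|\bar u|+|\bar F|\lesssim (|x|^{2}+t)^{-1/2}$. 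Writing $u=\bar u+v$, $F=\bar F+H$, the pair $(v,H)$ solves a perturbation system whose nonlinearity is quadratic in $(v,H)$ but only linear at infinity in $(\bar u,\bar F)$, which is the key decay gain. Translating into the profile variables $y=x/\sqrt t$ via \eqref{NE1.3} turns the search into a nonlinear elliptic problem for the profiles $(V,H_{*})$ of $(v,H)$ on $\R^3$.

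Second, I would recast the profile problem as a fixed-point equation for a compact map $\Phi$ on a suitable Banach space (for instance, a weighted $L^q$ space with $q>3$, or the energy space defined by $\|\nabla V\|_{L^2}+\|\nabla H_{*}\|_{L^2}$ supplemented by the $L^2$ bound coming from the self-similar drift $-\tfrac12(y\cdot\nabla)$), where $\Phi(V,H_{*})$ is obtained by solving the linear (Stokes-type) system with the nonlinear couplings frozen at $(V,H_{*})$. Then consider the homotopy $\tau\Phi$, $\tau\in[0,1]$; at $\tau=0$ the Leray--Schauder degree on any ball is $1$, so the existence of a nontrivial fixed point at $\tau=1$ follows once one has a uniform a priori bound on all fixed points of $\tau\Phi$ preventing them from touching the boundary of a large ball. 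Standard parabolic regularity, combined with the H\"older continuity of the data away from the origin, then upgrades any such solution to the smoothness claimed.

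The hard part, and the entire heart of the argument, is the a priori estimate, which will proceed in the local-in-space, near-the-initial-time spirit of \cite{JS}. The required ingredients are: (i) a local energy inequality for suitable weak solutions $(v,H)$ of the perturbation system, in which the damping term $\nu|\nabla H|^2$ plays the same role that $\mu|\nabla v|^2$ plays in Navier--Stokes and controls the MHD-type cross term $(G_k\cdot\nabla)G_k$ arising from $\nabla\cdot FF^t$; (ii) an $\varepsilon$-regularity theorem asserting that smallness of the scale-invariant local $L^3$ norms of $(u,F)$ on a parabolic cylinder implies H\"older regularity there, analogous to the Caffarelli--Kohn--Nirenberg theorem; and (iii) a local-in-space decay estimate near $t=0$ exploiting the pointwise bound on $(\bar u,\bar F)$ and the fact that $u_0,F_0$ are bounded away from $\{0\}$, to rule out concentration of $(v,H)$ at the origin. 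The self-similar scaling then propagates this local control into a uniform global bound on the profiles, closing the degree argument. The principal technical novelty compared to \cite{JS} is handling the coupling with $F$ in both the local energy inequality and the $\varepsilon$-regularity, for which the full diffusion $\nu\Delta F$ (absent in \eqref{E1.1+}) is indispensable.
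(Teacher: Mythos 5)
Your proposal follows essentially the same strategy as the paper: split off the caloric extension $(e^{t\Delta}u_0, e^{t\Delta}F_0)$, reformulate the perturbation $(v,H)$ as a fixed point of a compact operator built from the Stokes/heat semigroup, invoke Leray--Schauder, and obtain the a priori bound from a local-in-space, near-initial-time regularity estimate that is in turn built on a local energy inequality and an $\varepsilon$-regularity theorem for the perturbed system. The identification of the damping term $\nu\Delta F$ as indispensable, and of the MHD-like structure coming from $\nabla\cdot FF^t$ once $\nabla\cdot F=0$, also matches the paper.

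One point deserves more care than you give it: you assert that the map $\Phi$ will be compact on your chosen space, but this is not automatic and depends delicately on the function space. The paper's choice is the weighted $L^\infty$ space $X_\gamma$ with norm $\sup_x\langle x\rangle^{1+\gamma}|u(x)|$, $0<\gamma<1$. The reason compactness holds there is a concrete decay gain: if $(\widehat v,\widehat H)\in X_\gamma^4$, the quadratic source term decays like $\langle x\rangle^{-2}$ (in fact better), and the Oseen-kernel estimate (Lemma~\ref{C3.1}) then produces a solution whose profile lies in $X_1^4$, i.e.\ with decay exponent improved from $1+\gamma$ to $2$; since $X_1\hookrightarrow X_\gamma$ compactly for $\gamma<1$, the operator is compact. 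Your first candidate (weighted $L^q$, $q>3$) could likely be made to work with a parallel decay-gain argument, but your second candidate (an $\dot H^1$-type energy space over all of $\R^3$) is problematic: the embedding $\dot H^1(\R^3)\hookrightarrow L^6(\R^3)$ is not compact, and the self-similar drift term $-\tfrac12(y\cdot\nabla)$ does not obviously supply the missing tightness at infinity. So if you go the energy-space route you would still need to prove a decay estimate on fixed points to recover compactness, which brings you back to the weighted $L^\infty$ framework. A second, smaller gap: the a priori bound on fixed points must control the profile both near the origin and near infinity; your local-in-space estimate handles $|y|$ large (after self-similar rescaling), but near $y=0$ the paper separately invokes the stationary elliptic system satisfied by the profile $(\mathcal U,\mathcal F)$ together with a Caccioppoli-type bound to get interior smooth estimates, and you should make this step explicit.
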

Since the values of $\mu$ and $\nu$ in \eqref{E1.2+} don't play any role in this paper,  we will assume throughout
this paper that $$\mu=\nu=1.$$
We take a slightly different approach from \cite{JS} to prove Theorem \ref{T1.1} by applying
the Leray-Schauder fixed point theorem (see \cite{G} Theorem 11.6). To this end, we introduce
a family of viscoelastic Navier-Stokes equations with damping as follows. For $0\le \sigma\le 1$,
consider
$$
\left\{
\begin{aligned}
\partial_{t}u-\Delta u+\nabla p=-\sigma[\nabla\cdot(u\otimes u)-\nabla\cdot FF^{t}],\\
\partial_{t}F-\Delta F=\sigma[\nabla u F-(u\cdot\nabla)F],\\
\nabla\cdot u=0,\ \nabla\cdot F=0\\
\end{aligned}\ \right. \qquad\qquad \ \ \mbox{in}\ \ \R^{3}\times (0,\infty)
\eqno{(1.3)_{\sigma}}
$$
It is clear that (1.3)$_\sigma$ reduces to the Stokes equation for $u$ and the heat equation for $F$ when $\sigma=0$,
and becomes \eqref{E1.1+} when $\sigma=1$. 
We will translate  the problem $(1.3)_{\sigma}$ and \eqref{IC1.1} into the fixed point problem:
$$
w=T(w,\sigma): X\to X,\ \ \  0\leq\sigma\leq1,
$$
where $X$ is a suitable Banach space, and $w=(v, H)$ is defined through
$$
u(x,t)=(e^{t\Delta}u_{0})(x)+v(x,t);\ \ F(x,t)=(e^{t\Delta}F_{0})(x)+H(x,t),
$$
with ($u, F$) and $(v, H)$ being self-similar solutions of (1.3)$_\sigma$ and (\ref{E3.9}) respectively.
To prove $T(\cdot,1)$ has a fixed point $w\in X$, we will verify that\\
(i) $T: X\times[0,1]\to X$ is a  compact operator and $T(w,0)=0$ for all $w\in X$.\\
(ii) There exists a constant $C$ such that
$$
\|v\|_{X}\leq C,
$$
for all $(v,\sigma)\in X\times[0,1]$ satisfying $v=T(v,\sigma)$. 

The main part of this paper is to verify the second condition. To achieve this, we will extend the ``local-in space near the initial time'' regularity technique, first developed by  \cite{JS} on the Navier-Stokes equation, to the system $(1.3)_{\sigma}$ or more precisely
\eqref{E2.1} below.  We will first prove $(v, H)$ is H\"{o}lder continuous under some smallness condition (the so called $\epsilon$-regularity criteria), then use this ``$\epsilon$- regularity" theorem to obtain the local in space near the
initial time smoothness for the so called local Leray weak solutions, from which we can establish the priori estimate of $(v, H)$. Finally, by the Leray-Schauder's fixed point theorem, we obtain the existence of global
forward self-similar solutions to \eqref{E1.2+}.

\bigskip
\noindent{\bf Notations.}  For a better presentation,  we list some notations here. 
\begin{align*}
&z=(x,t), \ z_{0}=(x_{0},t_{0}),\ \ \ \ \ \ \ \ \ \ \ \ \ \ \ \ \ \ \ \ \ \  \B_r(x_0)=\Big\{\ x\in R^3:\  |x-x_0|<r\ \Big\},\\
&Q_r(z_0)=\B_r(x_0)\times(t_0-r^2,t_0),\ \ \ \ \ \ \ \ \ \ \ \  \aint_{Q_r(z_0)} f \,dz=\frac{1}{|Q_r(z_0)|}\int_{Q_r(z_0)} f\,dz,\\
&\big(u\big)_{z_0,r}=\aint_{Q_r(z_0)}u dz,\ \ \ \ \ \ \ \ \ \ \  \ \ \ \ \ \ \ \ \ \ \ \ \ \ \  \ \ \big(p\big)_{x_0,r}(t)=\aint_{B_r(x_0)}p(x,t)dx,\\
& \B_r=\B_r(0), \ \ \ \ Q_r=Q_r(0),
\ \ \ \ \ \ \ \ \ \ \ \ \ \ \ \ \ \B=\B_{1}, \ \ \ \ \ \ \ \
 Q=Q_1, \\ & (u)_{r}=(u)_{0,r}, \ \ (p)_{r}(t)=(p)_{0, r}(t),\ \ \ \ \ \ \ \  \ {\bf H}=\Big\{\ u\in L^{2}(\R^3,\R^3):\  {\rm{div}} u=0\ \Big\},\\
&{\bf V}=\Big\{\ u\in H^{1}(\R^3,\R^3): \ {\rm{div}} u=0\ \Big\}, \ \ \ \ \ \ \ v\otimes w=(v^iw^j)_{1\le i, j\le 3},\\
&A \cdot B=A_{ij}B_{ij}, \ |A|=\sqrt{A \cdot A}, \ Au=(A_{ij}u^{j})\in\R^{3}, \ \mbox{for}\ A, B\in\mathbb R^{3\times 3}
\ \mbox{and}\ u\in\R^3,\\
& \nabla u=\frac{\partial u}{\partial x_{j}}=(u_{ij})\ \mbox{for}\ u:\R^3\to\R^3.
\end{align*}

\setcounter{section}{1}
\setcounter{equation}{0}
\section{$\epsilon$-regularity}
Our goal in this section is to prove an $\epsilon$-regularity criteria, similar to that by Caffarelli-Kohn-Nirenberg on the Navier-Stokes equation \cite{C},  for a family of generalized viscoelastic Navier-Stokes equations with damping.
First we will introduce the perturbations of (1.3)$_\sigma$ as follows.  For any $\sigma\in [0,1]$, consider
\begin{align}\label{E2.1}
\left\{
\begin{array}{llll}
\partial_{t}v-\Delta v+\nabla p=-\sigma\big[v\cdot\nabla a+a\cdot\nabla v+v\cdot\nabla v\big]
+\sigma\nabla\cdot(HH^t+MH^t+HM^t),\\
\ \ \ \ \ \ \ \partial_{t} H-\Delta H=-\sigma\big[a\cdot\nabla H+v\cdot\nabla H+v\cdot\nabla M\big]
+\sigma\big[\nabla a H+\nabla v H+\nabla v M\big],\\
\ \ \ \ \ \ \ \ \ \ \ \ \ \ \ \ \mbox{div} v=0, \ \mbox{div} H=0, \\
\end{array}
\right.
\end{align}
in $Q$, where $a\in L^m(Q,\R^3)$, with $\nabla\cdot a=0$,
and $M\in L^{m}(Q,\R^{3\times 3})$, with $\nabla\cdot M=0$, for $m>5$ 
are given\footnote{For applications in later sections, 
($a,M$) will be chosen to be a mild solution of (1.3)$_\sigma$ under suitable initial data.}. 

We will study the regularity of suitable weak solutions of the system \eqref{E2.1} under a smallness condition.
For this, we will first recall the definition of weak solutions of the system \eqref{E2.1}, which
is consistent with the notion introduced by \cite{C} and \cite{JS} on the Navier-Stokes equation.

\begin{Definition}
A triple of functions $(v, p, H):Q\to\R^3\times\R\times\R^{3\times 3}$ is called a suitable weak solution to 
the system \eqref{E2.1}, if the following statements hold:
\begin{itemize}
\item[(a)]
$v, H\in L_{t}^{\infty}L_{x}^{2}(O)\cap L_{t}^{2}{\bf V}(O)\ \mbox{for any}\ O\subset Q,\  p\in L_{loc}^{\frac{3}{2}}(Q),$
and $(v, H)\in  L_{loc}^{3}(Q)$.
\item[(b)] $(v, H, p)$ satisfies the system \eqref{E2.1} on $Q$ in the sense of distributions.
\item[(c)] $(v,H, p)$ satisfies the following local energy inequality
\begin{align}\label{E2.2+}
\nonumber&\frac{d}{dt}\int_{\B}\phi\big(|v|^2+|H|^{2}\big)+2\int_{\B}\phi\big(|\nabla v|^{2}+|\nabla H|^{2}\big)\\
\nonumber&\leq\int_{\B}(\phi_{t}+\Delta\phi)(|v|^{2}+|H|^{2})-2\sigma v\otimes\nabla \phi\cdot HH^{t}\\
&+\int_{\B}\big[\sigma(|v|^{2}+|H|^{2})(v+a)+2pv\big]\cdot \nabla \phi\nonumber\\
\nonumber&+2\sigma\int_{\B}\big(a\otimes v-MH^t- HM^t\big)\cdot\big(\nabla v \phi+v\otimes\nabla \phi\big)\\
&+2\sigma\sum\limits_{i,j,k=1}^{3}\int_{\B}\big(v_kM_{ij}-a_iH_{kj}-v_iM_{kj}\big)\cdot \big(\nabla_k H_{ij}\phi+H_{ij}\nabla_k \phi\big),
\end{align}
for any $\phi\in C_{0}^{\infty}(\B\times (-1,t])$, with $\phi\geq0$.
\end{itemize}
\end{Definition}

Following the scheme in \cite{C}, it is not difficult to prove the existence of {\em suitable weak solutions} to the system
\eqref{E2.1} under suitable initial and boundary conditions. Note also that by the local energy inequality above and the known
multiplicative inequalities, any suitable weak solution ($v, p, H$) to \eqref{E2.1} satisfies
$$
v, H\in W_{\frac{5}{4},loc}^{2,1}(Q),\ \mbox{and}\ \ p\in W_{\frac{5}{4},loc}^{1,0}(Q).
$$

Extending the arguments by \cite{ESS, LS, L, JS} on the Navier-Stokes equation, we will prove the main theorem of this section.

\begin{Theorem}\label{Th2.1}
Let $(v,H,p)$ be a suitable weak solution to the system \eqref{E2.1} in $Q$, with $a\in L^{m}(Q, \R^3), \mbox{div}\ a=0$,
and $M\in L^m(Q,\R^{3\times 3})$, $\mbox{div} M=0$,  for some $m>5$. Then there exist
$\epsilon_{0}=\epsilon_{0}(m)>0$ and $\alpha_0=\alpha_0(m)\in (0,1)$, independent of $0\le\sigma\le 1$, such that if
\begin{align}\label{Cond2.1}
\Big(\aint_{Q}|v|^{3}\Big)^{\frac{1}{3}}+\Big(\aint_{Q}|H|^{3}\Big)^{\frac{1}{3}}+\Big(\aint_{Q}|p|^{\frac{3}{2}}\Big)^{\frac{2}{3}}
+\Big(\aint_{Q}|a|^{m}\Big)^{\frac{1}{m}}+\Big(\aint_{Q}|M|^{m}\Big)^{\frac{1}{m}}\leq \epsilon_{0},
\end{align}
then $(v,H)$ is H\"{o}lder continuous in $Q_{\frac{1}{2}}$ with exponent $\alpha_0$, and
\begin{equation}\label{Conc2.2}
\big\|v\big\|_{C_{par}^{\alpha_0}(Q_{\frac{1}{2}})}+\big\|H\big\|_{C_{par}^{\alpha_0}(Q_{\frac{1}{2}})}\leq C(m,\epsilon_{0}).
\end{equation}
\end{Theorem}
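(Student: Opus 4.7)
The plan is to adapt the Caffarelli-Kohn-Nirenberg style $\epsilon$-regularity iteration, as streamlined in \cite{LS, L, JS} for the Navier-Stokes equation, to the coupled system \eqref{E2.1}. I would track scale-invariant energy and pressure quantities along a geometric sequence of parabolic cylinders and prove a one-step decay inequality under the smallness hypothesis \eqref{Cond2.1}. Concretely, for $Q_r\subset Q$ centered at the origin I introduce
$$A(r)=\frac{1}{r}\sup_{-r^2<t<0}\int_{\B_r}(|v|^2+|H|^2),\qquad E(r)=\frac{1}{r}\int_{Q_r}(|\nabla v|^2+|\nabla H|^2),$$
$$C(r)=\frac{1}{r^2}\int_{Q_r}(|v|^3+|H|^3),\qquad D(r)=\frac{1}{r^2}\int_{Q_r}|p|^{3/2},$$
$$K(r)=r^{1-\frac{5}{m}}\bigl(\|a\|_{L^m(Q_r)}+\|M\|_{L^m(Q_r)}\bigr).$$
Each of these is invariant under the parabolic rescaling associated to \eqref{E2.1}. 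Since $m>5$, $K(r)$ carries a strictly positive power of $r$; this encodes the subcritical nature of $(a,M)$ in the scaling and is the crucial structural input from the hypothesis.

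The first step is to test the local energy inequality \eqref{E2.2+} with a smooth parabolic cutoff $\phi$ supported in $Q_r$ and equal to $1$ on $Q_{\theta r}$, and to estimate each pairing on the right-hand side by H\"older and Young. This yields a Caccioppoli-type bound of the schematic form
$$A(\theta r)+E(\theta r)\le C\theta^{-\kappa}\Bigl[C(r)+C(r)^{\frac{1}{3}}D(r)^{\frac{2}{3}}+\bigl(C(r)+A(r)^{\frac{1}{2}}E(r)^{\frac{1}{2}}\bigr)K(r)+K(r)^{2}\Bigr],$$
in which every occurrence of $a$ or $M$ is absorbed into $K(r)$ because H\"older against $L^m$ with $m>5$ leaves a positive power of $r$. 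Second, I would decompose the pressure on $\B_r$ as $p=p_1+p_2$, where $-\Delta p_1$ is the divergence of the right-hand side of \eqref{E2.1}$_1$ localized to $\B_r$ (with quadratic source $v\otimes v+v\otimes a+a\otimes v-HH^t-MH^t-HM^t$) and $p_2$ is harmonic in $\B_r$. Calder\'on-Zygmund controls $\|p_1\|_{L^{3/2}(Q_r)}$ by $C(r)+C(r)^{\frac{1}{2}}K(r)+K(r)^{2}$, while the mean-value property furnishes the decay $(\theta r)^{-2}\int_{Q_{\theta r}}|p_2|^{3/2}\le C\theta\,D(r)$. Combined with the Ladyzhenskaya-type interpolation $C(r)\le C(A(r)+E(r))^{\frac{3}{2}}$, these estimates yield a master Morrey-type decay
$$\Phi(\theta r)\le C\theta\,\Phi(r)+C\theta^{-N}\Phi(r)^{\frac{3}{2}}+C\theta^{-N}K(r)^{\beta},$$
for $\Phi:=C+D$ and some $\beta>0$. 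Choosing $\theta$ small, then $\epsilon_0$ small in \eqref{Cond2.1}, the iteration $r_k=\theta^k$ produces the geometric decay $\Phi(r)\le Cr^{2\alpha_0}$ with $\alpha_0=\alpha_0(m)\in(0,1)$. Translating the base point gives the same Morrey decay on every sub-cylinder of $Q_{\frac{1}{2}}$, and a parabolic Campanato characterization upgrades it to the quantitative parabolic H\"older bound \eqref{Conc2.2}. Uniformity in $\sigma\in[0,1]$ is automatic since $\sigma$ enters only as a bounded multiplier in front of each nonlinear term.

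The principal obstacle is the coupled vector-tensor structure, aggravated by the term $\nabla a\,H$ in the $H$-equation and the new pressure sources $HH^t$, $MH^t$, $HM^t$. Since $\nabla a$ admits no pointwise control, every estimate in which a derivative lands on $a$ must be removed by integration by parts, transferring the derivative to $(v,H)$ or to the cutoff, after which the small factor is extracted from the $L^m$ norm of $(a,M)$ via H\"older with the subcritical exponent $m>5$. Careful book-keeping of all such nonlinear pairings, and verifying that no scale-invariant contribution of $(a,M)$ survives without a positive power of $r$, is the technical heart of the proof.
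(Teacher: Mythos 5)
Your proposal follows the direct Caffarelli--Kohn--Nirenberg iteration rather than the compactness/blow-up route the paper actually takes: the paper's Lemma \ref{L2.2} proves one-step decay of the scale-invariant \emph{oscillation} quantity $Y(v,H,p,Q_\theta)$ by contradiction, rescaling a putative non-decaying sequence, passing to the limit via Aubin--Lions, identifying the limit with a solution of a \emph{linear} Stokes--heat system \eqref{E2.8.1}, and importing the linear regularity of Lemma \ref{L2.1}; Lemma \ref{L2.3} then iterates that decay, and Campanato's characterization gives \eqref{Conc2.2}. Your scheme instead tries to close the loop by direct estimates on $A,E,C,D,K$. These two routes are different in substance, and both are legitimate in principle, but your version as written has a gap at the decisive step.

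The gap is the ``master Morrey-type decay''
$\Phi(\theta r)\le C\theta\,\Phi(r)+C\theta^{-N}\Phi(r)^{3/2}+C\theta^{-N}K(r)^{\beta}$
with $\Phi=C+D$ (not an oscillation). Your own chain of estimates does not produce the linear term $C\theta\,\Phi(r)$. Harmonicity of $p_2$ yields the factor $\theta$ \emph{for $D$ only}; for $C(r)=r^{-2}\int_{Q_r}(|v|^3+|H|^3)$ there is no such interior decay mechanism, because $C$ measures the size of $(v,H)$, not its oscillation. Plugging the Caccioppoli bound $A(\theta r)+E(\theta r)\le C\theta^{-\kappa}[\dots]$ into the interpolation $C(\theta r)\le C(A(\theta r)+E(\theta r))^{3/2}$ produces only a superlinear term with a \emph{negative} power of $\theta$ in front, $C\theta^{-3\kappa/2}\Phi(r)^{3/2}$. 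What the CKN iteration genuinely delivers is the \emph{uniform boundedness} $\sup_{0<r<1/2}\Phi(r)\le C\epsilon^{2/3}$ at every base point, not a geometric decay; that gives $(v,H)\in L^\infty(Q_{1/2})$, not H\"older continuity. Relatedly, the proposed ``parabolic Campanato characterization'' cannot be invoked for $\Phi=C+D$: the Campanato condition for H\"older continuity with exponent $\alpha$ would require $\int_{Q_r}|v-(v)_r|^3\lesssim r^{5+3\alpha}$, i.e.\ an oscillation bound at a much finer scale than $\int_{Q_r}|v|^3\lesssim r^{2+2\alpha_0}$, which is what your $\Phi$-decay asserts (and which is strictly weaker than boundedness). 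To repair the argument you must either (a) replace $C,D$ by the oscillation quantities the paper builds into $Y$, and then you will find that the linear decay factor must be extracted from regularity of a \emph{limiting} linear Stokes--heat system, which is precisely the compactness step you were trying to avoid and requires an analogue of Lemma \ref{L2.1}; or (b) stop at the CKN conclusion $(v,H)\in L^\infty$ and then prove H\"older continuity by a separate bootstrap for bounded suitable weak solutions, again using the parabolic linear theory of Lemma \ref{L2.1}. Either way, the linear regularity lemma is an indispensable ingredient that your sketch omits.

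Two smaller points. The bound for $\|p_1\|_{L^{3/2}(Q_r)}$ by Calder\'on--Zygmund has the correct form, and the $\theta$-gain for $p_2$ from the mean value inequality is correctly scaled; these parts of your scheme align with what is done inside the paper's Lemma \ref{L2.2} (where $q_i=q_i^1+q_i^2$). Your handling of the $\nabla a\,H$ term by integration by parts, absorbing derivatives into the test function, is sound and matches how the energy inequality \eqref{E2.2+} is formulated; the subcritical factor $r^{1-5/m}$ from $m>5$ is also used in the same way as in \eqref{Conc2.6}. The uniformity in $\sigma\in[0,1]$ is indeed automatic.
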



For the next lemma, we introduce the quantity
\begin{align*}
&Y\big(v,H,p, Q_{R}(z_{0})\big):=\Big(\aint_{Q_R(z_0)}|v-(v)_{z_0,R}|^{3}\Big)^{\frac{1}{3}}
+\Big(\aint_{Q_R({z_{0}})}|H-(H)_{z_{0},R}|^{3}\Big)^{\frac{1}{3}}\\
&\qquad\qquad\qquad\ \ \ \ \ \ \ \ +R\Big(\aint_{Q_R(z_{0})}|p-(p)_{x_{0},R}(t)|^{\frac{3}{2}}\Big)^{\frac{2}{3}}.
\end{align*}
\vskip0.2in

\begin{Lemma}\label{L2.2}
For $m>5$, suppose that $\mbox{div}\ a=0$,  $\mbox{div}\ M=0$, and
$$
\|a\|_{L^{m}(Q)}+\|M\|_{L^{m}(Q)}\leq \eta;\ \ |(v)_{1}|+|(H)_{1}|\leq L,
$$
for some small absolute number $\eta>0$ and some positive number $L$. Then for
 any $\theta\in (0,\frac{1}{3})$, there exist an $\epsilon=\epsilon(\theta,L,m)>0, C=C(L,m)>0$, and $\alpha=\alpha(m)>0$ such that if ($v, H, p$) is a suitable weak solution to \eqref{E2.1} with $0\le \sigma\le 1$, satisfying
\begin{equation}\label{Cond2.3}
Y(v,H,p, Q)+\Big\{|(v)_{1}|+|(H)_{1}|\Big\}\Big\{\aint_{Q}\Big(|a|^{m}+|M|^{m}\Big)\Big\}^{\frac{1}{m}}<\epsilon,
\end{equation}
then
\begin{align}\label{Conc2.4}
Y(v,H,p, Q_{\theta})\leq C\theta^{\alpha}\Bigg\{Y(v,H,p, Q)+\Big[|(v)_{1}|+|(H)_{1}|\Big]\Big[\aint_{Q}|a|^{m}+|M|^{m}\Big]^{\frac{1}{m}}\Bigg\}.
\end{align}
\end{Lemma}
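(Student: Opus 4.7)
The plan is a Campanato-type linearization. Decompose $(v,H,p)=(\tilde v,\tilde H,\tilde p)+(w,K,q)$ on $Q_{1/2}$, where $(\tilde v,\tilde p)$ solves the homogeneous Stokes system and $\tilde H$ the homogeneous heat equation, with boundary and initial data matching $(v,H,p)$ (the pressure pinned down by subtracting a function of time, which drops out after taking spatial averages in the $Y$-quantity). Interior regularity for these linear equations, together with a Taylor expansion of the smooth $\tilde v,\tilde H$ and of $\nabla\tilde p$, yields the clean decay
$$Y(\tilde v,\tilde H,\tilde p,Q_\theta)\le C\,\theta\,Y(v,H,p,Q),\qquad 0<\theta<1/4.$$

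The remainder $(w,K,q)$ satisfies the same linear Stokes-heat system with zero boundary/initial data and RHS equal to the collection of nonlinear and coupling terms in \eqref{E2.1}. The key preparatory step is to recast every forcing term into divergence form, using $\mathrm{div}\,a=\mathrm{div}\,M=0$ and the trivial identity $\mathrm{div}((v)_1\otimes(v)_1)=0$; for instance,
\begin{align*}
v\cdot\nabla a+a\cdot\nabla v &=\mathrm{div}(v\otimes a+a\otimes v),\\
v\cdot\nabla v &=\mathrm{div}\big((v-(v)_1)\otimes v+(v)_1\otimes(v-(v)_1)\big),\\
\mathrm{div}(HH^t) &=\mathrm{div}\big((H-(H)_1)H^t+(H)_1(H-(H)_1)^t\big),
\end{align*}
with analogous rewritings for $MH^t$, $HM^t$, and each forcing in the $H$-equation. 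Standard $L^r$ estimates for the Stokes and heat operators with divergence-form RHS, at an exponent $r\in(3/2,m)$ obtained by Hölder-interpolating $v,H\in L^3$ against $a,M\in L^m$, then give
$$\|w\|_{L^3(Q_{1/2})}+\|K\|_{L^3(Q_{1/2})}+\|q\|_{L^{3/2}(Q_{1/2})}\le C\,\|\text{forcing}\|_{L^r(Q)}.$$
Writing
$$\mathcal E:=Y(v,H,p,Q)+(|(v)_1|+|(H)_1|)\Big(\aint_Q(|a|^m+|M|^m)\Big)^{1/m},$$
Hölder bounds the forcing by $C(\epsilon+\eta)\mathcal E$: the quadratic pieces $v\cdot\nabla v$, $HH^t$, $v\cdot\nabla H$, $\nabla v\,H$, $\nabla v\,M$, etc. contribute $Y\cdot\mathcal E\le\epsilon\mathcal E$, while the pieces linear in $(v-(v)_1)$ or $(H-(H)_1)$ paired with $a$ or $M$ contribute $\eta\mathcal E$. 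The weight $|(v)_1|+|(H)_1|$ enters $\mathcal E$ precisely to absorb the irreducible pure-average terms $(v)_1\otimes a$ and $(H)_1\otimes M$ where the oscillation of $v,H$ cannot be invoked.

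Combining with the elementary rescaling $Y(w,K,q,Q_\theta)\le C\theta^{-N}(\|w\|_{L^3}+\|K\|_{L^3}+\|q\|_{L^{3/2}})$ for a fixed $N$ (from converting integrals on $Q$ to averages on $Q_\theta$), we obtain
$$Y(v,H,p,Q_\theta)\le C\theta\,Y(v,H,p,Q)+C\theta^{-N}(\epsilon+\eta)\,\mathcal E.$$
For any chosen $\alpha\in(0,1)$, fix $\theta$ small so that $C\theta\le\tfrac12\theta^\alpha$, then take $\epsilon,\eta$ small relative to $\theta$ to absorb the second term; this yields \eqref{Conc2.4} with exponent $\alpha=\alpha(m)$, the precise value of $\alpha$ governed by the gap between $3$ and $m$ via the $L^r$ interpolation. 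The principal technical obstacle is the combinatorial bookkeeping: every quadratic forcing must be split so that the pure-average piece is annihilated by divergence and the average$\times$oscillation piece is estimated by $(|(v)_1|+|(H)_1|)\cdot Y$, which is exactly what forces $|(v)_1|,|(H)_1|$ to appear as separate weights in the statement. A secondary subtlety is that the pressure is controlled only in $L^{3/2}$, so the decomposition must work with $p-(p)_{x_0,R}(t)$ throughout and place the temporal mean of $p$ inside $\tilde p$ so that the Stokes estimates for $(w,q)$ close in the correct norms.
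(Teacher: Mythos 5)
Your plan is a direct Campanato linearization, which is a genuinely different route from the paper's proof: the paper argues by contradiction (a compactness/blow-up scheme in the style of Lin and of Jia--\v{S}ver\'ak), normalizes a putative sequence of bad solutions by the small quantity $\epsilon_i$, passes to a limit, and then invokes the a priori H\"older regularity of the limiting linear system (Lemma~\ref{L2.1}) to derive the decay and contradict the assumed failure of \eqref{Conc2.4}. Your approach, by contrast, tries to obtain the decay directly by decomposing $(v,H,p)$ into a boundary-matched homogeneous part and a remainder driven by the reorganized divergence-form forcing; this is the Lin-style direct proof of CKN-type lemmas. Both are legitimate, and the direct route has the advantage of producing explicit exponents without extracting subsequences, while the compactness route avoids delicate quantitative tracking through the linear estimates.

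However, as written your argument has a gap that it cannot absorb. You bound the remainder $(w,K,q)$ only in $L^3\times L^3\times L^{3/2}$ on $Q_{1/2}$ and then convert to the $Y$-quantity on $Q_\theta$ via the ``elementary rescaling,'' which necessarily introduces a negative power $\theta^{-N}$. Your concluding step --- ``fix $\theta$ small so that $C\theta\leq\tfrac12\theta^\alpha$, then take $\epsilon,\eta$ small relative to $\theta$'' --- does not close the argument, for two reasons. First, $\eta$ in the Lemma is a fixed absolute constant, chosen once and for all before $\theta$ (in the paper it is fixed so that the energy iteration in (2.9) absorbs, $C\eta<1/16$), so it is not available to be taken small relative to $\theta$; the order of quantifiers forbids it. Second, the Lemma's conclusion must hold for all $\theta\in(0,1/3)$ with $\alpha$ and $C$ independent of $\theta$, whereas your inequality $Y(Q_\theta)\le C\theta\,Y+C\theta^{-N}(\epsilon+\eta)\mathcal E$ degenerates as $\theta\to0$ because of the $\theta^{-N}\eta\mathcal E$ piece. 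To repair this you need the remainder estimate to carry a positive power of $\theta$ in its own right: observe that the forcing pieces involving $a,M$ or their pure-average products lie in $L^r$ with $r$ close to $m>5$, so the corresponding piece of $(w,K,q)$ is parabolically H\"older with exponent $1-5/m$, giving oscillation decay $\theta^{1-5/m}$ rather than $\theta^{-N}$; the pressure must likewise be split into a Newtonian-potential part (controlled in $L^m$ on $Q_{1/2}$, contributing $\theta^{1-5/m}$) and a harmonic part (contributing $\theta^{2/3}$), exactly as in the paper's estimate \eqref{E2.8.3}. Only the genuinely quadratic piece $(v-(v)_1)\otimes v$ should be handled with an $\epsilon$-smallness absorption, since it lies only in $L^{3/2}$. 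Without this upgrade your sketch does not yield \eqref{Conc2.4} with the required uniform $\alpha$ and $C$.
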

\begin{proof}\ Suppose the lemma were false. Then there exist $\sigma_i\in [0,1]$ and a sequence of suitable weak
solutions  $(v_{i}, p_{i}, H^{i} )$ of \eqref{E2.1}, with $\sigma=\sigma_i$, $a=a_{i}$, and $M=M^{i}$, such that
\begin{align*}
&\|a_{i}\|_{L^{m}(Q)}+\|M^{i}\|_{L^{m}(Q)}\leq \delta,\ \ |(v_{i})_{1}|+|(H^{i})|_{1}\leq L,\ \ \nabla\cdot a_{i}=0,\\
&\epsilon_i\equiv Y(v_{i},H^{i},p_{i}, Q)+\Big\{|(v_{i})_{1}|+|(H^{i})_{1}|\Big\}\Big\{\aint_{Q}|a_{i}|^{m}+\aint_{Q}|M^{i}|^{m}\Big\}^{\frac{1}{m}}\xrightarrow{i\to\infty}0,\\
& Y(v_{i},H^{i},p_{i}, Q_{\theta})> C(L,m)\theta^{\alpha}\epsilon_{i}.
\end{align*}
Set
\begin{align*}
&{u}_{i}=\frac{v_{i}-(v_{i})_{1}}{\epsilon_{i}},\ \ \ {q}_{i}=\frac{p_{i}-(p_{i})_{1}(t)}{\epsilon_{i}}, \ \
{F}^{i}=\frac{H^{i}-(H^{i})_{1}}{\epsilon_{i}}, \ f^{i}=\frac{(v_{i})_{1}\otimes a_i}{\epsilon_{i}},\\
& g^{i}=\frac{M^{i}(H^i)_1^t+(H^i)_1(M^i)^t}{\epsilon_{i}}, \ \ \ \ \ \ \ \ h^{i}_j=\frac{(v_{i})_1\otimes M_j^i}{\epsilon_{i}},
\ \  l^i_j=\frac{a_i\otimes (H_j^i)_1}{\epsilon_i} (j=1,2,3).
\end{align*}
Then it is straightforward to verify that (${u}_{i}, {q}_{i}, F^{i}$) solves the system:
\begin{align}\label{E2.5}
\left\{
\begin{array}{llll}
\partial_{t}{u}_{i}-\Delta {u}_{i}+\nabla q_{i}=-\sigma_i\big[u_i\nabla a_i+(a_i+(v_i)_1+\epsilon_i u_i)\nabla u_i
+{\rm{div}}(f^i)\big]\\
\qquad\qquad\ \ \ \ \ \ +\sigma_i\nabla\cdot\big[\epsilon_i F^i(F^i)^t+(M^i+(H^i)_1)(F^i)^t+F^i(M^i+(H^i)_1)^t+g^i\big],\\
\partial_{t}F^{i}-\Delta F^{i}=-\sigma_i\big[(a_i+(v_i)_1+\epsilon_{i}u_i)\nabla F^i+u_i\nabla M^i+\sum_{j=1}^3\nabla_j(h_j^i)\big]\\
\qquad\qquad\quad\ \ +\sigma_i\big[\nabla u_i((H^i)_1+\epsilon_i F^i)+\nabla a_i F^i+\nabla u_i M^i +\sum_{j=1}^3\nabla_j(l_j^i)\big],\\
\mbox{div}u_{i}=\mbox{div}a_i=0, \ \mbox{div} F^i=\mbox{div} M^i=0.
\end{array}
\right.
\end{align}
Moreover, by direct calculations, we have that
\begin{align}\label{E2.5.1}
&\Big\{\aint_{Q}|u_{i}|^{3}\Big\}^{\frac{1}{3}}+\Big\{\aint_{Q}|F^{i}|^{3}\Big\}^{\frac{1}{3}}
+\Big\{\aint_{Q}|q_{i}|^{\frac{3}{2}}\Big\}^{\frac{2}{3}}+\Big\{\aint_{Q}|f^{i}|^{m}\Big\}^{\frac{1}{m}}\nonumber\\
&+\Big\{\aint_{Q}|g^{i}|^{m}\Big\}^{\frac{1}{m}}
+\sum_{j=1}^3\Big\{\aint_{Q}|h_j^{i}|^{m}\Big\}^{\frac{1}{m}}
+\sum_{j=1}^3\Big\{\aint_{Q}|l_j^{i}|^{m}\Big\}^{\frac{1}{m}}\le 1,
\end{align}
and
\begin{equation}\label{E2.5.2}
Y({u}_{i}, {F}^{i},q_{i}, Q_{\theta})> C(L,m)\theta^{\alpha}.
\end{equation}
Since ($v_i, p_i, H^i$) is a suitable weak solution of \eqref{E2.1}, it follows that
$(u_i, q_i, F^i)$ is also a suitable weak solution of \eqref{E2.5}. Hence 
the following local energy inequality holds:
\begin{align}\label{E2.6}
&\frac{d}{dt}\int_{\B}\phi\Big(|u_{i}|^{2}+|F^{i}|^{2}\Big)
+2\int_{\B}\phi\Big(|\nabla u_i|^{2}+|\nabla F^{i}|^{2}\Big)\leq\nonumber\\
\nonumber&\int_{\B}\bigg\{(|u_{i}|^{2}+|F^{i}|^{2})\big[(\phi_{t}+\Delta\phi)
+\sigma_i (a_i+(v_i)_1+\epsilon_i u_{i})\nabla\phi\big]
-(2\sigma_i\epsilon_{i}F^i(F^i)^t-q_i){u}_{i}\nabla \phi\\ 
&
+2\sigma_i\big[u_{i}a_{i}+(M^{i}+(H^i)_1)(F^i)^t
+F^i(M^i+(H^i)_1)^t+f^i-g^i\big]\cdot\nabla (u_{i}\phi)\\
\nonumber&+2\sigma_i\big[a_i F^i+u_i((H^i)_1+\epsilon_i F^i)+u_i M^i-M^i u_i\big]\cdot\nabla (F^i\phi)
+2\sigma_i\sum_{j=1}^3(h^i_j-l^i_j)\nabla_j(F^i\phi)
\bigg\},
\end{align}
for any $\phi\in C_{0}^{\infty}(\B_{1}(0)\times (-1,t])$, with $\phi\geq 0$.

We define
\begin{align*}
\displaystyle E_{i}(r)={\rm{esssup}}_{-r^{2}<t\leq0}\frac{1}{2}\int_{\B_{r}}\Big(|u_{i}|^2+|{F}^{i}|^{2}\Big)
+\int_{-r^{2}}^{0}\int_{\B_{r}}\Big(|\nabla {u}_{i}|^{2}+|\nabla {F}^{i}|^{2}\Big), \ 0\le r\le 1.
\end{align*}
By the known interpolation inequalities we have
\begin{equation}\label{E2.6.1}
\big\|{u}_{i}\big\|_{L^{\frac{10}{3}}(Q_{r})}+\big\|{F}^{i}\big\|_{L^{\frac{10}{3}}(Q_{r})}\leq CE^{\frac{1}{2}}_{i}(r).
\end{equation}
For any $\frac{1}{2}<r_{1}<r_{2}\leq 1$, by choosing suitable test functions $\phi\in C^\infty_0(Q_{r_2})$ in
the local energy inequality \eqref{E2.6}, applying the uniform bounds \eqref{E2.5.1} and H\"{o}lder's inequalities,
we have  that
\begin{align}\label{E2.7.1}
E_i(r_1)&\leq\nonumber\frac{C}{(r_{2}-r_{1})^{2}}
+\frac{C}{r_2-r_1}\int_{Q_{r_2}}\Big\{(|u_i|^2+|F^i|^2)(L+|a_i|+|M^i|+\epsilon_i|u_i|)+\nonumber\\
&\qquad\qquad\qquad\qquad\qquad\ \ \ \ \ \ (|f^i|+|g^i|+|q_i|)|u_i|+(|h^i|+|l^i|)|F^i|\Big\}\nonumber\\
&\quad+C\int_{Q_{r_2}}\Big\{\big[|a_i||u_i|+(L+|M^i|)|F^i|+|f^i|+|g^i|\big]|\nabla u_i|+\nonumber\\
&\qquad\qquad\ \ \ \ \ \ \big[(|a_i|+\epsilon_i|u_i|)|F^i|+(L+|M^i|)|u_i|+|h^i|+|l^i|\big]|\nabla F^i|\Big\}\nonumber\\
&\le \frac{C(L)}{(r_2-r_1)^2}+C\big\|(|f^i|+|g^i|+|h^i|+|l^i|)\big\|_{L^2(Q_{r_2})}E_{i}(r_2)^\frac12\nonumber\\
&\quad+C\big\|(|a_i|+|M^i)|\big\|_{L^5(Q_{r_2})}\big\|(|u_i|+|F^i|)\big\|_{L^{\frac{10}3}(Q_{r_2})}
\big\|(|\nabla u_i|+|\nabla F^i|)\big\|_{L^2(Q_{r_2})}\nonumber\\
&\le  \frac{C(L)}{(r_2-r_1)^2}+\left[C\big(\|a_{i}\|_{L^{m}(Q)}+\|M^i\|_{L^{m}(Q)}\big)+\frac{1}{8}\right]E_{i}(r_{2}).
\end{align}
If we choose $\eta>0$ so that $C\eta<\frac{1}{16}$, then a standard iteration argument shows that 
\begin{equation}\label{E2.7.2}
E_{i}(\frac{3}{4})\leq C(L,m,\eta).
\end{equation}
Indeed,  it follows from \eqref{E2.7.1} that there exists $0<\theta<\frac14$ such that
$$
E_{i}(r_{1})\leq \theta E_{i}(r_{2})+\frac{C(L)}{(r_{2}-r_{1})^{2}}, \ \forall\  \frac{1}{2}<r_{1}<r_{2}\leq 1.
$$
By taking $r_{2}=1, r_{1}=\frac{3}{4}$ and $\rho_{k}=r_{2}-2^{-k}(r_{2}-r_{1})$ for $k\ge 0$, we have 
by iterations that
$$
E_{i}(\rho_{0})\leq \theta^{k}E_{i}(\rho_{k})+\frac{C(L,m, \eta)}{1-4\theta},
$$
this implies (\ref{E2.7.2}) by sending $k\to\infty$.
Observe that by \eqref{E2.5}, we have
that 
\begin{equation}\label{E2.7.3}
\sup_{i\ge 1} \big\|(\partial_{t} u_{i}, \partial_t F^i)\big\|_{L^{\frac{3}{2}}\big((-\frac{9}{16},0), W^{-1,3}(\B_{\frac{3}{4}})\big)}
\le C.
\end{equation}
Thus, by Aubin-Lions' compactness lemma, we may assume that after passing to a subsequence, 
\begin{align*}
\left\{
\begin{array}{llll}
\sigma_i\rightarrow\sigma\in [0,1];\ \ u_{i}\to u,\  F^{i}\to F \  \mbox{in}\ \ L^{p}(Q_{\frac{3}{4}}); 
\ \ q_{i}\rightharpoonup q \ \  \mbox{in}\ L^{\frac{3}{2}}(Q_{\frac{3}{4}}),\\
(f^{i}, g^i)\rightharpoonup (f,g),\ \ (h^i_j, l^i_j)\rightharpoonup (h_j, l_j), j=1, 2,3,  \ \mbox{in}\ L^{m}(Q_{\frac{3}{4}},\R^{3\times 3}),\\
(v_{i})_{1}\to \lambda\in\R^3,\ (H^{i})_{1}\to \mu\in \R^{3\times 3}, \ \ (a_i, M^i)\rightharpoonup (a, M)
\ \mbox{in}\ L^{m}(Q_{\frac{3}{4}}).
\end{array}
\right.
\end{align*}
for $1\leq p<\frac{10}{3}$.  Passing to the limit in \eqref{E2.5}, we see 
that $(u,p, F)$ satisfies the generalized linear Stokes equations in $Q_\frac34$:
\begin{align}\label{E2.8.1}
\left\{
\begin{array}{llll}
\partial_{t}{u}-\Delta {u}+\nabla q=-\sigma\big[u\nabla a+(a+\lambda)\nabla u
+{\rm{div}} f\big]\\
\qquad\qquad\ \ \ \ \ \ \ \ \ \ \ +\sigma\nabla\cdot\big[(M+\mu)F^t+F(M+\mu)^t+g\big],\\
\partial_{t}F-\Delta F=-\sigma\big[(a+\lambda)\nabla F+u\nabla M+\nabla_j h_j\big]\\
\qquad\qquad\quad +\sigma\big[\nabla u\mu+\nabla a F+\nabla u M +\nabla_j l_j\big],\\
\mbox{div}u={\rm{div}}a=0, \ \mbox{div} F=\mbox{div} M=0.
\end{array}
\right.
\end{align}
By Lemma \ref{L2.1} below, we know that there exists $0<\beta=\beta(m)<1$ such that
$$
|u(x_{1},t_{1})-u(x_{2},t_{2})|+|F(x_1, t_1)-F(x_2,t_2)|
\leq C\big(|x_{1}-x_{2}|+|t_{1}-t_{2}|^{\frac{1}{2}}\big)^{\beta}, $$
for all $(x_1, t_1), \ (x_2, t_2)\in Q_{\frac34}$.

Since $(u_{i}, F^{i}) \to (u, F)\  \mbox{in}\ L^{3}(Q_{\frac{3}{4}})$, it follows  that
\begin{equation}\label{E2.8.1.1}
\Big(\aint_{Q_{\theta}}|u_{i}-({u}_{i})_{\theta}|^{3}\Big)^{\frac{1}{3}}+
\Big(\aint_{Q_{\theta}}|F^{i}-(F^{i})_{\theta}|^{3}\Big)^{\frac{1}{3}}\leq C(L,m,\eta)\theta^{\beta},
\end{equation}
for $i$ sufficiently large.

Taking divergence of \eqref{E2.5}$_1$ and using div$u_i=0$, we see that $q_i$ satisfies
\begin{eqnarray}\label{E2.8.2}
&\Delta q_i=-\sigma_i{\rm{div}}^2\big[u_i\otimes a_i+(a_i+(v_i)_1+\epsilon_i u_i)\otimes u_i
+f^i\big]\\
&\qquad\ \ \ \ \ \ \ \ \qquad\ \ \ \ \ \ \ \ \ +\sigma_i{\rm{div}}^2\big[\epsilon_i F^i(F^i)^t+(M^i+(H^i)_1)(F^i)^t
+F^i(M^i+(H^i)_1)^t+g^i\big], \nonumber
\end{eqnarray}
in $Q_\frac34$.

Write $q_{i}=q_{i}^{1}+q_{i}^{2}$, where $q_i^1$ is defined by
\begin{align*}
&q_{i}^{1}=\sigma_i(-\Delta)^{-1}\mbox{div}^2\Bigg\{
\big[u_i\otimes a_i+(a_i+(v_i)_1+\epsilon_i u_i)\otimes u_i
+f^i\big]\ \chi_{\B_\frac34}\\
&\qquad-\big[\epsilon_i F^i(F^i)^t+(M^i+(H^i)_1)(F^i)^t
+F^i(M^i+(H^i)_1)^t+g^i\big]\ \chi_{\B_\frac34}\Bigg\}\ \ {\rm{in}}\ \ Q_\frac34,
\end{align*}
where $\chi_{\B_\frac34}$ is the characteristic function of $\B_\frac34$.
Then it is easy to check that
$$\Delta q_{i}^{2}=0\ \ {\rm{in}}\ \ Q_{\frac34}.$$
Since $(u_i, {F}^{i})\to (u, F)$ in $L^{3}(Q_{\frac34})$, we have that
$$
\big\|q_i^1- q^1\big\|_{L^{\frac32}(Q_{\frac34})}\xrightarrow{i\rightarrow\infty} 0,
$$
where
$$
q^{1}=\sigma(-\Delta)^{-1}\mbox{div}^2\Big\{
\big[u\otimes a+(a+\lambda)\otimes u+f
-(M+\mu)F^t-F(M+\mu)^t-g\big] \chi_{\B_\frac34}\Big\}.
$$
It follows from Lemma \ref{L2.1} that $u$ and $F$ are bounded in $Q_{\frac12}$. Hence
by Calderon-Zygmund's $L^m$-estimate we have that
$$
\big\|q^1\big\|_{L^{m}(Q_{\frac12})}\leq C\left(\|a\|_{L^{m}(Q_{\frac12})}+\|M\|_{L^{m}(Q_{\frac12})}+L
+\|f\|_{L^m(Q_\frac12)}+\|g\|_{L^m(Q_\frac12)}\right),
$$
which, by H\"older's inequality, yields 
$$
\theta\Big(\aint_{Q_{\theta}}|q^{1}|^{\frac{3}{2}}\Big)^{\frac{2}{3}}
\leq C\theta\Big(\aint_{Q_{\theta}}|q^{1}|^{m}\Big)^{\frac{1}{m}}\leq C(L,m,\eta)\theta^{1-\frac{5}{m}}.
$$
Therefore, for $i$ sufficiently large, we have
$$
\theta\Big(\aint_{Q_{\theta}}|q_i^{1}|^{\frac{3}{2}}\Big)^{\frac{2}{3}}\leq C(L,m,\eta)\theta^{1-\frac{5}{m}}.
$$
Since $q_i^2(t)$ is harmonic in $\B_\frac34$ for all $-\frac9{16}\le t\le 0$, the standard estimate
implies that
$$\theta\Big(\aint_{Q_{\theta}}|q_i^{2}-(q_i^2)_\theta(t)|^{\frac{3}{2}}\Big)^{\frac{2}{3}}
\le C\theta^\frac23\Big(\aint_{Q_\frac34}|q^2_i|^\frac32\Big)^\frac23\le C\theta^\frac23.
$$
Putting these estimates together, we obtain
\begin{align}\label{E2.8.3}
\theta\left(\aint_{Q_{\theta}}|q_i-(q_i)_{\theta}(t)|^{\frac{3}{2}}\right)^{\frac{2}{3}}&\leq
\theta\left(\aint_{Q_{\theta}}|q_{i}^{1}|^{\frac{3}{2}}\right)^{\frac{2}{3}}+
\theta\left(\aint_{Q_{\theta}}|q_i^{2}-(q_i^{2})_{\theta}(t)|^{\frac{3}{2}}\right)^{\frac{2}{3}}\nonumber\\
&\leq C(L,m,\eta)\theta^{\min\{\frac23, 1-\frac5{m}\}},
\end{align}
for $i$ sufficiently large. This, together with \eqref{E2.8.1.1}, shows that
$$
Y(u_{i}, F^{i},q_{i}, Q_{\theta})\leq  C(L,m,\eta)\theta^{\min\{\beta,\frac23, 1-\frac5m\}},
$$
for $i$ sufficiently large, which contradicts to \eqref {E2.5.2} if $\alpha\in (0,\frac12)$ is chosen to be sufficiently small. 
This completes the proof.
\end{proof}
Now we need to prove the uniform regularity of the following non-homogeneous generalized Stokes systems
\eqref{E2.8.1} for $0\le\sigma\le 1$.
\begin{Lemma}\label{L2.1} For any $m>5$, let $a\in L^m(Q_1,\R^3)$, with ${\rm{div}}a=0$,
and $M, f, g, h_j, l_j\in L^m(Q_1,\R^{3\times 3})$ for $j=1,2, 3$, with $\mbox{div} M=0$, $\lambda\in\R^3, \mu\in \R^{3\times 3}$, be such that
\begin{equation}\label{E2.2.1}
\big\||a|+|M|+|f|+|g|+\sum_{j=1}^3(|h_j|+| l_j)|\big\|_{L^m(Q_1)}+|\lambda|+|\mu|\le L,
\end{equation}
for some positive $L>0$.  
Let $(u, F)\in L_{t}^{\infty}L_{x}^{2}(Q_1)\cap L_{t}^{2}{\bf V}(Q_1)$ and $p\in L^{\frac{3}{2}}(Q_1)$ satisfy
\begin{equation}\label{E2.2.2}
\big(\int_{Q_1}|u|^{3}\big)^\frac13+\big(\int_{Q_1}|F|^{3}\big)^{\frac{1}{3}}
+\big(\int_{Q_1}|p|^{\frac{3}{2}}\big)^{\frac{2}{3}}\leq L,
\end{equation}
and solve the system \eqref{E2.8.1}, for some $\sigma\in [0,1]$, on $Q_1$ in the sense of distributions. 
Then $(u,F)$ is H\"{o}lder continuous in $Q_{\frac{1}{2}}$ with an
exponent $0<\beta=\beta(m, L)<1$, and
\begin{equation}\label{E2.2.3}
\big\|u\big\|_{C_{par}^{\beta}(Q_{\frac{1}{2}})}+\big \|F\big\|_{C_{par}^{\beta}(Q_{\frac{1}{2}})} \leq C(m,L).
\end{equation}
\end{Lemma}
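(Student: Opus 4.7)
The core algebraic observation is that the divergence-free conditions $\mbox{div}\,a = \mbox{div}\,u = \mbox{div}\,F = \mbox{div}\,M = 0$ allow every first-order drift on the right-hand side of \eqref{E2.8.1} to be written in pure divergence form. For example, $u\nabla a = \nabla\cdot(a\otimes u)$ and $(a+\lambda)\nabla u = \nabla\cdot(u\otimes(a+\lambda))$; likewise $\nabla a\,F = \nabla\cdot(a\otimes F)$ using $\mbox{div}\,F=0$, and $\nabla u\,M = \nabla\cdot(u\otimes M)$ using $\mbox{div}\,M=0$. Applying these identities term by term, \eqref{E2.8.1} reduces to
\begin{align*}
\partial_{t}u-\Delta u+\nabla q &= \sigma\,\nabla\cdot G_{1},\qquad \mbox{div}\,u=0,\\
\partial_{t}F-\Delta F &= \sigma\,\nabla\cdot G_{2},
\end{align*}
where $G_{1}$ and $G_{2}$ are tensor fields affine in $(u,F)$, with coefficients drawn from $\{a,M,\lambda,\mu\}$ and inhomogeneities $\{f,g,h_{j},l_{j}\}$, all bounded in $L^{m}$ by $L$.

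The plan is then to bootstrap the integrability of $(u,F)$ using linear parabolic $L^{p}$ theory on this divergence-form system. Starting from $(u,F)\in L^{\infty}_{t}L^{2}_{x}\cap L^{2}_{t}{\bf V}$, the standard parabolic interpolation gives $(u,F)\in L^{10/3}(Q_{1})$. By H\"{o}lder's inequality, each bilinear term in $G_{1},G_{2}$ such as $u\otimes a$ or $F\otimes M$ sits in $L^{s_{0}}_{\rm loc}$ with $\tfrac{1}{s_{0}}=\tfrac{3}{10}+\tfrac{1}{m}<\tfrac{1}{2}$, so $s_{0}>2$; combined with the $L^{m}$ inhomogeneities, this places $G_{1},G_{2}\in L^{s_{0}}_{\rm loc}(Q_{1})$.

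Next I would apply maximal $L^{s}$ regularity for the non-stationary Stokes system and for the heat equation with divergence-form source, localized by cutoff functions on slightly shrinking cylinders. The pressure $q$ is handled exactly as in the proof of Lemma \ref{L2.2}: split $q=q^{1}+q^{2}$, where $q^{1}$ is the Riesz potential of $G_{1}\chi_{\B_{r}}$ (controlled in $L^{s_{0}}$ by Calder\'{o}n-Zygmund) and $q^{2}$ is spatially harmonic, hence smooth in the interior. The output is $\nabla u,\nabla F\in L^{s_{0}}_{\rm loc}$, and parabolic Sobolev/Gagliardo-Nirenberg embedding then lifts $(u,F)$ into $L^{s_{1}}_{\rm loc}$ with $s_{1}>s_{0}$, the gain depending only on $m>5$. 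Iterating finitely many times produces $(u,F)\in L^{p}_{\rm loc}$ for some $p$ large enough that the composite sources $\nabla\cdot G_{1},\nabla\cdot G_{2}$ sit in $L^{p'}_{\rm loc}$ with $p'>5$ (the parabolic Morrey threshold), which is possible precisely because $\tfrac{1}{p}+\tfrac{1}{m}<\tfrac{1}{5}$ is achievable when $m>5$.

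At that point, a standard Campanato-type decay estimate applied separately to the Stokes equation for $u$ (with pressure split as above) and to the heat equation for $F$ produces
\[
\int_{Q_{r}(z_{0})}\bigl(|u-(u)_{z_{0},r}|^{2}+|F-(F)_{z_{0},r}|^{2}\bigr)\,dz\;\le\;C(m,L)\,r^{5+2\beta}
\]
for every $z_{0}\in Q_{1/2}$ and sufficiently small $r$, with $\beta=\beta(m,L)\in(0,1)$; Campanato's embedding then yields \eqref{E2.2.3}. The main technical obstacle is the bootstrap itself: executing it cleanly on cylinders (rather than on $\R^{3}\times\R$) with cutoff-induced error terms, keeping constants uniform in $\sigma\in[0,1]$, propagating the pressure splitting through each iteration, and tracking the precise numerology of the gain in integrability so that the iteration genuinely terminates above the Morrey exponent after finitely many steps.
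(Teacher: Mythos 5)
Your proposal follows essentially the same route as the paper's proof of Lemma \ref{L2.1}: both exploit the divergence structure of the drift and coupling terms made available by the solenoidal conditions, split the pressure into a Calder\'{o}n--Zygmund part plus a spatially harmonic part, and bootstrap the integrability of $(u,F)$ with a per-step gain of $\tfrac{1}{5}-\tfrac{1}{m}>0$ in the reciprocal exponent, which is positive precisely because $m>5$. The paper implements the bootstrap step via an explicit Duhamel/Oseen-kernel decomposition $u=u_1+u_2+u_3$ (solving separately for the cutoff-localized divergence source $\mbox{div}(N\eta)$, the harmonic-pressure source $\nabla(p_2\eta)$, and a free caloric remainder) rather than quoting abstract maximal $L^s$ regularity plus a parabolic Sobolev embedding, but the numerology and the conclusion are identical.
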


\begin{proof} The proof, similar to \cite{JS} Lemma 2.2, is based on some standard bootstrapping arguments.
We will sketch it here. Set
$$
N:=u\otimes a+(a+\lambda)\otimes u+f-(M+\mu)F^t-F(M+\mu)^t-g.
$$
Then $N\in L^{\frac{mq}{m+q}}(Q_1)$ and 
$$\|N\|_{L^{\frac{mq}{m+q}}(Q_1)}\le C(L)\big\|(|u|+|F|)\big\|_{L^q(Q_1)}.$$
Taking divergence of \eqref{E2.8.1}$_1$ gives
$$-\Delta p=\sigma{\rm{div}}^2N\ \  {\rm{in}}\ Q_1.$$
For $R\in [\frac34,1]$, set
$$p_1=\sigma(-\Delta)^{-1}{\rm{div}}^2(N\chi_{\B_R}),$$
and $p_2=p-p_1$. Then 
$$\Delta p_2=0 \ \ {\rm{in}}\ \ \B_R.$$
Assume $(u, F)\in L^q(Q_1)$ for $q\ge 3$.  Direct calculations yield
$$\big\|p_1\big\|_{L^{\frac{mq}{m+q}}(Q_R)}\le C\big\|N\big\|_{L^{\frac{mq}{m+q}}(Q_R)}
\le C(L)\big\|(|u|+|F|)\big\|_{L^q(Q_1)},$$
and 
$$\big\|p_2\big\|_{L^\frac32_tC^2_x(Q_{(1-\delta)R})}\le C(\delta, L),$$
for any small $0<\delta<1$.

Let $\eta$ be a smooth cutoff function such that
$\eta=1$ in $Q_{(1-2\delta)R}$ and $\eta=0$ outside $Q_{(1-\delta)R}$.
Decompose $u$ by letting $u=u_{1}+u_{2}+u_{3}$, where 
\begin{align*}
&u_{1}(\cdot,t)=\int_{-\infty}^{t}e^{(t-s)\Delta}\big[-\nabla(p_{1}\eta)-\sigma\mbox{div}(N\eta)\big](\cdot, s)\,ds,\\
&u_{2}(\cdot,t)=-\int_{-\infty}^{t}e^{(t-s)\Delta}\nabla (p_{2}\eta)(\cdot,s)\,ds.
\end{align*}
By the standard estimates on heat kernel (see \cite{K}), we have that $u_1\in L^{\gamma}(Q_{(1-\delta)R})$
for any $\gamma\ge 1$ satisfying $\frac{1}{\gamma}>\frac1{q}+\frac1{m}-\frac15$, and
$$\big\|u_1\big\|_{L^\gamma(Q_{(1-\delta)R})}\le C\big\|(|p_1|+|N|)\big\|_{L^{\frac{mq}{m+q}}(Q_R)},$$
and 
$$\big\|u_2\big\|_{L^\infty(Q_{(1-\delta)R})}\le C\big\|p_2\big\|_{L^\frac32_tC^2_x(Q_{(1-\delta)R})}\le C(\delta, L).$$ 
Since $u_3$ solves the heat equation in $Q_{(1-2\delta)R}$, it follows that
$$\big\|u_3\big\|_{L^\infty(Q_{(1-3\delta)R})}\le C\Big(\big\|u\big\|_{L^q(Q_R)}
+\big\|u_1\big\|_{L^q(Q_R)}+\big\|u_2\big\|_{L^q(Q_R)}\Big)
\le C(\delta, L)\Big(1+\big\|(|u|+|F|)\big\|_{L^q(Q_R)}\Big).$$
Putting these estimates together, we obtain
$$\big\|u\big\|_{L^\gamma(Q_{(1-3\delta)R})}\le C(L,\delta)\Big(1+\big\|(|u|+|F|)\big\|_{L^q(Q_R)}\Big),$$
for $\gamma>1$ as long as $\frac{1}{\gamma}>\frac1{q}+\frac1{m}-\frac15$.

To estimate $F$. Decompose $F$ by letting $F=F_1+F_2$, where $F_1$ is defined
by
\begin{align*}
&F_1^{ij}(\cdot, t)=
-\sigma\int_{-\infty}^t e^{(t-s)\Delta}\nabla_k\Big[\big((a^k+\lambda^k) F^{ij}+u^kM^{ij}+h_k^{ij}\big)\eta\\
&\qquad\qquad\qquad\qquad -\big(u^i\mu^{kj}+a^iF^{kj}+u^iM^{kj}+l_k^{ij}\big)\eta\Big](\cdot, s)\,ds,
\ 1\le i, j\le 3.
\end{align*}
Thus $F_2$ satisfies the heat equation in $Q_{(1-2\delta)R}$. 

Similar to the estimate of $u_1$, we have that
$F_1\in L^{\gamma}(Q_{(1-\delta)R})$ for any $\gamma>1$ with $\frac{1}{\gamma}>\frac1{q}+\frac1{m}-\frac15$,
and
$$\big\|F_1\big\|_{L^\gamma(Q_{(1-\delta)R})}\le C(L)\Big(1+\big\|(|u|+|F|)\big\|_{L^{\frac{mq}{m+q}}(Q_R)}\Big).$$
Similar to the estimate of $u_3$, we have that $F^2\in L^\infty(Q_{(1-3\delta)R})$ and
$$\big\|F_2\big\|_{L^\infty(Q_{(1-3\delta)R})}\le C(L,\delta)\Big(1+\big\||u|+|F|\big\|_{L^q(Q_R)}\Big).$$
Combining these two estimates yields
$$\big\|F\big\|_{L^\gamma(Q_{(1-3\delta)R})}\le C(L,\delta)\Big(1+\big\|(|u|+|F|)\big\|_{L^q(Q_R)}\Big),$$
for $\gamma>1$ as long as $\frac{1}{\gamma}>\frac1{q}+\frac1{m}-\frac15$.

By repeating these arguments for finitely many times, it follows that 
$(u, F)\in L^{\gamma}(Q_{\frac34})$ for any finite $\gamma>3$. The interior H\"older continuity of
$(u,F)$, along with uniform estimate (\ref{E2.2.3}),  then follows from the standard estimates for the Stokes equations and heat equations. The completes the proof.
\end{proof}


By translation and dilation and iterations,  Lemma \ref{L2.1} implies

\begin{Lemma}\label{L2.3} 
Let $(u,p,F)$, $a, M$, $\eta, L$, $\epsilon(\theta, L,m)$, $C(L, m)$,  $\alpha=\alpha(m)$ be the same as in Lemma \ref{L2.1}.
Let $\beta=\frac{\alpha}{2}$ and $\theta_0\in (0,\frac12)$ be such that 
$C(L,m)\theta^{\alpha-\beta}\leq1$ for $\theta\le \theta_0$. Then there is $\epsilon_{1}=\epsilon_1(\theta_0, L, m)>0
$ 
such that if, for $Q_r(z_0)\subset Q_1$, 
\begin{align}\label{Cond2.5}
\left\{
\begin{array}{llll}
\displaystyle r|(u)_{z_0,r}|+r|(F)_{z_0,r}|<\frac{L}{2},\\
\displaystyle rY(u,p,F, Q_r(z_0))+rL\Big(\aint_{Q_r(z_0)}|a|^{m} +|M|^{m}\Big)^\frac{1}{m}\leq \epsilon_{1},
\end{array}
\right.
\end{align}
then for any $k\ge 0$ it holds that
\begin{align}\label{Conc2.6}
\left\{
\begin{array}{llll}
\displaystyle  r|(u)_{z_0, \theta^{k} r}|+r|(F)_{z_0, \theta^{k}r}|\leq L,\\
\displaystyle rY\big(u,p,F,Q_{\theta^{k}r}(z_0)\big)+r\theta^{k}\Big\{|(u)_{Q_{\theta^{k}}(z_0)}|+
|(F)_{Q_{\theta^{k}r}(z_0)}|\Big\}\Big\{\aint_{Q_{\theta^{k}r}(z_0)}|a|^{m} +|M|^{m}\Big\}^\frac{1}{m}\leq \epsilon_{1},\\
\displaystyle Y\big(u,p,F,Q_{\theta^{k+1}r}(z_0)\big)
\leq \theta^{\beta}\Big\{Y\big(u,p,F,Q_{\theta^{k}r}(z_0)\big)\\
+r\theta^{k}\Big(|(u)_{Q_{\theta^{k} r}(z_0)}|+
|(F)_{Q_{\theta^{k}r}(z_0)}|\Big)\Big(\aint_{Q_{\theta^{k}r}(z_0)}|a|^{m} +|M|^{m}\Big)^\frac{1}{m}\Big\}.
\end{array}
\right.
\end{align}
\end{Lemma}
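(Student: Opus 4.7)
The plan is to iterate Lemma \ref{L2.2} on the shrinking parabolic cylinders $Q_{r_k}(z_0)$, where $r_k:=\theta_0^k r$, via the standard Campanato rescaling
$$\tilde u(y,s):=r_k u(x_0+r_k y, t_0+r_k^2 s),\ \ \tilde F(y,s):=r_k F(x_0+r_k y, t_0+r_k^2 s),\ \ \tilde p(y,s):=r_k^2 p(x_0+r_k y, t_0+r_k^2 s),$$
and analogously for $a$ and $M$. The triple $(\tilde u,\tilde p,\tilde F)$ solves a system of the form \eqref{E2.1} on $Q_1$ with the same $\sigma$, and the scaling identities $Y(\tilde u,\tilde F,\tilde p,Q_\rho) = r_k Y(u,F,p,Q_{\rho r_k}(z_0))$, $|(\tilde u)_1|+|(\tilde F)_1| = r_k(|(u)_{z_0,r_k}|+|(F)_{z_0,r_k}|)$, and $\|\tilde a\|_{L^m(Q)}+\|\tilde M\|_{L^m(Q)} \sim r_k\bigl[(\aint_{Q_{r_k}(z_0)}|a|^m)^{1/m}+(\aint_{Q_{r_k}(z_0)}|M|^m)^{1/m}\bigr]$ match each quantity appearing in the hypothesis \eqref{Cond2.3} and conclusion \eqref{Conc2.4} of Lemma \ref{L2.2} with its counterpart in \eqref{Cond2.5}--\eqref{Conc2.6}.

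For the base case $k=0$, the hypothesis \eqref{Cond2.5} directly implies the smallness hypothesis of Lemma \ref{L2.2} for the rescaled triple once $\epsilon_1$ is small relative to $\eta$ and to $\epsilon(\theta_0,L,m)$. An application of Lemma \ref{L2.2} with $\theta=\theta_0$, un-rescaled and divided by $r_k$, and then combined with the defining inequality $C(L,m)\theta_0^{\alpha-\beta}\le 1$, produces exactly \eqref{Conc2.6}(c) at level $0$. Assuming inductively that (a), (b), (c) hold through level $k$, I would verify the hypotheses of Lemma \ref{L2.2} at scale $r_{k+1}$ using two elementary estimates: the geometric bound $r_{k+1}(\aint_{Q_{r_{k+1}}(z_0)}|a|^m+|M|^m)^{1/m} \le \theta_0^{(k+1)(1-5/m)} r(\aint_{Q_r(z_0)}|a|^m+|M|^m)^{1/m}$, valid since $m>5$, and the telescoping comparison of averages
$$|(u)_{z_0,r_{k+1}}-(u)_{z_0,r_k}| \le \theta_0^{-5/3}\Big(\aint_{Q_{r_k}(z_0)}|u-(u)_{z_0,r_k}|^3\Big)^{1/3} \le \theta_0^{-5/3} Y(u,F,p,Q_{r_k}(z_0)),$$
together with the analogous inequality for $F$. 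Summing these shifts geometrically, with $Y(u,F,p,Q_{r_k}(z_0))$ controlled by iterating (c), keeps $r(|(u)_{z_0,r_k}|+|(F)_{z_0,r_k}|)$ below $L$, yielding (a) at level $k+1$, while the $Y$-decay and drift-decay jointly close the smallness (b) at level $k+1$.

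The principal technical point is the coordination of two geometric decay rates: iterating (c), the $Y$-term decays like $\theta_0^{k\beta}$ with $\beta=\alpha/2$, while the drift contribution $r_k(|(u)_{z_0,r_k}|+|(F)_{z_0,r_k}|)(\aint_{Q_{r_k}(z_0)}|a|^m+|M|^m)^{1/m}$ decays like $\theta_0^{k(1-5/m)}$. Provided $\beta<1-5/m$ (ensurable by taking $\alpha$ small enough, using the freedom granted by Lemma \ref{L2.2}), the convolution sum $\sum_{j=0}^{k}\theta_0^{(k-j)\beta}\theta_0^{j(1-5/m)}$ is majorized by a constant multiple of $\theta_0^{k\beta}$. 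This is exactly where the assumption $m>5$ is indispensable; with the two rates properly aligned, $\epsilon_1=\epsilon_1(\theta_0,L,m)$ can be chosen independently of $k$ so that the induction closes uniformly, and \eqref{Conc2.6} follows for all $k\ge 0$.
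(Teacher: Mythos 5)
Your proposal is correct and follows essentially the same line of argument as the paper's proof: reduce by translation/dilation, verify the base case via Lemma~\ref{L2.2}, and close the induction by combining the telescoping comparison of averages with the geometric gain $\theta^{k(1-5/m)}$ on the $L^m$ averages of $a,M$, then reapply Lemma~\ref{L2.2} at the rescaled level. The only minor difference is in handling the two decay rates: you insist on $\beta<1-\frac{5}{m}$ (true here, since the proof of Lemma~\ref{L2.2} forces $\alpha<1-\frac{5}{m}$, hence $\beta=\alpha/2<1-\frac{5}{m}$) to get clean geometric decay of the convolution sum, whereas the paper works with $\beta_1=\min\{\beta,1-\frac{5}{m}\}$ and absorbs a harmless polynomial factor $(k+1)\theta^{(k+1)\beta_1}$; both routes close the induction with an $\epsilon_1$ independent of $k$.
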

\begin{proof} By translation and dilation, it suffices to show this Lemma for $z_0=0$ and $r=1$.
It is easy to see that the conclusion for $k=0$ follows from Lemma \ref{L2.2}. Observe that
by the triangle inequality we have
\begin{equation}\label{E2.8}
\big|(u)_{\theta^{k+1}}\big|+\big|(F)_{\theta^{k+1}}\big|\leq
\theta^{-\frac53}\sum\limits_{j=0}^{k} Y(u,p,F,Q_{\theta^{j}})+\big|(u)_{Q_1}\big|+\big|(F)_{Q_1}\big|.
\end{equation}
By induction, we may assume that (\ref{Conc2.6}) holds for $k=1,\cdot\cdot\cdot, k_0$.
Then we see that
\begin{align*}
 &Y(u,p,F,Q_{\theta^{k+1}})\leq \theta^{\beta}\Big\{Y(u,p,F,Q_{\theta^{k}})+
 \theta^{k}L\big(\aint_{Q_{\theta^k}}|a|^{m} +|M|^{m}\big)^\frac{1}{m}\Big\}\\&
 \leq\theta^{\beta}\Big\{Y(u,p,F,Q_{\theta^{k}})+
 \theta^{k(1-\frac{5}{m})}L\big(\aint_{Q_1}|a|^{m} +|M|^{m}\big)^\frac{1}{m}\Big\}\\
 &\leq \theta^{\beta} Y(u,p,F,Q_{\theta^k})+\theta^{(k+1)\beta_{1}}\epsilon_{1},
\end{align*}
holds for $k=1,\cdots, k_0$, where $\beta_{1}=\min\{\beta,1-\frac{5}{m}\}$. 
Iterations of the above inequalities yield that 
\begin{equation}\label{E2.9}
Y(u,p,F,Q_{\theta^{k+1}})\leq \theta^{(k+1)\beta}Y(u,p,F,Q_1)+(k+1)\theta^{(k+1)\beta_{1}}\epsilon_{1}
\end{equation}
holds for $k=1,\cdots, k_0$.

We now show that (\ref{Conc2.6}) holds for $k=k_0+1$. 
Combing \eqref{E2.8} with \eqref{E2.9}, we have
\begin{align*}
\big|(u)_{\theta^{k_0+1}}|+|(F)_{\theta^{k_0+1}}|
&\leq \theta^{-\frac{5}{3}}\sum\limits_{j=0}^{k_0}\big(\theta^{j\beta}\epsilon_{1}+
j\theta^{j\beta_{1}}\epsilon_{1}\big)+\frac{L}{2}\\&
\leq\theta^{-\frac{5}{3}}\epsilon_1\Big[\frac{1}{1-\theta^{\beta}}+C(\beta_1,\theta)\Big]+\frac{L}{2}\\&\leq L,
\end{align*}
provided $\epsilon_1(\theta, L,m)$ is chosen to be sufficiently small.

While for \eqref {Conc2.6}$_2$, we have that
\begin{align*}
&Y(u,p,F,Q_{\theta^{k_0+1}})+\theta^{k_0+1}\Big(|(u)_{\theta^{k_0+1}}|+
|(F)_{\theta^{k_0+1}}|\Big)\Big(\aint_{Q_{\theta^{k_0+1}}}|a|^{m} +|M|^{m}\Big)^\frac{1}{m}\\
&\leq \theta^{\beta}\epsilon_1+L\theta^{k_0+1}\Big(\aint_{Q_{\theta^{k_0+1}}}|a|^{m} +|M|^{m}\Big)^\frac{1}{m}\\
&\leq \theta^{\beta}\epsilon_1+\theta^{(1-\frac{5}{m})(k_0+1)}L\Big(\aint_{Q_1}|a|^{m} +|M|^{m}\Big)^\frac{1}{m}\\
&\leq \theta^{\beta}\epsilon_1+\theta^{(1-\frac{5}{m})(k_0+1)}\epsilon_1<\epsilon_1.
\end{align*}
Now we rescale $(u, p, F)$ by letting
$$\Big(v, q, H\Big)(x,t)=\Big(\theta^{k_0+1}u, \theta^{2(k_0+1)}p, \theta^{k_0+1}F\Big)
\big(\theta^{k_0+1}x, \theta^{2(k_0+1)}t\big),$$
and
$$
\Big(b, N \Big)(x,t)=\theta^{k_0+1}\Big(a, M\Big)
\big(\theta^{k_0+1}x, \theta^{2(k_0+1)}t\big),
$$
for $(x,t)\in Q_1$. Then $(v, q, H)$ is  a suitable weak solution of (\ref{E2.1}) on $Q_1$ for $\sigma\in [0,1]$, with
$a, M$ replaced by $b, N$. Moreover,
\begin{align*}
&Y(v,q,H, Q_1)+\Big\{|(v)_1|+|(H)_1|\Big\}\Big(\aint_{Q_1}|b|^m+|N|^m\Big)^\frac1m\\
&=\theta^{k_0+1}\Big(Y(u,p,F, Q_{\theta^{k_0+1}})+\theta^{k_0+1}
\Big\{|(u)_{\theta^{k_0+1}}|+|(H)_{\theta^{k_0+1}}|\Big\}\Big(\aint_{Q_{\theta^{k_0+1}}}|b|^m+|N|^m\Big)^\frac1m \Big)
\le\epsilon_1,
\end{align*}
$$\Big\{|(v)_1|+|(H)_1|\Big\}=\theta^{k_0+1}\Big\{|(u)_{\theta^{k_0+1}}|+|(F)_{\theta^{k_0+1}}|\Big\}\le L,$$
and
$$\big\|b\big\|_{L^m(Q_1)}+\big\|N\big\|_{L^m(Q_1)}=\theta^{(k_0+1)(1-\frac{5}m)}
\Big(\big\|a\big\|_{L^m(Q_{\theta^{k_0+1}})}+\big\|N\big\|_{L^m(Q_{\theta^{k_0+1}})}\Big)\le \delta.$$
Thus by Lemma \ref{L2.2} we get
$$Y(v,q, H,Q_{\theta})\le\theta^\beta\Big[
Y(v, q,H, Q_1)+\Big\{|(v)_{1}|+|(H)_1|\Big\}\Big\{\aint_{Q_1}|b|^m+|N|^m\Big\}^\frac1m\Big],$$
which, after rescaling, gives (\ref{E2.9})$_3$ for $k=k_0+1$.
The proof is completed.
\end{proof}


\noindent {\it Proof of Theorem \ref{Th2.1}.} By choosing $\epsilon_{0}$ sufficiently small, we can apply Lemma \ref{L2.3}
 in $Q_{\frac12}(z_0)$ for any $z_{0}\in Q_{\frac12}$ to conclude that for some $\beta\in (0,1)$ depending
 on $m, L$ such that 
\begin{align*}
\Big(\aint_{Q_\rho(z_0)}|u-(u)_{z_0,\rho}|^{3}\Big)^{\frac{1}{3}}
+\Big(\aint_{Q_\rho(z_0)}|F-(F)_{z_0, \rho}|^{3}\Big)^{\frac{1}{3}}
\leq Y(u,p,F,Q_\rho(z))\leq C(\theta, \beta)\rho^{\beta},
\end{align*}
 for any $\rho\leq\frac12$. By Campanato's Lemma, we conclude that
 $u,F$ are H\"{o}lder continuous in $Q_{\frac12}$ with the desired estimates.
 \qed

\setcounter{section}{2}
\setcounter{equation}{0}
\section{Existence of self-similar solutions}
\subsection{Local in space near initial time estimate of local weak solutions}

In this subsection, we will apply the ``$\epsilon$-regularity" Theorem  \ref{Th2.1}
to obtain the local in space near initial time estimate of local {\em Leray
weak solutions} of (1.3)$_\sigma$. To this end, we first introduce the definition of local {\em Leray
weak solutions} of the system (1.3)$_\sigma$, analogous to (\ref{E2.2+}) for the system 
\eqref{E2.1} (see \cite{JS,LR} on the Navier-Stokes
equation).
\begin{Definition}
For any $u_0\in L^2_{\rm{loc}}(\R^3,\R^3)$ with $\mbox{div} u_{0}=0$ and 
$F_0\in L^2_{\rm{loc}}(\R^3,\R^{3\times 3})$ with $\mbox{div} F =0$, a pair of functions
$u\in L^2_{\rm{loc}}(\R^3\times [0,\infty),\R^3)$ and $F\in L^2_{\rm{loc}}(\R^3\times [0,\infty),\R^{3\times 3})$
is called a local-Leray solution to the system (1.3)$_\sigma$ for  $\sigma\in [0,1]$, with initial data $u_{0}, F_{0}$,  if 
\begin{itemize}
\item[(i)] For any $0<R<\infty$, it holds
$$\mbox{esssup}_{0\leq t<R^{2}, x_{0}\in\R^{3}}\frac{1}{2}\int_{\B_{R}(x_{0})}(|u|^{2}+|F|^{2})+
\sup_{x_{0}\in\R^{3}}\int_{\B_{R}(x_{0})\times [0,R^2]}(|\nabla u|^{2}+|\nabla F|^{2})<\infty,$$ 
and
$$
\lim_{|x_{0}|\to\infty}\int_{\B_{R}(x_{0})\times [0,R^2]}(|u|^{2}+|F|^{2})=0.
$$
\item [(ii)] There exists  a distribution $p$ in $\R^{3}\times (0,\infty)$ such that $(u,F,p)$ satisfies (1.3)$_\sigma$
for $\sigma\in [0,1]$  in the sense of distributions, and, for any compact set
$K\subseteq \R^{3}$,
$$\big\|u(\cdot,t)-u_{0}\big\|_{L^{2}(K)}+\big\|F(\cdot,t)-F_{0}\big\|_{L^{2}(K)}\xrightarrow{t\rightarrow 0}0.$$

\item [(iii)] $(u,F)$ is suitable in the sense that the following local energy inequality holds (see also \cite{C}): 
\begin{align}
\label{NE3.1}
 \begin{split}
 &\int_{\R^{3}}\phi\big(|u|^{2}+|F|^{2}\big)+2\int_{\R^3\times [0, \infty]}\phi\big(|\nabla u|^{2}+|\nabla F|^{2}\big)\\
&\leq\int_{\R^\times [0, \infty]}\Bigg\{(\phi_{t}+\Delta \phi)\big(|u|^{2}+|F|^{2}\big)-2\sigma u\otimes \nabla\phi\cdot FF^{t}\\
&\qquad\qquad\ \ \ +\big[\sigma(|u|^{2}+|F|^{2})+2p\big]u\cdot\nabla\phi\Bigg\},
\end{split}
\end{align}
for any $0\le \phi\in C^\infty(\R^{3}\times (0,\infty))$ with supp $\phi\Subset\R^3\times (0,\infty)$. 
\end{itemize}
\end{Definition}
The set of all local Leray weak solutions to (1.3)$_\sigma$ with initial data $(u_{0},F_{0})$ will be denoted
as $\mathcal{N} (u_{0}, F_{0})$.

 The proof of the local existence of local-Leray weak solutions to the system (1.3)$_\sigma$ is standard.
 However, since we cannot find it in the literature, we sketch a proof, that is based on \cite{LR0, LR},
 for the reader's convenience.

\begin{Theorem} 
For $u_0\in L^2_{\rm{loc}}(\R^3,\R^3)$ with $\mbox{div} u_{0}=0$,
$F_0\in L^2_{\rm{loc}}(\R^3, \R^{3\times 3})$ with $\mbox{div} F_{0}=0$, 
if there exists $0<R<+\infty$ such that
$$\sup_{x_{0}\in \R^{3}}\int_{\B_{R}(x_{0})}\big(|u_{0}|^{2}+|F_{0}|^{2}\big)<\infty, \ \
\lim_{x_{0}\to\infty}\int_{\B_{R}(x_{0})}\big(|u_{0}|^{2}+|F_{0}|^{2}\big)=0,$$ 
then there exists $T_*=T_*(R)>0$ and  a local Leray solution $(u,F)$ of the system (1.3)$_\sigma$, for any $\sigma\in [0,1]$,
on $\R^{3}\times [0,T_*)$.
\end{Theorem}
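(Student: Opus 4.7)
The plan is to adapt the Lemari\'e--Rieusset construction \cite{LR0, LR} of local Leray solutions for the Navier--Stokes equation to the viscoelastic system (1.3)$_\sigma$. The scheme has three stages: (i) approximate by data for which global weak solutions are available, (ii) prove uniform local-in-space energy estimates with a definite lifespan $T_*(R)$, and (iii) pass to the limit and verify the three defining properties of a local Leray solution. For stage (i), mollify and truncate: pick $\chi_n\in C_c^\infty(\R^3)$ with $\chi_n\to 1$ and set $u_0^n=\mathbb P(\chi_n u_0)\ast\rho_{1/n}$, $F_0^n=(\chi_n F_0)\ast\rho_{1/n}$, where $\mathbb P$ is the Leray projector and $\rho$ a standard mollifier; these are smooth, divergence free, lie in $L^2(\R^3)$, and inherit the uniform local $L^2$ bound and the decay at infinity. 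For each fixed $n$, the global energy identity (the $\sigma$-analogue of \eqref{I1.2}) combined with a standard Galerkin scheme produces a global weak solution $(u^n,p^n,F^n)$ of (1.3)$_\sigma$ which, by the usual Caffarelli--Kohn--Nirenberg truncation argument, is suitable and satisfies the local energy inequality \eqref{NE3.1}.

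The core technical step is the uniform local estimate in stage (ii). Fix $R>0$, $x_0\in\R^3$, and a cutoff $\phi\in C_c^\infty(\B_{2R}(x_0))$ with $\phi\equiv 1$ on $\B_R(x_0)$. Testing \eqref{NE3.1} against $\phi^2$ and setting
$$E_n(t;x_0,R):={\rm{esssup}}_{0\le s\le t}\int_{\B_R(x_0)}(|u^n|^2+|F^n|^2)(s) + \int_0^t\!\!\int_{\B_R(x_0)}(|\nb u^n|^2+|\nb F^n|^2),$$
one bounds $E_n$ in terms of the cubic flux $(|u^n|^2+|F^n|^2)u^n\cdot\nb\phi^2$, the elastic cross flux $u^n\otimes\nb\phi^2\cdot F^n(F^n)^t$, and the pressure flux $p^n u^n\cdot\nb\phi^2$. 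The cubic terms are absorbed by Ladyzhenskaya interpolation, at the cost of a small fraction of the gradient. For the pressure, taking the divergence of the first equation of (1.3)$_\sigma$ gives $-\Delta p^n=\sigma\,\nb\cdot\nb\cdot(u^n\otimes u^n-F^n(F^n)^t)$; split $p^n=p^n_{\rm loc}+p^n_{\rm har}$, where $p^n_{\rm loc}$ is the Newtonian potential of the source restricted to $\B_{4R}(x_0)$, controlled in $L^{3/2}$ by Calder\'on--Zygmund in terms of the square of $(u^n,F^n)$ on that enlarged ball, while $p^n_{\rm har}$ is harmonic on $\B_{2R}(x_0)$ and is dominated pointwise by a geometric sum of $L^{3/2}$-pressure norms over translates of $\B_R$. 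Combining yields a Gronwall-type inequality from which one extracts a uniform lifespan $T_*=T_*(R)>0$ on which $\sup_{x_0\in\R^3}E_n(t;x_0,R)$ is bounded; the decay at spatial infinity propagates through the same pointwise bound on the harmonic pressure.

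For stage (iii), the uniform bounds above, together with the equation-based estimates $\p_t u^n,\p_t F^n\in L^{3/2}_t W^{-1,3/2}_{\rm loc}$, yield via Aubin--Lions strong $L^3_{\rm loc}$ convergence of $(u^n,F^n)$ and weak $L^{3/2}_{\rm loc}$ convergence of $p^n$ along a subsequence. This is precisely enough to pass to the distributional limit in all the nonlinear terms $(u\cdot\nb)u$, $\nb\cdot FF^t$, $\nb u\,F$, $(u\cdot\nb)F$, and to pass the local energy inequality \eqref{NE3.1} to the limit by weak lower semicontinuity of the gradient terms; initial data attainment in $L^2(K)$ for compact $K$ follows from the uniform energy bound together with the equation. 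The step I expect to be the main obstacle is the pressure decomposition for the elastic source $\nb\cdot\nb\cdot F(F)^t$: one must verify that this quadratic-in-$F$ contribution admits exactly the same Calder\'on--Zygmund/harmonic splitting as the Navier--Stokes term $\nb\cdot\nb\cdot(u\otimes u)$ and, crucially, produces constants uniform in $\sigma\in[0,1]$ so that the $\sigma=0$ endpoint (linear Stokes for $u$, heat for $F$) is recovered continuously. This $\sigma$-uniformity is essential for the later application of the Leray--Schauder fixed point theorem and is the bookkeeping-heavy but genuinely nontrivial part of the adaptation.
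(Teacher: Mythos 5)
Your proposal follows the same Lemari\'e--Rieusset scheme as the paper: approximate the data by smooth divergence-free fields with the same uniform $L^2_R$ bound, construct suitable weak solutions (the paper via Leray's procedure, you via Galerkin plus the CKN truncation, an inessential difference), obtain the uniform local-in-space energy and pressure estimates via the local/harmonic pressure splitting (the paper delegates this to Lemma~\ref{L3.1}, which is precisely your stage (ii)), and pass to the limit by Aubin--Lions with weak lower semicontinuity for the gradient terms. Your worry about $\sigma$-uniformity in the pressure estimate is not a genuine obstacle since $\sigma\in[0,1]$ appears only as a bounded multiplicative prefactor on the quadratic sources, so the Calder\'on--Zygmund and harmonic estimates are automatically uniform, exactly as in the paper's Lemma~\ref{L3.1}.
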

\begin{proof} For $f\in L^2_{\rm{loc}}(\R^3)$ and $R>0$, set
$$\|f\|_{L^2_R(\R^3)}:=\sup_{x\in\R^3}\Big(\int_{\mathbb B_R(x)}|f|^2\Big)^\frac12.$$
It is standard that there exist
$u_{0}^{k}\in C_{0}^{\infty}(\R^{3},\R^3)$ and $F_0^k\in C_0^\infty(\R^{3},\R^{3\times 3})$,
with $\mbox{div} (u_0^k)=0$ and $\mbox{div}(F_0^k)=0$, such that
$$
(u_{0}^{k}, F_0^k)\xrightarrow{k\rightarrow 0} (u_{0}, F_{0})
\ \  \mbox{in}\ \ L^2_{\rm{loc}}(\R^3),$$
and
$$\sup_k\|u_0^k\|_{L^2_R(\R^3)}\le 2\|u_0\|_{L^2_R(\R^3)},
\  \sup_k\|F_0^k\|_{L^2_R(\R^3)}\le 2\|F_0\|_{L^2_R(\R^3)}.
$$
By Leray's procedure \cite{Ler}, there exist $T_{k}>0$ and smooth solutions $(u^{k}, p^k, F^{k})$ to (1.3)$_\sigma$
on $\R^3\times [0,T_k)$,  under the initial conditions  $(u^k, F^k)\big|_{t=0}=(u_{0}^{k}, F_{0}^{k})$, such that
$$\begin{cases} (u^{k}, F^{k})\in L_{t}^{\infty}L_{x}^{2}\cap L_{t}^{2}H^{1}(\R^{3}\times [0,T_{k})),\\
\displaystyle\lim_{t\to0}\|u^{k}-u_{0}^{k}\|_{L^{2}(\R^{3})}=\lim_{t\to0}\|F^{k}-F_{0}^{k}\|_{L^{2}(\R^{3})}=0.
\end{cases}
$$
Employing the same argument as in  Lemma \ref{L3.1} below, we can conclude 
that there exist $T_*>0$ and $C_0>0$, independent of $k$ and $\sigma$, such that $T_{k}\ge T_*$, and 
\begin{equation}\label{NE3.2}
\sup_{0\le t\le T_*}\Big\{\|u^{k}(t)\|_{L^{2}_R(\R^{3})}+\|F^{k}(t)\|_{L^{2}_R(\R^{3})}\Big\}+
\ \sup_{x_{0}\in\R^{3}}\int_{0}^{T_*}\int_{\B_{R}(x_{0})}\Big(|\nabla u^{k}|^{2}+|\nabla F^{k}|^{2}\Big)\le C_0.
\end{equation}
and there exists $p^{k}(t)=p^{k}_{x_{0},R}(t)\in\R$ such that
 \begin{equation}\label{NE3.3}
 \sup_{x_{0}\in\R^{3}}\int_{0}^{T_*}\int_{\B_{R}(x_{0})}|p^{k}(x,s)-p^{k}(s)|^{\frac{3}{2}}\le C_0.
 \end{equation}
Thus we may employ an argument, similar to Lemma \ref{L2.1}, to conclude that after passing to a subsequence, 
$(u^{k}, F^{k}, p^{k})$ converges weakly in $L^2_{\rm{loc}}(\R^3\times [0,T_*))$
to a weak solution $(u, F, p)$ of the system  (1.3)$_\sigma$, under the initial condition $
(u,F)\big|_{t=0}=(u_{0}, F_{0})$, on $\R^3\times [0,T_*)$.

It remains to verify that  $(u, F, p)$  is a suitable weak solution of (1.3)$_\sigma$ on $\R^3\times [0,T_*)$, i.e., 
satisfying the local energy inequality \eqref{NE3.1}. In fact, since $(u^{k}, F^{k}, p^{k})$ is smooth in $\R^3\times [0,T_*)$,
we have
\begin{align}\label{NE3.4}
\begin{split}
&\partial_{t}\Big(|u^{k}|^{2}+|F^{k}|^{2}\Big)+2\Big(|\nabla u^{k}|^{2}+|\nabla F^{k}|^{2}\Big)\\
&=\Delta\Big(|u^{k}|^{2}+|F^{k}|^{2}\Big)
+\mbox{div}\Big[\sigma (|u^{k}|^{2}u^{k}+|F^{k}|^{2}u^{k})+2p^{k}u^{k}+2\sigma u^{k}\cdot F^{k}(F^{k})^{t}\Big].
\end{split}
\end{align}
Since $(u^k, F^k, p^k)$ satisfies (1.3)$_\sigma$ in $\R^3\times (0, T_*)$,
using the bounds \eqref{NE3.2} and \eqref{NE3.3} we can apply Aubin-Lions' compactness lemma to
to conclude that 
$$(u^k, F^k)\xrightarrow{k\rightarrow\infty} (u, F) \ \ {\rm{in}}\ \ L^3_{\rm{loc}}(\R^3\times [0, T_*)).$$
Hence  we have 
$$ |u^{k}|^{2}+|F^{k}|^{2}\xrightarrow{k\rightarrow\infty} |u|^{2}+|F|^{2},
$$
and
$$
\sigma(|u^{k}|^{2}u^{k}+|F^{k}|^{2}u^{k})+2p^{k}u^{k}+2\sigma u^{k}\cdot F^{k}(F^{k})^{t}
\xrightarrow{k\rightarrow\infty}\sigma( |u|^{2}u+|F|^{2}u)+2pu+2\sigma u\cdot FF^{t},$$
in $L^1_{\rm{loc}}(\R^3\times [0,T_*))$.

It follows from (\ref{NE3.2}) that 
$$
(|\nabla u^{k}|^{2}+|\nabla F^{k}|^{2})\,dxdt\rightharpoonup (|\nabla u|^{2}+|\nabla F|^{2})\,dxdt+\mu,
$$
as convergence of Radon measures as $k\rightarrow\infty$, where 
$\mu$ is a nonnegative Radon measure on $\R^3\times [0, T_*)$. 
Combining these convergences with \eqref{NE3.4}, we can show that the local energy inequality \eqref{NE3.1}
holds.

Finally we want to show that for any compact set $K\subseteq \R^{3}$,
$$
\lim_{t\to0}\Big(\|u(\cdot,t)-u_{0}\|_{L^{2}(K)}+\|F(\cdot,t)-F_{0}\|_{L^{2}(K)}\Big)=0.
 $$
Indeed, since for any $\varphi\in C_{0}^{\infty}(\R^{3})$, $t\to\int_{\R^{3}}(|u^{k}(x,t)|^{2}+|F^k(x,t)|^2)\varphi(x) $ is smooth on $[0,T^{*})$, we have
\begin{eqnarray}\label{NE3.5}
\int_{\R^{3}}(|u^{k}(x,t)|^{2}+|F^k(x,t)|^2)\varphi(x) &=&\int_{\R^{3}}(|u^{k}(x,0)|^{2}+|F^k(x,0)|^2)\varphi(x)\nonumber\\ 
&+&\int_{0}^{t}\int_{\R^{3}}\partial_{t}(|u^{k}(x,s)|^{2}+|F^k(x,s)|^2)\varphi(x).
\end{eqnarray}
By \eqref{NE3.4}, we have
\begin{align*}
 \begin{split}
\int_{0}^{t}\int_{\R^{3}}\partial_{t}(|u^{k}|^{2}+|F^k|^2)\varphi&=\int_{0}^{t}\int_{\R^{3}}\Bigg\{\Delta \phi\Big(|u^{k}|^{2}+|F^{k}|^{2}\Big)
-2\sigma u^{k}\otimes \nabla\phi\cdot F^{k}(F^{k})^{t}\\
&\qquad+\Big[\sigma(|u^{k}|^{2}+|F^{k}|^{2})+2p^{k}\Big]u^{k}\cdot\nabla\phi\Bigg\}:=A_{k}(\varphi,t).
\end{split}
\end{align*}
Since $(u^k,F^k)\rightarrow (u, F)$ in $L^3_{\rm{loc}}(\R^3\times [0,T_*))$, it follows that
$$ A_{k}(\varphi,t)\xrightarrow{k\rightarrow\infty}A(\varphi,t),$$ 
where 
$$ A(\varphi,t)=\int_{0}^{t}\int_{\R^{3}}\Bigg\{\Delta \phi\Big(|u|^{2}+|F|^{2}\Big)
-2\sigma u\otimes \nabla\phi\cdot FF^{t}
+\Big[\sigma(|u|^{2}+|F|^{2})+2p\Big]u\cdot\nabla\phi\Bigg\}.
$$
Sending $k\rightarrow\infty$ in \eqref{NE3.5} yields that for any $t>0$,
\begin{equation}\label{NE3.5.1}
\int_{\R^{3}}(|u(x,t)|^{2}+|F(x,t)|^2)\varphi(x) \le \int_{\R^{3}}(|u_0(x)|^{2}+|F_0(x)|^2)\varphi(x)
+A(\varphi,t).
\end{equation}
This, in particular, implies
$$\limsup_{t\rightarrow 0}\int_{\R^{3}}(|u(x,t)|^{2}+|F(x,t)|^2)\varphi(x)\le \int_{\R^{3}}(|u_0(x)|^{2}+|F_0(x)|^2)\varphi(x).$$
On the other hand, since 
$$(u(\cdot, t), F(\cdot, t))\rightharpoonup (u_0(\cdot), F_0(\cdot))
\ \mbox{in}\ L^2_{\rm{loc}}(\R^3),$$
it follows from the lower semicontinuity that
$$\liminf_{t\rightarrow 0}\int_{\R^{3}}(|u(x,t)|^{2}+|F(x,t)|^2)\varphi(x)\ge \int_{\R^{3}}(|u_0(x)|^{2}+|F_0(x)|^2)\varphi(x).$$ 
Thus we obtain
 $$
\lim_{t\to0}\int_{\R^{3}}(|u(x,t)|^{2}+|F(x,t)|^2)\varphi(x)=\int_{\R^{3}}(|u_0(x)|^{2}+|F_0(x)|^2)\varphi(x).$$
This completes the proof.
\end{proof}

We now give a few estimates for local Leray- weak solution of (1.3)$_\sigma$, for $\sigma\in [0,1]$.
\begin{Lemma}\label{L3.1}
There exist $0<C_{1}<1<C_{2}$ such that if $u_0\in L^2_{\rm{loc}}(\R^3,\R^3)$, with ${\rm{div}} u_0=0$,
$F_0\in L^2_{\rm{loc}}(\R^3,\R^{3\times 3})$, with ${\rm{div}} F_0=0$, satisfies
$$
 \alpha=\|u_0\|_{L^2_R(\R^3)}^2+ \|F_0\|_{L^2_R(\R^3)}^2<\infty$$
 for some $0<R<\infty$,  and $(u,F)$ is a local Leray weak solution of (1.3)$_\sigma$ for $\sigma\in [0,1]$,
 under the initial data $(u_{0},F_{0})$, on $\R^3\times [0,T_*)$ for some $0<T_*<\infty$. Then,
 for $\lambda=C_{1}\min(\alpha^{-2}R^{2},1)$, it holds
 \begin{equation}\label{NE3.6}
 \mbox{ess}\sup_{0\leq t<\lambda R^{2}}\sup_{x_{0}\in\R^{3}}\int_{\B_{R}(x_{0})}(|u|^{2}+|F|^{2})(x,t)+
\sup_{x_{0}\in\R^{3}}2\int_{0}^{\lambda R^{2}}\int_{\B_{R}(x_{0})}\Big(|\nabla u|^{2}+|\nabla F|^{2}\Big)\le C_{2}\alpha,
\end{equation}
 and, for some $p(t)=p_{x_{0},R}(t)\in\R$,
 \begin{equation}\label{NE3.7}
 \sup_{x_{0}\in\R^{3}}\int_{0}^{\lambda R^{2}}\int_{\B_{R}(x_{0})}|p(x,t)-p(t)|^{\frac{3}{2}}\le C_{2}\alpha^{\frac{3}{2}}R^{\frac{1}{2}}.
 \end{equation}
 \end{Lemma}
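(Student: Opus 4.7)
The plan is to adapt the Lemari\'e-Rieusset local Leray energy method, as in \cite{LR0, LR}, to the coupled system $(1.3)_\sigma$. Introduce the uniformly-localized quantities
\begin{align*}
A(t) &:= \sup_{x_0\in\R^3}\int_{\B_R(x_0)}\big(|u|^2+|F|^2\big)(x,t)\,dx,\\
D(t) &:= \sup_{x_0\in\R^3}\int_0^t\!\!\int_{\B_R(x_0)}\big(|\nabla u|^2+|\nabla F|^2\big)\,dx\,ds,
\end{align*}
so that $A(0)\le\alpha$. Via a continuity/bootstrap argument in $t$, the goal is to prove $A(t)+D(t)\le C_2\alpha$ on $[0,\lambda R^2]$; this will yield (\ref{NE3.6}), while (\ref{NE3.7}) will drop out of the pressure decomposition used along the way.

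Fix $x_0\in\R^3$ and pick a spatial cutoff $\phi\in C_0^\infty(\B_{2R}(x_0))$ with $\phi\equiv 1$ on $\B_R(x_0)$, $|\nabla\phi|\le C/R$, $|\Delta\phi|\le C/R^2$. Since $\mbox{div}\,u=0$, one may replace $p$ by $p-(p)_{x_0,2R}(t)$ in the local energy inequality (\ref{NE3.1}). The linear term $\int(\Delta\phi)(|u|^2+|F|^2)$ is controlled by $CR^{-2}\int_0^t A(s)\,ds$. The cubic flux term and the Maxwell-type term $\int|u|(|u|^2+|F|^2)|\nabla\phi|$ are handled by H\"{o}lder and Gagliardo-Nirenberg on $\B_{2R}(x_0)$ (after a finite cover by balls of radius $R$):
\[
\int_{\B_{2R}(x_0)}\!\big(|u|^3+|u||F|^2\big)\,dx\ \lesssim\ A(t)^{3/4}\Big(\int_{\B_{2R}(x_0)}\!|\nabla u|^2+|\nabla F|^2\Big)^{3/4}+R^{-3/2}A(t)^{3/2}.
\]

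The pressure flux $\int\big(p-(p)_{x_0,2R}(t)\big)u\cdot\nabla\phi$ requires a local pressure decomposition. Using $\mbox{div}\,u=\mbox{div}\,F=0$ (cf.\ (iii) in the introduction), taking divergence of the first equation of $(1.3)_\sigma$ yields $-\Delta p=\sigma\,\partial_i\partial_j(u_iu_j-F_{ik}F_{jk})$. On $\B_{3R}(x_0)$, write $p=p^{(1)}+p^{(2)}$ where $p^{(1)}$ comes from the Riesz-transform representation of the source cut off by $\chi_{\B_{3R}(x_0)}$, and $p^{(2)}$ is harmonic on $\B_{3R}(x_0)$. Calder\'on-Zygmund yields $\|p^{(1)}\|_{L^{3/2}(\B_{2R}(x_0))}\lesssim \big\||u|^2+|F|^2\big\|_{L^{3/2}(\B_{3R}(x_0))}$; interior harmonic estimates for $p^{(2)}-(p^{(2)})_{x_0,2R}(t)$ bound its $L^{3/2}$ oscillation on $\B_{2R}(x_0)$ by the same uniform cubic quantity plus a far-field tail controlled via the decay in Definition 3.1(i). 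Pointwise in $t$, this gives
\[
\int_{\B_{2R}(x_0)}\big|p-(p)_{x_0,2R}(t)\big|^{3/2}\,dx\ \lesssim\ \int_{\B_{3R}(x_0)}\big(|u|^2+|F|^2\big)^{3/2}\,dx+\text{(tail)},
\]
which is exactly what is needed for (\ref{NE3.7}) after integration in time.

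Combining all estimates, taking supremum over $x_0$ and absorbing $\tfrac12 D(t)$ by Young's inequality, one arrives at an integral inequality of the schematic form
\[
A(t)+D(t)\ \le\ 2\alpha+\frac{C}{R}\!\int_0^t A(s)^{3/4}\big(\partial_s D(s)\big)^{3/4}ds+\frac{Ct}{R^2}\sup_{s\le t}A(s)+\frac{C}{R^{5/2}}\!\int_0^t A(s)^{3/2}ds.
\]
A standard continuity argument then shows that if $A(t)+D(t)\le 4\alpha$ on some $[0,T']\subset[0,\lambda R^2]$ with $\lambda=C_1\min(\alpha^{-2}R^2,1)$ and $C_1>0$ small enough, each right-hand term is at most $\alpha/2$, so the bound self-improves to $A(t)+D(t)\le 2\alpha$, establishing (\ref{NE3.6}) with $C_2=2$. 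The pressure bound (\ref{NE3.7}) then follows by integrating the pointwise-in-$t$ pressure estimate above over $[0,\lambda R^2]$ and inserting (\ref{NE3.6}). The principal obstacle is the Lemari\'e-Rieusset-type pressure decomposition in the presence of the extra $FF^t$ nonlinearity --- specifically, controlling the far-field tail generated by the harmonic part $p^{(2)}$ uniformly in $x_0$; once that is settled, the bootstrap is essentially parallel to the Navier-Stokes case in \cite{LR0, LR}.
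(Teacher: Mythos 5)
Your overall strategy coincides with the paper's: localize the energy inequality \eqref{NE3.1} with a cutoff at scale $R$, bound the cubic terms via a Gagliardo--Nirenberg/multiplicative inequality in $\B_{2R}(x_0)$, split the pressure into a near-field Calder\'on--Zygmund piece and a far-field remainder, arrive at an integral inequality of the form $A(\lambda)\lesssim\alpha+\lambda A(\lambda)+\lambda^{1/4}R^{-1/2}A(\lambda)^{3/2}$, and close with a continuity argument in $\lambda$. This is the Lemari\'e--Rieusset scheme, and the exponents in your schematic inequality are compatible with the threshold $\lambda = C_1\min(\alpha^{-2}R^2,1)$.

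The one place where your argument genuinely diverges from the paper's, and where the gap you flag is real, is the pressure estimate. You propose $p = p^{(1)}+p^{(2)}$ on $\B_{3R}(x_0)$ with $p^{(1)} = (-\Delta)^{-1}{\rm div}^2[(u\otimes u - FF^t)\chi_{\B_{3R}(x_0)}]$ and $p^{(2)}$ harmonic in $\B_{3R}(x_0)$, and then want to bound $\|p^{(2)}-(p^{(2)})_{x_0,2R}(t)\|_{L^{3/2}(\B_{2R})}$ by interior harmonic estimates. But Definition 3.1 only gives $p$ as a distribution; there is no a priori local $L^1$ or $L^{3/2}$ control on $p$ (hence none on $p^{(2)} = p - p^{(1)}$) on the larger ball to feed into an interior estimate. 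The "decay in Definition 3.1(i)" concerns the vanishing of the localized $L^2$ norm of $(u,F)$ as $|x_0|\to\infty$, and does not transfer to the harmonic remainder of the pressure. The paper avoids this entirely by \emph{constructing} a representative of the pressure from scratch: with $k$ the kernel of $\Delta^{-1}{\rm div}^2$ and $\psi$ a cutoff supported in $\B_{8R}(x_0)$, it sets
\[
p(x,t) - p(t) = -\Delta^{-1}{\rm div}^2\big[(u\otimes u+FF^t)\psi\big] - \int_{\R^3}\big[k(x-y)-k(x_0-y)\big]\big[(u\otimes u+FF^t)(1-\psi)\big](y)\,dy ,
\]
and uses the kernel cancellation $|k(x-y)-k(x_0-y)|\lesssim R\,|x_0-y|^{-4}$ for $|x_0-y|\ge 4R$, $|x-x_0|\le 2R$, together with the \emph{uniform-in-$x_0$} local $L^2$ bound encoded in $A(\lambda)$, to make the far-field integral converge and be bounded by $C R^{-3}A(\lambda)$ on $\B_{2R}(x_0)$. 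So the far-field control comes from the subtracted kernel plus the uniform local $L^2$ bound, not from interior harmonic estimates nor from the decay-at-infinity clause. If you replace your $p^{(1)}/p^{(2)}$ split by this explicit subtracted-kernel representation, the rest of your argument goes through as written.
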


\begin{proof}\ It follows from the local energy inequality \eqref{NE3.1} that for any $x_0\in \R^3$, 
\begin{align}\label{E3.1}
\begin{split}
 &\int_{\R^{3}}\phi(x-x_{0})\Big(|u|^{2}+|F|^{2}\Big)(x,t)+2\int_{0}^{t}\int_{\R^{3}}\varphi(x-x_{0})\Big(|\nabla u|^{2}+|\nabla F|^{2}\big)\\
&\leq\int_{0}^{t}\int_{\R^{3}}\Bigg\{\Delta \varphi(x-x_{0})\Big(|u|^{2}+|F|^{2}\Big)
-2\sigma u\otimes \nabla\varphi\cdot FF^{t}\\
&\quad+\Big(\sigma(|u|^{2}+|F|^{2})+2p\Big)u\cdot\nabla\varphi\Bigg\}
+\int_{\R^{3}}\varphi(x-x_{0})\Big(|u_0|^2+|F_0|^2\Big)
\end{split}
\end{align}
holds for a.e. $t>0$, where $\varphi$ is a nonnegative, smooth cutoff function with $\varphi=1$ in $\B_{R}(x_0)$,
$\varphi=0$ outside $\B_{2R}(x_0)$, and $|\nabla \varphi|\leq \frac{C}{R}$. 

For $\lambda\leq1$, set
$$
A(\lambda)=\mbox{ess} \sup_{0\leq t<\lambda R^{2}}\sup_{x_{0}\in\R^{3}}
\int_{\R^{3}}\varphi(x-x_{0})\Big(|u|^{2}+|F|^{2}\Big)+
\sup_{x_{0}\in\R^{3}}\int_{0}^{\lambda R^{2}}\int_{\R^{3}}2\varphi(x-x_{0})\Big(|\nabla u|^{2}+|\nabla F|^{2}\Big).
$$
From the  multiplicative inequality
$$
\|u\|_{L^{3}(\B_{R})}\leq C\Big(\|\nabla u\|_{L^{2}(\B_{R})}^{\frac{1}{2}}\| u\|_{L^{2}(\B_{R})}^{\frac{1}{2}}+R^{-\frac{1}{2}}\| u\|_{L^{2}(\B_{R})}\Big),
$$
we have that for $\lambda\le 1$, 
$$
\sup_{x_{0}\in\R^{3}}\int_{0}^{\lambda R^{2}}\int_{\B_{R}(x_{0})}\Big(| u|^{3}+|F|^{3}\Big)\leq CA(\lambda)^{\frac{3}{2}}R^{\frac{1}{2}}\lambda^{\frac{1}{4}}.
$$
Let $ k(x)$ denote the kernel of $\Delta^{-1}\mbox{div}^2$ and define $p(x,t)$ by
\begin{align*}
p(x,t)=&-\Delta^{-1}\mbox{div}^2\Big[(u\otimes u+FF^t)\psi\Big]\\
&-\int_{\R^{3}}\Big[k(x-y)-k(x_{0}-y)\Big]\Big[(u\otimes u+FF^t)(1-\psi)\Big](y)dy+p(t)\\
=&I_{1}(x)+I_{2}(x),
\end{align*}
where $\psi\in C_0^\infty(\B_{8R}(x_0))$ is such that $\psi|_{\B_{4R}(x_{0})}=1, 0\leq\psi\leq 1$, and
$|\nabla \psi|\leq\frac{C}{R}$.

By the $L^{p}$ estimates, we have
\begin{align*}
\|I_{1}\|_{L^{\frac{3}{2}}(\B_{2R}(x_{0})\times(0,\lambda R^{2}))}&\leq C\Big(\| u\|^{2}_{L^{3}(\B_{8R}(x_{0})\times(0,\lambda R^{2}))}
+\| F\|^{2}_{L^{3}(\B_{8R}(x_{0})\times(0,\lambda R^{2}))}\Big)\\
&\leq C\lambda^{\frac{1}{6}}A(\lambda)R^{\frac{1}{3}}.
\end{align*}
On the other hand, since $k(x-y)-k(x_{0}-y)$ satisfies
$$
|k(x-y)-k(x_{0}-y)|\leq \frac{CR}{|x_{0}-y|^{4}}\ \ \mbox{for}\ |x_{0}-y|\geq 4R,\ |x-x_{0}|\leq 2R,
$$
we can easily obtain that for $x\in \B_{2R}(x_0)$, 
\begin{align*}
I_{2}(x)&\leq CR\int_{|y-x_{0}|\geq 4R}\frac{1}{|x_{0}-y|^{4}}(|u|^{2}+|F|^{2})(y)dy\\
&=CR\sum_{n=4}^{\infty}\int_{nR\leq|y-x_{0}|\leq (n+1)R}\frac{1}{|x_{0}-y|^{4}}(|u|^{2}+|F|^{2})(y)dy\\
&\leq CR^{-3}A(\lambda)\sum_{n=4}^{\infty}\frac{1}{n^{2}}\leq CR^{-3}A(\lambda).
\end{align*}
Combining the estimates of $I_{1}$ with $I_{2}$, we immediately have
\begin{equation}\label{E3.2}
\|p(x,t)-p(t)\|_{L^{\frac{3}{2}}(\B_{2R}(x_{0})\times(0,\lambda R^{2}))}\leq \lambda^{\frac{1}{6}}A(\lambda)R^{\frac{1}{3}}\ \ \mbox{for}\ \lambda\leq 1.
\end{equation}
Combining (\ref{E3.1}) with (\ref{E3.2}), and using H\"{o}lder inequality, we have
\begin{equation}\label{E3.3}
A(\lambda)\leq \alpha+CA(\lambda)\lambda+CA^{\frac{3}{2}}(\lambda)\lambda^{\frac{1}{4}}R^{-\frac{1}{2}}.
\end{equation}
Note that $A(\lambda)$ is a bounded, continuous function of $\lambda$ and $A(0)=0$.
By choosing $C_{1}$ sufficiently small, we obtain $A(\lambda)<C_2\alpha$ by the usual ``continuation in $\lambda$''
argument \cite{LR0, LR}. Finally, by (\ref{E3.2}), we have
$$
 \sup_{x_{0}\in\R^{3}}\int_{0}^{\lambda R^{2}}\int_{\B_{R}(x_{0})}|p(x,t)-p(t)|^{\frac{3}{2}}\le C_{2}\alpha^{\frac{3}{2}}R^{\frac{1}{2}}.
$$
This completes the proof.
\end{proof}

The following result plays a crucial role in the proof of Theorem \ref{T1.1}.
\begin{Lemma}\label{L3.2} 
Suppose $ u_0\in L^2_{\rm{loc}}(\R^3,\R^3), F_0\in L^2_{\rm{loc}}(\R^3,\R^{3\times 3})$,
with $\mbox{div} u_{0}=0$ and $\mbox{div} F_{0}=0$, satisfy
$$\|u_{0}\|_{L^{2}_2(\R^{3})}^{2}+\|F_{0}\|_{L^{2}_{2}(\R^{3})}^{2}\leq A_{1}<\infty,$$
and
$$\|u_{0}\|_{C^{\gamma}(\B_{2})}+\|F_{0}\|_{C^{\gamma}(\B_{2})}\leq A_{2}<\infty,
$$ for some $\gamma\in (0,1)$. 
Then there exists
$T=T(A_{1},A_{2},\gamma)$ such that  any local weak solution $(u, F)\in \mathcal{N} (u_{0},F_{0})$ 
of (1.3)$_\sigma$, for $\sigma \in [0,1]$, satisfies
$(u, F)\in C_{par}^{\gamma}(\overline{\B}_{\frac{1}{4}}\times[0,T])$, and
\begin{equation}\label{E3.2.2}
\big\|u\big\|_{ C_{par}^{\gamma}(\overline{\B}_{\frac{1}{4}}\times[0,T])}+\big\|F\big\|_{ C_{par}^{\gamma}(\overline{\B}_{\frac{1}{4}}\times[0,T])}\leq
 C(A_{1},A_{2},\gamma).
 \end{equation}
\end{Lemma}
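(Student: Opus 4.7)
Following \cite{JS}, I will compare $(u,F,p)$ with a smooth short-time reference solution $(a,M,p_a)$ of $(1.3)_\sigma$ built from an extension of the initial data, and apply the $\epsilon$-regularity Theorem \ref{Th2.1} to the perturbation $(v,H,q)=(u-a,F-M,p-p_a)$, which vanishes at $t=0$ in $L^2_{\rm{loc}}$ and is therefore small on parabolic cylinders abutting the initial slice.

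\textbf{Step 1 (reference solution).} I first extend $(u_0,F_0)|_{\B_{3/2}}$ to globally defined divergence-free fields $(\tilde u_0,\tilde F_0)\in L^2(\R^3)\cap C^\gamma(\R^3)$ that agree with $(u_0,F_0)$ on $\B_1$, via a cutoff followed by a Bogovski-type divergence correction, with norms controlled in terms of $A_1,A_2,\gamma$. A classical local existence argument in $L^\infty_t L^\infty_x\cap L^\infty_t C^\gamma_x$ then produces a mild solution $(a,M,p_a)$ of $(1.3)_\sigma$ on $\R^3\times[0,T_0]$, with $T_0=T_0(A_1,A_2,\gamma)>0$ and
\[
\|(a,M)\|_{L^\infty(\R^3\times[0,T_0])}+\|a\|_{C^\gamma_{par}}+\|M\|_{C^\gamma_{par}}+\|p_a\|_{L^{3/2}_{\rm{loc}}}\leq C(A_1,A_2,\gamma),
\]
uniformly in $\sigma\in[0,1]$ since every nonlinear term in $(1.3)_\sigma$ is multiplied by $\sigma$.

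\textbf{Step 2 (perturbation and smallness).} A direct computation shows $(v,H,q)$ solves \eqref{E2.1} on $\R^3\times(0,T_0]$ with the above $(a,M)$, and since $(a,M,p_a)$ is smooth, $(v,H,q)$ inherits the suitable-weak-solution property from $(u,F,p)$. The local strong $L^2$-continuity of local Leray solutions at $t=0$, together with $(\tilde u_0,\tilde F_0)=(u_0,F_0)$ on $\B_1$, gives
\[
\|v(\cdot,t)\|_{L^2(\B_1)}+\|H(\cdot,t)\|_{L^2(\B_1)}\xrightarrow{t\to 0^+}0,
\]
which combined with the uniform $L^\infty_tL^2_x\cap L^2_tH^1_x$ bound from Lemma \ref{L3.1} and the parabolic interpolation \eqref{E2.6.1} yields $\|v\|_{L^{10/3}(\B_1\times(0,\tau))}+\|H\|_{L^{10/3}(\B_1\times(0,\tau))}\to 0$ as $\tau\to 0$. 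The pressure $q$ satisfies $-\Delta q=\sigma\,{\rm{div}}^2[(u\otimes u-FF^t)-(a\otimes a-MM^t)]$; decomposing via the kernel of $\Delta^{-1}{\rm{div}}^2$ into a localized piece supported in $\B_2$ and a spatially harmonic far-field piece, exactly as in Lemma \ref{L3.1}, converts the $L^3$-smallness of $(v,H)$ into smallness of $\|q-(q)_{0,1}(t)\|_{L^{3/2}(\B_1\times(0,\tau))}$ as $\tau\to 0$.

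\textbf{Step 3 (application of $\epsilon$-regularity and main obstacle).} For $z_0=(x_0,t_0)\in\overline{\B}_{1/4}\times(0,T]$ with $T\leq T_0$ small, I pick $r=c\sqrt{t_0}$ small enough that $Q_r(z_0)\subset\B_1\times(0,T]$ and rescale $(v,H,q,a,M)$ from $Q_r(z_0)$ to unit scale. The rescaled quantities satisfy the hypothesis \eqref{Cond2.1} of Theorem \ref{Th2.1} once $T=T(A_1,A_2,\gamma)$ is taken sufficiently small --- the $(v,H,q)$ contribution coming from Step 2, and the $(a,M)$ contribution from the factor $r\to 0$ produced by rescaling their $L^\infty$ bounds. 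Theorem \ref{Th2.1} then gives
\[
\|v\|_{C^{\alpha_0}_{par}(Q_{r/2}(z_0))}+\|H\|_{C^{\alpha_0}_{par}(Q_{r/2}(z_0))}\leq C(A_1,A_2,\gamma),
\]
and a standard parabolic covering argument across $\overline{\B}_{1/4}\times(0,T]$, combined with $v(\cdot,0)=H(\cdot,0)=0$, upgrades this to parabolic Hölder continuity of $(v,H)$ on $\overline{\B}_{1/4}\times[0,T]$; adding back the $C^\gamma_{par}$-regular $(a,M)$ yields \eqref{E3.2.2} with exponent $\min(\gamma,\alpha_0)$. The main technical obstacle is the pressure estimate of Step 2: because $q$ is nonlocal and its natural scaling is weaker than that of $(v,H)$, the local/far-field decomposition must be performed with care so that the far-field piece can be absorbed into the $t$-dependent mean $(q)_{0,1}(t)$ and the cutoff-dependent constants survive the rescaling in Step 3.
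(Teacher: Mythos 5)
Your Steps 1 and 2 essentially reproduce the paper's construction: extend the initial data on $\B_{4/3}$ to divergence-free fields on $\R^3$, build a mild reference solution $(a,M)$ with $L^p$ (or $L^\infty$) bounds uniform in $\sigma\in[0,1]$, and exhibit $(v,H,q)=(u-a,F-M,p-p_a)$ as a suitable weak solution of \eqref{E2.1} with $\|v(t)\|_{L^2(\B_{4/3})}+\|H(t)\|_{L^2(\B_{4/3})}\to 0$, whence local energy and pressure estimates that vanish as $\tau\to 0$. All of this matches the paper.

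\textbf{Step 3, however, has a genuine gap.} You propose to rescale $(v,H,q,a,M)$ from $Q_r(z_0)$ with $r=c\sqrt{t_0}$ to the unit cylinder and invoke Theorem \ref{Th2.1}. The trouble is that the scale-invariant quantities do not become small as $t_0\to 0$. Writing $\tilde v(y,s)=rv(x_0+ry,t_0+r^2s)$, one has
$\big(\aint_{Q_1}|\tilde v|^3\big)^{1/3}=r\big(\aint_{Q_r(z_0)}|v|^3\big)^{1/3}\le C\,r^{-1/2}\|v\|_{L^{10/3}(Q_r(z_0))}$,
since $|Q_r|\sim r^5$. The paper's estimate \eqref{E3.6.1} and interpolation give only $\|v\|_{L^{10/3}(\B_1\times(0,t_0))}\lesssim t_0^{1/60}$, so with $r\sim\sqrt{t_0}$ one obtains $r^{-1/2}\|v\|_{L^{10/3}}\sim t_0^{-1/4+1/60}\to\infty$ as $t_0\to 0$. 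For this scheme to close, one would need $E(t)\lesssim t^{1/2+\varepsilon}$, a far stronger decay than Step 2 yields. (There is a companion problem on the output side: Theorem \ref{Th2.1} gives $\|\tilde v\|_{C^{\alpha_0}_{par}(Q_{1/2})}\le C$, which unwinds to $\|v\|_{L^\infty(Q_{r/2}(z_0))}\le Cr^{-1}$ and $[v]_{C^{\alpha_0}_{par}(Q_{r/2}(z_0))}\le Cr^{-1-\alpha_0}$ --- not the uniform bound your covering argument needs.)

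The paper circumvents both issues at once: it \emph{extends $(v,H,q,a,M)$ by zero to $\B_1\times(-1+t_0,0]$}, checks that the extended triple remains a suitable weak solution of \eqref{E2.1} on the full unit-size cylinder $\B_1\times(-1+t_0,t_0)$ (using the $L^2$-continuity at $t=0$), and then applies Theorem \ref{Th2.1} directly \emph{at unit scale}. There the averages $\aint_{Q_1}|v|^3$, $\aint_{Q_1}|q|^{3/2}$, $\aint_{Q_1}|a|^m$ are small simply because the integrands vanish on most of the cylinder, so \eqref{Cond2.1} holds for $t_0$ small; no rescaling factor ever appears, and the output Hölder bound on $Q_{1/2}$ immediately gives the claimed estimate on $\B_{1/2}\times[0,t_0]$. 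You should replace your interior rescaling plus covering by this extension-by-zero argument; without it, the smallness hypothesis of the $\epsilon$-regularity theorem cannot be verified uniformly as $t_0\to 0$.
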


\begin{proof} Let us decompose $u_{0}=u_{0}^{1}+u_{0}^{2}, F_{0}=F_{0}^{1}+F_{0}^{2}$, with
$\mbox{div} u_{0}^{1}=\mbox{div} F_{0}^{1}=0$, such that
$$\begin{cases} u_{0}^{1}|_{\B_{\frac{4}{3}}}=u_{0},\ \  F_{0}^{1}|_{\B_{\frac{4}{3}}}=F_{0},\\
\mbox{supp}\ u_{0}^{1},\ \ \mbox{supp}\ F_{0}^{1}\Subset \B_{2},\\
\|u_{0}^1\|_{L^{2}_2(\R^{3})}^{2}+\|F_{0}^1\|_{L^{2}_{2}(\R^{3})}^{2}\leq C A_{1},\\
\|u_{0}^{1}\|_{C^{\gamma}(\R^{3})}+\|F_{0}^{1}\|_{C^{\gamma}(\R^{3})}\leq CA_{2}. 
\end{cases}
$$
Let $(a, M)$ be the mild solution to the system (1.3)$_\sigma$, with $(u_{0}^{1},F_{0}^{1})$ as the initial data,
in $\R^{3}\times[0,T_{1})$ for some $T_1=T_1(A_1,A_2)>0$.
Then we have  that for any $5\le p<+\infty$ (see the appendix below),
\begin{equation}\label{E3.3.1}
\big\|(a,M)\big\|_{L^{p}(\R^{3}\times(0,T_{1}))}\leq C(A_1, A_{2}).
\end{equation}
Moreover, by the local energy estimate for $(a, M)$, we have
\begin{equation}\label{E3.4}
\mbox{ess} \sup_{0\leq t<T_{1}}\int_{\B_{2}(x_{0})}\Big(|a|^{2}+|M|^{2}\Big)+
2\int_{0}^{T_{1}}\int_{\B_{2}(x_{0})}\Big(|\nabla a|^{2}+|\nabla M|^{2}\Big)\leq C(A_{1},A_{2}),
\end{equation}
for any $x_{0}\in \R^{3}$. 

Write $u=a+v, F=M+H$, we can verify that $(v,H,q)$ is a weak solution of the perturbed system \eqref{E2.1}
in $\R^{3}\times [0,T_{1})$, that satisfies the local energy inequality (\ref{E2.2+}). Note also that
 $$
 \lim_{t\to0^{+}}\Big(\|v(\cdot,t)\|_{L^{2}(\B_{\frac43})}+\|H(\cdot,t)\|_{L^{2}(\B_{\frac43})}\Big)=0.
 $$
 Combining this with the local energy inequality for $(v,H,q)$, we obtain that for any $0<t\le T_1$,
\begin{align}\label{E3.6}
\nonumber&\int_{\B_{\frac43}}\varphi\Big(|v|^{2}+|H|^{2}\Big)+2\int_{0}^{t}\int_{\B_{\frac43}}\varphi\Big(|\nabla v|^{2}+|\nabla H|^{2}\Big)\leq\\
\nonumber&\int_{0}^{t}\int_{\B_{\frac43}}\Big\{\Delta\varphi(|v|^{2}+|H|^{2})-2\sigma u\otimes\nabla \varphi\cdot HH^{t}\Big\}+
\int_{0}^{t}\int_{\B_{\frac43}}\Big\{\sigma (|v|^{2}+|H|^{2})(v+a)+2qv\Big\}\cdot \nabla \varphi \\
\nonumber&+2\sigma\int_{0}^{t}\int_{\B_{\frac43}}\Big(a\otimes v-MH^t-HM^t\Big)\cdot \Big(\nabla v\varphi+v\otimes\nabla \varphi\Big)
\\&+2\sigma\sum\limits_{i,j,k=1}^{3}\int_{0}^{t}\int_{\B_{\frac43}}\Big (v_k M_{ij}-a_kH_{ij}-v_iM_{kj}\Big)
\cdot \Big (\nabla_k H_{ij}\phi+H_{ij}\nabla_k \phi\Big),
\end{align}
where $\phi\in C_{0}^{\infty}(\B_{\frac43})$ is such that $\phi|_{\B_{1}}\equiv 1, \phi\geq 0$.

By Lemma \ref{L3.1} and (\ref{E3.4}), we conclude that there exists $0<T_{2}=T_2(A_{1},A_{2})\le T_1$
such that
\begin{align}\label{E3.4.1}
\mbox{ess}\sup_{0\leq t<T_{2}}\int_{\B_{2}}(|v|^{2}+|H|^{2})+
\int_{0}^{T_{2}}\int_{\B_{2}}\big[|\nabla v|^{2}+|\nabla H|^{2}
+|q(x,t)-q(t)|^{\frac{3}{2}}\big]
\le C(A_{1},A_{2}).
\end{align}
This, by the interpolation inequalities, implies 
$$
\Big(\int_{0}^{T_{2}}\int_{\B_{2}}\big[|v|^{\frac{10}3}+|H|^{\frac{10}3}]\Big)^{\frac3{10}}\leq C(A_{1},A_{2}).
$$
Thus by (\ref{E3.6}) and H\"older's inequality we obtain that
\begin{align}\label{E3.6.1}
\sup_{0\le\tau\le t}\int_{\B_{1}}\Big(|v|^{2}+|H|^{2}\Big)(x,\tau)+
2\int_{0}^{t}\int_{\B_{1}}\Big(|\nabla v|^{2}+|\nabla H|^{2}\Big) 
\leq C(A_{1},A_{2})t^{\frac{1}{30}},
\end{align}
for $0<t\le T_{2}$. Here we have applied (\ref{E3.3.1}) with $p=6$. 
Since $q$ solves 
\begin{equation}\label{E3.5.1}
\Delta q=-\sigma{\rm{div}}^2\big[v\otimes a+a\otimes v+v\otimes v
-HH^t-MH^t-HM^t\big].
\end{equation}
From the standard $L^p$-estimate, (\ref{E3.4.1}),  and (\ref{E3.6.1}), we see that $q\in L_{\rm{loc}}^{\frac{5}{3}}(\B_2\times [0, T_2])$ is bounded. Hence
\begin{equation}\label{E3.6.2}
\Big(\int_{0}^{t}\int_{\B_{2}}|q|^{\frac{3}{2}}\Big)^{\frac{2}{3}}\leq C(\alpha,A_{1},A_{2})t^{\frac{1}{15}}.
\end{equation}
Now for $t_{0}>0$ fixed,  extend $v,H,q$ and $a, M$ to $\B_{1}\times(-1+t_{0},t_{0}]$ by letting
$$v=0, H=0, q=0, a=0, M=0\ \ {\rm{in}}\ \ B_{1}\times(-1+t_{0},0].$$
Note that
$$
 \lim_{t\to0^{+}}\left(\|v(\cdot,t)\|_{L^{2}(\B_{4/3}(0))}+\|H(\cdot,t)\|_{L^{2}(\B_{4/3}(0))}\right)=0.
 $$
We can check that $(v,H,q)$ is a suitable weak solution to the system (1.3)$_\sigma$ in $\B_{1}\times[-1+t_{0},t_{0})$.
By choosing $t_{0}=t_{0}(\alpha,A_{1},A_{2})$ sufficiently small and using (\ref{E3.6.1}) and (\ref{E3.6.2}), we can apply Theorem \ref{Th2.1} to conclude that  $(v,H)$ is H\"{o}lder continuous in
$\B_{\frac{1}{2}}\times [0,t_{0}]$ for some $0<\beta\le\gamma$, with
$$
\big\|(v, H)\big\|_{C_{par}^{\beta}(\B_{\frac12}\times[0,t_{0}])}\leq C(\alpha,A_{1},A_{2}).
$$
By a standard bootstrapping argument, we can improve the exponent $\beta$ to equal to $\gamma$. 
Since $a,M\in C_{par}^{\gamma}(\B_{\frac12}\times[0,t_{0}])$, 
we conclude that  $u,F\in  C_{par}^{\gamma}(\B_{\frac12}\times[0,t_{0}])$ and (\ref{E3.2.2}) holds.
This completes the proof.
\end{proof}

\subsection{A-priori estimate for the self-similar solution}
This subsection is devoted to establish a priori estimates for forward self-similar solutions
of (1.3)$_\sigma$ for $\sigma\in [0,1]$.

\begin{Lemma}\label{L3.3} For $\gamma\in (0,1)$, 
let $u_{0}\in C_{\rm{loc}}^{\gamma}\Big(\R^{3}\setminus \{0\},\R^3\Big)$ and
$F_0\in  C_{\rm{loc}}^{\gamma}\Big(\R^{3}\setminus \{0\},\R^{3\times 3}\Big)$,
with $\mbox{div}u_{0}=0$ and $\mbox{div}F_{0}=0$, be homogeneous of degree $-1$. 
Assume (\ref{E1.4}) holds.  Let $(u, F)\in \mathcal{N} (u_{0},F_{0})$ be a forward self-similar solution
of (1.3)$_\sigma$ 
for $\sigma\in [0,1]$. 
Then there exists $C>0$, independent of $\sigma$, such that
$\mathcal U(\cdot):=u(\cdot,1)$ and $\mathcal F(\cdot):=F(\cdot,1)$ satisfy
\begin{equation}\label{S3.1}
\big|\mathcal U(x)-e^{\Delta}u_{0}(x)\big|+\big|\mathcal F(x)-e^{\Delta}F_{0}(x)\big|
\leq\frac{C}{(1+|x|^{2})^{\frac{1+\gamma}2}},
\ \ \forall\ x \in \R^{3}.
\end{equation}
\end{Lemma}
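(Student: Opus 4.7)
The plan is to combine a parabolic rescaling of $(u,F)$ about a base point $(x_0,0)$ with $|x_0|$ large, the H\"older estimate of Lemma \ref{L3.2}, and a direct heat-kernel comparison of the initial data with its heat extension. Fix $x_0\in\R^3$ with $|x_0|\ge R_0:=4/\sqrt{T}$, where $T=T(A_1,A_2,\gamma)$ is the time constant of Lemma \ref{L3.2}; set $R=|x_0|/4$ and rescale
$$\tilde u(y,s):=Ru(x_0+Ry,R^2s),\quad \tilde F(y,s):=RF(x_0+Ry,R^2s),$$
so that $(\tilde u,\tilde F)\in \mathcal{N}(\tilde u_0,\tilde F_0)$ with $\tilde u_0(y)=Ru_0(x_0+Ry)$, $\tilde F_0(y)=RF_0(x_0+Ry)$. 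Since $u_0,F_0$ are homogeneous of degree $-1$ and locally $C^\gamma$ off the origin, and since $|x_0+Ry|\ge |x_0|/2=2R$ for $|y|\le 2$, a short computation gives $\|\tilde u_0\|_{L^2_2(\R^3)}^2+\|\tilde F_0\|_{L^2_2(\R^3)}^2\le A_1$ and $\|\tilde u_0\|_{C^\gamma(\B_2)}+\|\tilde F_0\|_{C^\gamma(\B_2)}\le A_2$, with $A_1,A_2$ independent of $x_0$ and of $\sigma\in[0,1]$.

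Applying Lemma \ref{L3.2} then yields $(\tilde u,\tilde F)\in C_{par}^\gamma(\overline{\B}_{1/4}\times[0,T])$ with bounds depending only on $A_1,A_2,\gamma$. Taking $s=1/R^2\le T$ and $y=0$ in the parabolic H\"older inequality gives $|\tilde u(0,1/R^2)-\tilde u_0(0)|\le CR^{-\gamma}$, which unscales to $|\mathcal{U}(x_0)-u_0(x_0)|\le CR^{-(1+\gamma)}\le C|x_0|^{-(1+\gamma)}$, and analogously for $\mathcal{F}$. To replace $u_0(x_0)$ by $e^{\Delta}u_0(x_0)$, I would split
$$u_0(x_0)-e^{\Delta}u_0(x_0)=\int_{\R^3}K(z,1)\big[u_0(x_0)-u_0(x_0-z)\big]\,dz$$
into a bulk $|z|\le|x_0|/2$ and a Gaussian tail. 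On the bulk, the homogeneity of $u_0$ together with its $C^\gamma$-regularity on the unit sphere yields the seminorm bound $[u_0]_{C^\gamma(\B_{|x_0|/2}(x_0))}\le C|x_0|^{-(1+\gamma)}$, so the bulk contributes $\le C|x_0|^{-(1+\gamma)}\int|z|^\gamma K(z,1)\,dz\le C|x_0|^{-(1+\gamma)}$, while the tail is $O(e^{-c|x_0|^2})$. The same argument applied to $F_0$ then yields \eqref{S3.1} on the region $\{|x_0|\ge R_0\}$.

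For $|x_0|\le R_0$, the right-hand side of \eqref{S3.1} is bounded below by a positive constant, and it suffices to bound $|\mathcal{U}|+|\mathcal{F}|+|e^{\Delta}u_0|+|e^{\Delta}F_0|$ uniformly on this compact set; the heat extensions are bounded because $u_0,F_0\in L^{3,\infty}$, and the profile values $\mathcal{U},\mathcal{F}$ are obtained by combining the near-initial-time rescaling (at a threshold scale $R\sim 1/\sqrt{T}$) with the uniform local Leray bound of Lemma \ref{L3.1}. The main technical obstacle is certifying the uniformity of the $C^\gamma(\B_2)$ bound for the rescaled data $\tilde u_0,\tilde F_0$: this rests on the fact that the map $f(x)\mapsto Rf(x_0+R\,\cdot)$ is an isometry up to constants on the annulus $|x|\in[|x_0|/2,3|x_0|/2]$ for homogeneous degree $-1$ data, with $C^\gamma$ norm controlled uniformly by the $C^\gamma$ norm of the data on the unit sphere. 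A secondary obstacle is the bounded-$|x_0|$ regime near the origin, which must be handled using the elliptic character of the profile system \eqref{E1.3} together with the interior regularity of self-similar solutions off the spatial origin.
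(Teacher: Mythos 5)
The core machinery in your proposal matches the paper's: Lemma \ref{L3.2} (local-in-space near initial time) plus self-similarity for $|x|$ large, and Lemma \ref{L3.1} plus the elliptic profile system for $|x|$ bounded. For $|x|\ge R_0$ your route is correct but takes a detour: you first bound $|\mathcal{U}(x_0)-u_0(x_0)|$ and then separately estimate $|u_0(x_0)-e^{\Delta}u_0(x_0)|$ by a heat-kernel split. The paper bypasses this second estimate by observing that $u-e^{t\Delta}u_0$ is itself parabolically H\"older (Lemma \ref{L3.2} applies to the solution, and the heat flow of the H\"older-continuous initial data is H\"older by standard parabolic theory), and since this difference vanishes identically at $t=0$, the H\"older bound immediately yields $|u(x,t)-e^{t\Delta}u_0(x)|\le Ct^{\gamma/2}$ for $|x|=2$, $0<t\le T_1$; self-similarity of $u-e^{t\Delta}u_0$ then converts this to \eqref{S3.1} for $|x|>2/\sqrt{T_1}$. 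Your heat-kernel argument (with the seminorm bound $[u_0]_{C^\gamma(\B_{|x_0|/2}(x_0))}\lesssim |x_0|^{-(1+\gamma)}$ from homogeneity) is a valid but avoidable extra step.

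For $|x|\le R_0$ there is a genuine gap. You cannot obtain a bound on $\mathcal{U}(x_0)=u(x_0,1)$ via a ``near-initial-time rescaling at threshold scale $R\sim 1/\sqrt{T}$'': Lemma \ref{L3.2} controls the solution only on a short time interval $[0,T]$ near points away from the spatial origin, but for $|x_0|$ small the scaling that brings $t=1$ down to $t\le T$ pushes the spatial argument far from the origin, never landing at $x_0$ itself; and if instead you rescale spatially at a fixed scale about $x_0$ with $|x_0|$ small, the rescaled ball $\B_2$ contains the singularity of $u_0$ so the hypothesis $\|\tilde u_0\|_{C^\gamma(\B_2)}<\infty$ fails. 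Your pointer to the ``elliptic character of the profile system'' is aimed in the right direction, but the missing idea is the scaling computation the paper supplies: using Lemma \ref{L3.1} together with the self-similarity of $(u,F)$, one converts the local Leray energy bound on $\B_1\times[0,T_1)$ into a uniform $H^1$ bound on the profile $(\mathcal{U},\mathcal{F})$ over the fixed ball $\B_{4/\sqrt{T_1}}$ (containing the origin), and then interior elliptic regularity for the $\sigma$-dependent profile system \eqref{S3.3} upgrades this to uniform $C^k$ bounds on $\B_{2/\sqrt{T_1}}$, giving a $\sigma$-independent $L^\infty$ bound on the compact set. Without this transfer from the local energy inequality to the profile's Sobolev norm near the origin, the bounded region is not controlled.
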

\begin{proof} First, by Lemma \ref{L3.2},  there exists $T_{1}=T_{1}(\gamma, C_{*})$, where
$C_*$ is the constant in (\ref{E1.4}),  such that for any $x_{0}\in \R^{3}$ with $|x_{0}|=2$,
$$
\Big\|\big(u(x,t)-e^{t\Delta}u_{0}(x), F(x,t)-e^{t\Delta}F_{0}(x)\big) \Big\|_{C_{par}^{\gamma}({\B}_{\frac{1}{9}}(x_{0})\times[0,T_{1}])}\leq C(\gamma, C_{*}),
$$
where we have used the fact that both $e^{t\Delta}u_{0}$ and $e^{t\Delta}F_{0}$ satisfy 
the heat equation so that  the same H\"{o}lder estimates hold. Since
$$u(\cdot,t)-e^{t\Delta}u_{0}\big|_{t=0}=0, \ F(\cdot,t)-e^{t\Delta}F_{0}\big|_{t=0}=0, $$ 
it follows that 
$$
\big|u(x,t)-e^{t\Delta}u_{0}(x)\big|+\big|F(x,t)-e^{t\Delta}F_{0}(x)\big|
\leq C(\gamma, C_{*}) t^{\frac{\gamma}{2}}, \forall x\in {\B}_{\frac{1}{9}}(x_{0}),\  0\leq t\leq T_{1}.
$$
Since $u, F, e^{t\Delta} u_0, e^{t\Delta F_0}$ are all self-similar, this immediately implies that 
$$
|\mathcal U(y)-e^{\Delta}u_{0}(y)|+|\mathcal F(y)-e^{\Delta}F_{0}(y)|\leq\frac{C}{(1+|y|^{2})^{\frac{1+\gamma}{2}}}
$$
holds  for any  $|y|>\frac{2}{\sqrt{T_{1}}}.$

On the other hand, since $u_0, F_0$ are homogeneous of degree $-1$, it follows that 
$$\alpha:=\|u_{0}\|_{L^{2}_1(\R^3)}^2+\|F_{0}\|_{L^{2}_1(\R^3)}^2<+\infty.$$
Hence  by Lemma \ref{L3.1} there exists $T_1=T_1(\alpha)>0$ such that 
\begin{equation}\label{S3.2}
\sup_{0<t<T_1}\int_{\B_1}(|u(x,t)|^2+|F(x,t)|^2)
+\int_{0}^{T_1}\int_{\B_1}(|\nabla u(x,t)|^2+|\nabla F(x,t)|^2)\leq C\alpha.
\end{equation}
Since $u,F$ are self-similar, it follows from direct calculations and (\ref{S3.2}) that for a fixed $t_*<T_1$, to be determined later,
\begin{align*}
&\sqrt{t_*}\int_{\B_{\frac{1}{\sqrt{t_*}}}}\Big(|\mathcal U|^2+|\mathcal F|^2+|\nabla \mathcal U|^2+|\nabla \mathcal F|^2\Big)\\
&\leq C\int_{\B_1}\Big(|u(x,t_*)|^2+|F(x,t_*)|^2\Big)+
C\int_0^{t_*}\int_{\B_1}\Big(|\nabla u(x,t)|^2+|\nabla F(x,t)|^2\Big)\\
&\le C(\alpha).
\end{align*}
This implies, by choosing $t_{*}=\frac{T_1}{16}$, that
$$
\int_{\B_{\frac{4}{\sqrt{T_{1}}}}}\Big(|\nabla \mathcal U|^{2}+|\nabla \mathcal F|^{2} 
+|\mathcal U|^{2}+|\mathcal F|^{2}\Big)\leq C(T_1).
$$
Since $(\mathcal U,\mathcal F)$ solves the  elliptic system 
\begin{align}\label{S3.3}
\left\{
\begin{array}{llll}
-\Delta \mathcal U-\frac{1}{2}\mathcal U-\frac{1}{2}(x\cdot\nabla)\mathcal U
+\sigma (\mathcal U\cdot\nabla) \mathcal U-\sigma (\mathcal F_{l}\cdot\nabla)\mathcal F_{l}+\nabla \mathcal P=0,\\\\
{\rm{div}} \mathcal U=0,\ {\rm{div}} \mathcal F=0,\\\\
-\Delta \mathcal F_{j}-\frac{1}{2}\mathcal F_{j}-\frac{1}{2}(x\cdot \nabla) \mathcal F_{j}
+\sigma (\mathcal U\cdot\nabla)\mathcal F_{j}-\sigma (\mathcal F_{j}\cdot\nabla)\mathcal U=0, \ j=1, 2, 3,
\end{array}
\right.
\end{align}
in $\B_{\frac{4}{\sqrt{T_1}}}$,  we have, by the standard regularity theory of elliptic system in dimension three,
that for any $k\ge 0$, 
$$
\big\|\mathcal U\big\|_{C^{k}(\B_{\frac{2}{\sqrt{T_1}}})}+\big\|\mathcal F\big\|_{C^{k}(\B_{\frac{2}{\sqrt{T_{1}}}})}\leq C(k,\gamma, C_*).
$$
Combining these two estimates yields (\ref{S3.1}). This completes the proof. 
\end{proof}

\subsection{Proof of main result}
In this subsection, we will prove Theorem \ref{T1.1} by the Leray-Schauder fixed point theorem. We begin with some well-known results
on the non-stationary Stokes system in $\R^{3}$ with a force tensor $f=(f_{ij})$
\begin{equation}\label{E3.7}
\begin{cases}
\partial_{t}v-\Delta v+\nabla p=\nabla\cdot f, \\
\nabla\cdot v=0,\\
v(\cdot,0)=0,
\end{cases} \ {\rm{in}}\ \R^3\times (0,\infty),
\end{equation}
where $(\nabla\cdot f)_{j}=\partial_{k}f_{kj}$ for $1\le j\le 3$. If $f$ has sufficient decay near infinity, the solution of (\ref{E3.7}) is given by $v=\Phi f$, where
\begin{equation}\label{E3.8}
(\Phi f)_{i}(x,t)=\int_{0}^{t}\int_{\R^{3}}\partial_{x_{k}}S_{ij}(x-y,t-s)f_{kj}(y,s)dyds, \ i=1,2,3,
\end{equation}
and $S=(S_{ij})$ is the fundamental solution of the non-stationary Stokes system in $\R^{3}$ (see \cite{O,S}), called as
the Oseen kernel, given by
$$
S_{ij}(x,t)=\Big(-\delta_{i,j}\Delta+\frac{\partial^{2}}{\partial x_{i}\partial x_{j}}\Big)\frac{1}{4\pi}\int_{\R^{3}}\frac{\Gamma(y,t)}{|x-y|}dy,\ (x,t)\in\R^3\times (0,\infty), 
$$
where $\displaystyle\Gamma(x,t)=(4\pi t)^{-\frac{n}{2}}e^{-\frac{x^{2}}{4t}}$ is the heat kernel. It is well-known (see \cite{S}) that
$$
|D_{x}^{l}\partial_{t}^{k}S(t,x)|\leq C_{k,l}\Big(|x|+\sqrt{t}\Big)^{-3-l-2k},\ \ l,k\geq 0,
$$
where $D_{x}^{l}$ denotes the $l$ order derivatives with respect to the variable $x$.

Now we recall some useful integral estimates which play key roles in the proof of our main results.



\begin{Lemma}\label{C3.1}{\rm\cite{JS,T}}
For $0\le\alpha<1$, suppose $\displaystyle |f(x,t)|\leq \frac{1}{t}\bigg(\frac{\sqrt{t}}{|x|+\sqrt{t}}\bigg)^{2+\alpha}$ in $\R^3\times (0,\infty)$. Then
$$
\big|\Phi f (x,t)\big|\lesssim \frac{1}{\sqrt{t}}\bigg(\frac{\sqrt{t}}{|x|+\sqrt{t}}\bigg)^{2+\alpha},\ \ \forall (x,t)\in \R^3\times (0,\infty).
$$
\end{Lemma}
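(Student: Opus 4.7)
The plan is to first exploit the natural parabolic scaling of the problem, then reduce to a two-parameter integral estimate that is standard for kernels with this type of homogeneity.

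First, I would observe the scaling symmetry. For $\lambda>0$, set $f_\lambda(x,t)=\lambda^{2}f(\lambda x,\lambda^{2}t)$. A direct change of variables shows $\Phi f_\lambda(x,t)=\lambda\,\Phi f(\lambda x,\lambda^{2}t)$, and both the hypothesis and the desired conclusion are invariant under $(x,t)\mapsto(\lambda x,\lambda^{2}t)$ with the weight $\tfrac{\sqrt{t}}{|x|+\sqrt{t}}$. Hence it suffices to prove the estimate at $t=1$, i.e.\
$$
|\Phi f(x,1)|\lesssim \frac{1}{(1+|x|)^{2+\alpha}},\qquad x\in\mathbb{R}^{3}.
$$

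Second, using the pointwise bound $|\nabla_{x}S(x,t)|\le C(|x|+\sqrt{t})^{-4}$ recalled just before the lemma and inserting the assumed bound on $f$, I would pass to the majorant
$$
|\Phi f(x,1)|\le C\int_{0}^{1}\!\!\int_{\mathbb{R}^{3}}\frac{s^{\alpha/2}}{(|x-y|+\sqrt{1-s})^{4}\,(|y|+\sqrt{s})^{2+\alpha}}\,dy\,ds.
$$
The spatial integral would be handled by the standard three-region splitting $\{|y-x|\le \tfrac{|x|}{2}\}$, $\{|y|\le\tfrac{|x|}{2}\}$, and the remainder: in the first region one uses that $|y|+\sqrt{s}\gtrsim |x|+\sqrt{s}$ and the kernel $(|x-y|+\sqrt{1-s})^{-4}$ is integrable in $y$ with gain a $(\sqrt{1-s})^{-1}$; in the second region the roles are reversed; in the complement both weights are comparable to $|x|$, and one again integrates the heat-type kernel. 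Combining, one obtains a bound of the shape
$$
\int_{\mathbb{R}^{3}}\frac{dy}{(|x-y|+\sqrt{1-s})^{4}(|y|+\sqrt{s})^{2+\alpha}}\lesssim \frac{1}{\sqrt{1-s}\,(|x|+\sqrt{s}+\sqrt{1-s})^{2+\alpha}},
$$
valid because $2+\alpha<3$ keeps the $y$-integration convergent at infinity and $\alpha<1$ keeps the local singularities tame.

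Third, I would integrate in $s$. Splitting $[0,1]=[0,1/2]\cup[1/2,1]$, on $[0,1/2]$ the factor $\sqrt{1-s}$ is bounded below and the $s$-integral behaves like $\int_{0}^{1/2}s^{\alpha/2}(|x|+\sqrt{s}+1)^{-(2+\alpha)}\,ds$, which is controlled by $(1+|x|)^{-(2+\alpha)}$; on $[1/2,1]$ the $s^{\alpha/2}$ factor is bounded, and the singularity $(\sqrt{1-s})^{-1}$ is integrable, giving the same majorant. Putting these together yields the required pointwise bound at $t=1$, which, after undoing the scaling, completes the proof.

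The main obstacle I anticipate is the book-keeping in the spatial integral: one must track the three regimes so that the resulting weight is precisely $(|x|+\sqrt{s}+\sqrt{1-s})^{-(2+\alpha)}$ with an extra $(\sqrt{1-s})^{-1}$ factor, since any loss here would destroy the borderline integrability in $s$. The constraint $\alpha<1$ enters exactly at this step, both to ensure that $(|y|+\sqrt{s})^{-(2+\alpha)}$ is locally integrable near $y=0$ and to avoid a logarithmic divergence in the time integration near $s=1$.
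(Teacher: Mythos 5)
Your overall strategy is sound and follows the standard route for this kind of kernel estimate (the paper itself offers no proof, only citing [JS, T]): reduce to $t=1$ by parabolic scaling, replace $\nabla_x S$ by the quartic majorant $(|x-y|+\sqrt{t-s})^{-4}$, estimate the resulting space integral, then integrate in time. The scaling argument is correct, the pointwise reduction is correct, the form of the claimed space integral bound
$$
\int_{\R^{3}}\frac{dy}{(|x-y|+\sqrt{1-s})^{4}(|y|+\sqrt{s})^{2+\alpha}}\lesssim \frac{1}{\sqrt{1-s}\,(|x|+\sqrt{s}+\sqrt{1-s})^{2+\alpha}}
$$
is indeed true, and the time integration (since $\sqrt{s}+\sqrt{1-s}\ge 1$ and $\int_{0}^{1}s^{\alpha/2}(1-s)^{-1/2}\,ds<\infty$) then finishes the argument.

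However, your derivation of the space integral bound via the three-region splitting $\{|y-x|\le|x|/2\}$, $\{|y|\le|x|/2\}$, and the remainder does not actually produce the weight $(|x|+\sqrt{s}+\sqrt{1-s})^{-2-\alpha}$ you claim. In the first region you use $|y|+\sqrt{s}\gtrsim|x|+\sqrt{s}$ together with $\int_{\R^{3}}(|z|+\sqrt{1-s})^{-4}\,dz\simeq(\sqrt{1-s})^{-1}$, giving the bound $\frac{1}{\sqrt{1-s}\,(|x|+\sqrt{s})^{2+\alpha}}$. This is strictly larger than the target whenever $\sqrt{1-s}\gg|x|+\sqrt{s}$, and if you carry it through to the time integral you get, for $|x|<1$,
$$
\int_{0}^{1/2}\frac{s^{\alpha/2}\,ds}{(|x|+\sqrt{s})^{2+\alpha}}\simeq 1+\log\frac{1}{|x|},
$$
which blows up as $|x|\to0$ instead of being bounded. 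So the conclusion $|\Phi f(x,1)|\lesssim(1+|x|)^{-2-\alpha}$ is not reached near $x=0$ with your stated region bounds. The fix is local: when $|x|\lesssim\sqrt{1-s}$, the kernel is nearly constant $\simeq(1-s)^{-2}$ on the ball $\{|y-x|\le|x|/2\}$, so the correct bound there is $\simeq|x|^{3}(1-s)^{-2}$, not $(\sqrt{1-s})^{-1}$; inserting $\min\bigl((\sqrt{1-s})^{-1},\,|x|^{3}(1-s)^{-2}\bigr)$ eliminates the logarithm. Alternatively, base the decomposition on $L=|x|+\sqrt{s}+\sqrt{1-s}$ (regions $|y-x|\le L/4$, $|y|\le L/4$, complement), for which the crude one-line bounds you use do give the stated estimate cleanly. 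The constraint $\alpha<1$ is used exactly where you say, but the lemma as you've written it has this real, though readily reparable, gap for small $|x|$.
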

\vskip 0.2in
The following Lemma shows that $\Phi f (x,t)$ is H\"{o}lder continuous in space and time, provided that $f$ has sufficient decay near infinity.

\begin{Lemma}\label{L3.5}{\rm{\cite{T}}}
Suppose $\displaystyle |f(x,t)|\leq (|x|+\sqrt{t})^{-2}$ in $\R^3\times (0,\infty)$. Then $\Phi f$ is locally H\"{o}lder continuous in $x$ and $t$ with any exponent
$0<\theta<1$. \qed
\end{Lemma}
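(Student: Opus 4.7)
The plan is to establish local Hölder continuity directly from the representation formula for $\Phi f$ using the pointwise decay estimates on the Oseen kernel $|D_x^l \partial_t^k S(x,t)| \lesssim (|x|+\sqrt{t})^{-3-l-2k}$ given just before Lemma \ref{C3.1}. Since the claim is local, I fix a compact set $K = \overline{\B}_R(x_0) \times [a,b] \subset \R^3 \times (0,\infty)$ and prove uniform Hölder estimates on $K$. It suffices to bound the spatial and temporal differences separately, controlling $|\Phi f(x_1,t) - \Phi f(x_2,t)|$ by $Ch^\theta$ with $h=|x_1-x_2|$, and $|\Phi f(x,t_1) - \Phi f(x,t_2)|$ by $C\tau^{\theta/2}$ with $\tau=|t_1-t_2|$, for any prescribed $\theta \in (0,1)$.

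For spatial Hölder continuity, let $h = |x_1 - x_2|$ with $(x_i,t)\in K$, and split
\begin{equation*}
\Phi f(x_1,t) - \Phi f(x_2,t) = I_{\rm{near}} + I_{\rm{far}},
\end{equation*}
where $I_{\rm{near}}$ is the integral over $s \in (t - h^2, t)$ (computed separately at each $x_i$ and then differenced), while $I_{\rm{far}}$ is the integral over $s \in (0, t - h^2)$ with integrand the difference of $\nabla_x S$ at the two spatial points. For $I_{\rm{near}}$, use the pointwise bounds $|\nabla_x S(z,\sigma)| \lesssim (|z|+\sqrt\sigma)^{-4}$ and $|f(y,s)| \le (|y|+\sqrt s)^{-2}$ and integrate directly. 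For $I_{\rm{far}}$, apply the mean value theorem along the segment $x_\xi = (1-\xi) x_1 + \xi x_2$ to trade the spatial difference for a factor of $h$ times $\nabla_x^2 S$, whose worse singularity $(|z|+\sqrt\sigma)^{-5}$ is tamed by the constraint $\sqrt{t-s} \ge h$. A dyadic decomposition of $\R^3$ separating the regions $|y| \lesssim \sqrt s$ and $|y| \gg \sqrt s$, combined with polar coordinates around $x_\xi$, shows that both pieces are bounded by $Ch^\theta$ for every $\theta < 1$.

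For temporal Hölder continuity, apply the parallel split: assuming $t_1 < t_2$ and setting $\tau = t_2 - t_1$, write
\begin{equation*}
\Phi f(x,t_2) - \Phi f(x,t_1) = \int_{t_1}^{t_2}\!\!\int_{\R^3} \nabla_x S(x-y, t_2-s) f(y,s)\,dy\,ds + \int_0^{t_1}\!\!\int_{\R^3} \big[\nabla_x S(x-y, t_2-s) - \nabla_x S(x-y, t_1-s)\big] f(y,s)\,dy\,ds.
\end{equation*}
The first term is controlled directly by the $\nabla_x S$ bound. For the second, the time mean value theorem gives a factor of $\tau$ times $\partial_t \nabla_x S$, which decays like $(|z|+\sqrt\sigma)^{-6}$. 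Restricting to $\sqrt{t_1 - s} \ge \sqrt{\tau}$ in the difference piece and using the same dyadic integration strategy yields a bound of order $\tau^{\theta/2}$ for any $\theta < 1$, which is exactly the parabolic scaling.

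The main obstacle is controlling $I_{\rm{far}}$ (and its time analog): the interaction between the kernel singularity $(|z|+\sqrt\sigma)^{-5}$ and the source decay $(|y|+\sqrt s)^{-2}$ is borderline, and the $s$-integration together with the innermost dyadic annulus in $y$ produces a logarithm at the critical scale. Tracking this logarithm carefully shows that it can be absorbed into $h^{\theta}$ (or $\tau^{\theta/2}$) for every $\theta < 1$, but it obstructs the exponent $\theta = 1$ — which is precisely why the statement asserts Hölder continuity for every $\theta \in (0,1)$ but not Lipschitz regularity.
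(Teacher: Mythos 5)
The paper offers no proof here: Lemma~\ref{L3.5} is quoted directly from Tsai \cite{T} (note the \verb|\qed| immediately after the statement), so there is nothing internal to compare your argument against. Your sketch is, however, the standard kernel-estimate proof and it is correct. The near/far split in the time variable (cutting at $s=t-h^2$ in the spatial case and at $s=t_1-\tau$ in the temporal case), followed by the mean value theorem applied to $\nabla_x S$ (resp.\ $\partial_t\nabla_x S$) on the far piece, combined with the Oseen kernel bounds stated just before Lemma~\ref{C3.1}, is exactly the right strategy, and Lemma~\ref{C3.1} with $\alpha=0$ already guarantees $\Phi f$ is well defined and bounded.

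A few technical remarks, none of which affect the conclusion. You do not actually need a dyadic decomposition in $y$: the scaling identity $\int_{\R^3}(|z|+\sqrt{\sigma})^{-3-k}\,dz\sim\sigma^{-k/2}$ (for $k>0$) already makes every $y$-integral converge, and on the region $s\ge t/2$ the crude bound $(|y|+\sqrt{s})^{-2}\le s^{-1}\lesssim 1$ suffices; the decay of $f$ at spatial infinity is only needed on the early piece $s<t/2$, where the kernel, not the source, supplies integrability at $|y|\to\infty$. Second, the logarithm at the critical scale comes purely from the kernel: it is $\int_{t/2}^{t-h^2}(t-s)^{-1}\,ds\sim\log(1/h)$, the $(t-s)^{-1}$ being the $y$-integral of $(|z|+\sqrt{t-s})^{-5}$, not a ``borderline interaction'' between kernel and source. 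Third, the temporal direction in fact yields a clean $\tau^{1/2}$ bound with no logarithm, i.e.\ parabolic Lipschitz in $t$; the obstruction at $\theta=1$ is genuinely only present in the spatial difference $I_{\rm far}$. These are minor misattributions in the narrative, not gaps: the estimates you outline all close, and the proof is sound.
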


Finally we recall the following Liouville type property. Denote 
$$\langle x\rangle=\sqrt{1+|x|^2}\ \ {\rm{for}}\ \  x\in\R^3.$$
\begin{Lemma}\label{L3.6} \cite{T}
If $v(x,t):\R^3\times (0,\infty)\to\R^{3}$ is a solution of the Stokes equation (\ref{E3.7}), with $f\equiv 0$, for some distribution $p$,
and satisfies 
$$|v(x,t)|\leq {Ct^{-\frac{1}{2}}}\langle\frac{x}{\sqrt{t}}\rangle^{-(1+\gamma)} \ \ \mbox{for some}\ \ 0<\gamma<1,$$
then $v\equiv0$ in $\R^3\times (0,\infty)$.
\end{Lemma}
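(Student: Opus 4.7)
The strategy is to show that the pressure gradient vanishes identically, reducing the Stokes system to the pure heat equation, and then to conclude $v\equiv 0$ by uniqueness for the heat equation under the given decay.

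First, taking divergence of $\partial_t v-\Delta v+\nabla p=0$ and using $\mathrm{div}\,v=0$ gives $\Delta p(\cdot,t)=0$ on $\R^{3}$ for every $t>0$. Hence each component of $\nabla p(\cdot,t)$ is harmonic in $x$. The next step is to obtain a pointwise decay estimate for $\nabla p$. For $(x_{0},t_{0})$ with $t_{0}>0$, set $r=\tfrac{1}{4}\sqrt{t_{0}}$. Interior parabolic estimates for the Stokes system on $Q_{r}(x_{0},t_{0})$ (of the type used in Lemma \ref{L2.1} and Lemma \ref{L2.2}, which are especially clean here because $p$ itself is spatially harmonic so that the $L^{p}$-estimate for the pressure reduces to the mean-value property), combined with the hypothesis $|v|\le Ct_{0}^{-1/2}\langle x_{0}/\sqrt{t_{0}}\rangle^{-(1+\gamma)}$ on $Q_{r}(x_{0},t_{0})$, should yield
$$|\nabla p(x_{0},t_{0})|\le C\,t_{0}^{-3/2}\,\langle x_{0}/\sqrt{t_{0}}\rangle^{-(1+\gamma)}.$$
For each fixed $t_{0}>0$ this tends to $0$ as $|x_{0}|\to\infty$, so $\nabla p(\cdot,t_{0})$ is a harmonic function on $\R^{3}$ vanishing at infinity. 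Liouville's theorem then forces $\nabla p(\cdot,t_{0})\equiv 0$ for every $t_{0}>0$.

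Consequently $v$ solves the heat equation $\partial_{t}v-\Delta v=0$ on $\R^{3}\times(0,\infty)$. For any $0<s<t$ the decay places $v(\cdot,s)\in L^{q}(\R^{3})$ for every $q>3/(1+\gamma)$, which justifies the heat-kernel representation
$$v(x,t)=\int_{\R^{3}}\Gamma(x-y,t-s)\,v(y,s)\,dy.$$
Splitting the region of integration into $\{|y|\le\sqrt{s}\}$, where $|v(y,s)|\le Cs^{-1/2}$ and the volume is $O(s^{3/2})$, and $\{|y|\ge\sqrt{s}\}$, where $|v(y,s)|\le Cs^{\gamma/2}|y|^{-(1+\gamma)}$, both contributions are controlled by quantities that vanish as $s\to 0^{+}$ (the first by $Cs/(t-s)^{3/2}$, the second by $Cs^{\gamma/2}$ times a convolution integral bounded uniformly in $s$). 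Sending $s\to 0^{+}$ gives $v(x,t)=0$ for every $(x,t)\in\R^{3}\times(0,\infty)$.

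The main obstacle is the decay estimate for $\nabla p$ in the second step: from only a pointwise bound on $v$ one must extract a pointwise bound on $\nabla p$, which is not a direct consequence of parabolic Schauder theory because $\nabla p$ plays the role of the forcing. The harmonicity of $p$ is what saves the argument, since it lets one pass from integral bounds on $p$ (obtainable by testing the equation against suitable cutoffs) to pointwise bounds via the mean-value property, followed by a scaling argument adapted to the self-similar scale $r\sim\sqrt{t_{0}}$. A slicker alternative, sidestepping the pointwise pressure estimate, is to apply the Leray projector $\mathbb{P}$ directly to the Stokes system in the distributional sense: since $\mathbb{P}\nabla p=0$, $\mathbb{P}v=v$, and $\mathbb{P}$ commutes with $\partial_{t}$ and $\Delta$, one obtains $\partial_{t}v-\Delta v=0$ immediately, provided the decay is used to place $v$ in an $L^{q}$-class on which $\mathbb{P}$ is bounded.
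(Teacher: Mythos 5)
The paper cites Tsai \cite{T} for this Liouville-type lemma and does not reproduce a proof, so there is no internal argument to compare against; I therefore assess your proposal on its own. Your overall plan (eliminate $\nabla p$, reduce to the heat equation, send $s\to 0^+$ in the heat-kernel representation) is sound, and your Step~4 — the representation $v(x,t)=\int\Gamma(x-y,t-s)v(y,s)\,dy$ together with the split of the $y$-integral at $|y|\sim\sqrt{s}$ — is carried out correctly. The genuine gap is in passing from harmonicity of $p(\cdot,t)$ to $\nabla p\equiv 0$. Your main mechanism, ``interior parabolic estimates for the Stokes system of the type used in Lemma~\ref{L2.1} and Lemma~\ref{L2.2},'' cannot be invoked here: those estimates take $p\in L^{3/2}(Q_1)$ as a hypothesis (see \eqref{E2.2.2}), and in the present lemma $p$ is an arbitrary distribution with no integrability assumed, so there is nothing to feed the Caccioppoli/bootstrap machinery. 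Producing a pointwise decay bound for $\nabla p$ and then applying Liouville is therefore not available by the route you sketch.

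The mean-value-property idea you mention in passing is the correct fix, but it should be used directly, not via a pointwise estimate. Test the equation against $\phi(y,s)=\psi_r(y-x_0)\,\eta(s)$ with $\eta\in C_c^\infty((0,\infty))$ and $\psi_r(z)=r^{-3}\psi(z/r)$ a radial bump with $\int\psi=1$. Since $\Delta_x\langle p,\eta\rangle_s=0$ in $\mathcal D'(\R^3)$, Weyl's lemma makes $\langle\nabla p,\eta\rangle_s$ a genuine harmonic function of $x$, so the mean-value property gives $\langle\nabla p,\phi\rangle=\langle\nabla p,\eta\rangle_s(x_0)$, a quantity independent of $r$. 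The right-hand side equals $\langle v,\partial_t\phi\rangle+\langle v,\Delta\phi\rangle$, and both pairings are bounded by $\sup_{s\in\operatorname{supp}\eta}\sup_{|z|\le 1}|v(x_0+rz,s)|$ times fixed constants; by the hypothesis on $v$ this supremum tends to $0$ as $r\to\infty$, so $\langle\nabla p,\eta\rangle_s(x_0)=0$ for all $x_0$ and $\eta$, i.e.\ $\nabla p\equiv 0$. An even shorter alternative — and likely closest to Tsai's argument — is to take the curl: $\omega=\nabla\times v$ satisfies $\partial_t\omega=\Delta\omega$ in $\mathcal D'$ with no pressure term at all, and your $s\to 0^+$ argument applied to the integrated-by-parts representation $\omega(x,t)=-\int\nabla_y\Gamma(x-y,t-s)\times v(y,s)\,dy$ gives $\omega\equiv 0$; then $v(\cdot,t)$ is divergence-free and curl-free, hence harmonic, and vanishes by Liouville using the spatial decay. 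Your ``slicker alternative'' via the Leray projector suffers from the same difficulty you are trying to avoid: with $p$ only a distribution, $\nabla p$ may grow polynomially, and $\mathbb P\nabla p=0$ is then not a licensed identity.
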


\noindent{\bf Proof of Theorem \ref{T1.1}}. For $(x,t)\in\R^3\times (0,\infty)$, set
$$
U^{0}(x,t):=(e^{t\Delta}u_{0})(x),\ \  G^{0}(x,t):=(e^{t\Delta}F_{0})(x).
$$
From the assumptions on $u_{0}$ and $F_{0},$ we have that $ U_{0}$ and $G_{0}$ are self-similar solutions to the heat equation, and satisfy
$$
|U^{0}(x,t)|+|G^{0}(x,t)|\leq C\big(|x|^{2}+t\big)^{-\frac12}, \  (x,t)\in\R^3\times (0,\infty).
$$
For $\sigma\in [0,1]$, we look for a family of self-similar solutions ($u_\sigma, F_\sigma $)
to the system (1.3)$_\sigma$ and (\ref{IC1.1}) of the form
\begin{align*}
&u_\sigma(x,t)=U^{0}(x,t)+v_\sigma(x,t),\  u_\sigma(x,0)=u_{0}(x);\\&  
F_\sigma(x,t)=G^{0}(x,t)+H_\sigma(x,t),\ F_\sigma(x,0)=F_{0}(x).
\end{align*}
In particular, $v_\sigma$ and $H_\sigma$ are also self-similar, i.e.,
\begin{align}\label{E3.14}
\begin{split}
&v_\sigma(x,t)=\frac{1}{\sqrt{t}}v_\sigma\Big(\frac{x}{\sqrt{t}} ,1\Big)=\frac{1}{\sqrt{t}}\widehat{v}_\sigma\Big(\frac{x}{\sqrt{t}}\Big),\\&
 H_\sigma(x,t)=\frac{1}{\sqrt{t}}H_\sigma\Big(\frac{x}{\sqrt{t}} ,1\Big)
 =\frac{1}{\sqrt{t}}\widehat{H}_\sigma\Big(\frac{x}{\sqrt{t}}\Big).
\end{split}
\end{align}
For $x\in\R^3$ and $t\in (0,\infty)$, set
$$q_\sigma(x,t)= \frac{1}{t}\widehat{q}_\sigma\big(\frac{x}{\sqrt{t}}\big),$$
where
$$
\widehat{q}_\sigma(x)=\big(U^{0}(x,1)+\widehat{v}_\sigma(x)\big)\otimes \big(U^{0}(x,1)+\widehat{v}_\sigma(x)\big)
+\big(G^{0}(x,1)+\widehat{H}_{\sigma}(x)\big)\big(G^{0}(x,1)+\widehat{H}_{\sigma}(x)\big)^t.
$$
Similarly, set
\begin{align*}
Q^\sigma_j(x,t)=\frac{1}{t}\widehat{Q}_{j}^\sigma(\frac{x}{\sqrt{t}}),\ \ 1\le j\le 3,
\end{align*}
where
\begin{align*}
\widehat{Q}_{j}^\sigma(x)=&-(U^{0}(x,1)+\widehat{v}_\sigma(x))\otimes(G^0_j(x,1)+(\widehat{H}_\sigma)_{j}(x))\\
&+(G^0_j(x,1)+(\widehat{H}_\sigma)_{j}(x))\otimes(U^{0}(x,1)+\widehat{v}_\sigma(x)), \ j=1, 2, 3.
\end{align*}
It is readily seen that $(v_\sigma, H_\sigma)$, for $0\le\sigma\le 1$, solves the following equations:
\begin{align}\label{E3.9}
\left\{\displaystyle
\begin{array}{llll}
\partial_{t}v_\sigma-\Delta v_\sigma+\nabla p_\sigma
=\sigma\nabla\cdot q_\sigma(x,t)\ \Big(=\sigma\nabla\cdot\big(\frac{1}{t}\widehat{q}_\sigma(\frac{x}{\sqrt{t}})\big)\Big), \\
\displaystyle\partial_{t}H_\sigma-\Delta H_\sigma=\sigma\sum_{j=1}^3\nabla\cdot Q^\sigma_j(x,t)\ 
\Big(=\sigma\sum_{j=1}^3\nabla\cdot\big(\frac{1}{t}\widehat{Q}^\sigma_j(\frac{x}{\sqrt{t}})\big)\Big), \\
\mbox{div} v_\sigma=\mbox{div} H_\sigma=0,\\
v_\sigma(x,0)=0, H_\sigma(x,0)=0.
\end{array}
\right.
\end{align}
Inspired by \cite{JS, T}, we aim to find a self-similar solution to the system (\ref{E1.2+}) by the Leray-Schauder fixed
point theory, which is slightly different from \cite{JS}. To do it, define, for $\gamma>0$, 
the Banach space $X_{\gamma}$ by
$$
X_\gamma=\Big\{u\in C(\R^{3},\R^{3})\ \big|\ \mbox{div}\ u=0,\ \|u\|_{X_\gamma}<\infty\Big\},
$$
where
$$
\big\|u\big\|_{X_\gamma}:=\sup_{x\in\R^{3}}\ \langle x \rangle^{1+\gamma}|u(x)|.
$$
Set $X_\gamma^{4}=\Big\{(u_1,\cdots,u_4)\ \big|\ u_i\in X_\gamma, i=1,2,3,4\Big\}$, which is equipped with the norm 
$$
\big\|u\big\|_{X_\gamma^{4}}=\sum_{j=1}^{4}\big\|u_{j}\big\|_{X_\gamma}.
$$
Observe that for any $\big(\ \widehat{v}_\sigma,\widehat{H}_\sigma\ \big)\in X_\gamma^4$, we have that
$\big(\ \widehat{q}_\sigma, \widehat{Q}^\sigma_1, \widehat{Q}^\sigma_2, \widehat{Q}^\sigma_3\big)\in X^4_{1+2\gamma}$, and  
\begin{align*}
\big\|(\ \widehat{q}_\sigma, \widehat{Q}^\sigma_1,\widehat{Q}^\sigma_2, \widehat{Q}^\sigma_3)\big\|_{X^4_{1+2\gamma}}
\leq C\Big[1+\big\|(\ \widehat{v}_\sigma, \widehat{H}_\sigma)\big\|_{X^4_{\gamma}}^2\Big].
 \end{align*}
In particular, we have that
\begin{align}\label{E3.10}
|q_\sigma(x,t)|+\sum_{j=1}^3|Q^\sigma_j(x,t)|\leq \frac{C}{|x|^{2}+t}\Big[1+\big\|(\widehat{v}_\sigma,\widehat{H}_\sigma)\big\|_{X_\gamma^4}^2\Big],\ \mbox{for}\ \ (x,t)\in \R^{3}\times (0,\infty).
\end{align}
By Lemma \ref{C3.1} and Lemma \ref{L3.6},  we conclude that for any given $\sigma\in [0,1]$ and $\big(\ \widehat{v}_\sigma, \widehat{H}_\sigma\ \big)\in X_\gamma^4$,
the system (\ref{E3.9}) has a unique, self-similar solution 
$$(v_\sigma, H_\sigma)=\Big(K_0(\widehat{v}_\sigma, \widehat{H}_\sigma,\sigma), 
K_1(\widehat{v}_\sigma, \widehat{H}_\sigma,\sigma)\Big),$$
where
$$
v_\sigma(\cdot,t)=K_0(\widehat{v}_\sigma, \widehat{H}_\sigma,\sigma)(\cdot,t)
=\sigma\int_{0}^{t}e^{-\Delta(t-\tau)}\mathbb P\nabla\cdot\Big(\tau^{-1}\widehat{q}_\sigma(\frac{\cdot}{\sqrt{\tau}})\Big)d\tau, 
=\Phi(\sigma q_\sigma)(t), 
$$
and
$$
H_\sigma(\cdot,t)=K_1(\widehat{v}_\sigma, \widehat{H}_\sigma,\sigma)(\cdot,t)
=\sigma\sum_{j=1}^3\int_{0}^{t}e^{-\Delta(t-\tau)}\mathbb P\nabla\cdot\Big(\tau^{-1}\widehat{Q}^\sigma_j(\frac{\cdot}{\sqrt{\tau}})\Big)d\tau
=\sum_{j=1}^3\Phi(\sigma Q_j^\sigma)(t),
$$
where $\Phi$ is given by (\ref{E3.8}), and $\mathbb P$ is the Leray projection operator. 
Moreover, $(v_\sigma,H_\sigma)$ satisfies the estimate
\begin{equation}\label{E3.11}
 |v_\sigma(x,t)|+|H_\sigma(x,t)|\leq Ct^{-\frac12}\langle \frac{x}{\sqrt t}\rangle^{-2} 
 \Big[1+\big\|(\widehat{v}_\sigma,\widehat{H}_\sigma)\big\|_{X_\gamma^4}^2\Big], \ \mbox{for}\ \ (x,t)\in \R^{3}\times (0,\infty),
\end{equation}
Since $(v_\sigma, H_\sigma)$ is self-similar, we can write
$$(v_\sigma, M_\sigma)(x,t)=\frac{1}{\sqrt{t}}\Big(\widetilde{v}_\sigma, \widetilde{H}_\sigma\Big)\big(\frac{x}{\sqrt{t}}\big), \ (x,t)\in\R^3\times (0,\infty).$$
It follows directly from (\ref{E3.11}) that $\big(\widetilde{v}_\sigma, \widetilde{H}_\sigma\big)\in X^4_{1}$. Furthermore, we have that
\begin{equation}\label{E3.12}
\Big(\widetilde{v}_\sigma, \widetilde{H}_\sigma\Big)(x)=\Big(K_0(\widehat{v}_\sigma,\widehat{H}_\sigma,\sigma), K_1(\widehat{v}_\sigma, \widehat{H}_\sigma, \sigma)\Big)(x,1), \ x\in \R^{3}.
\end{equation}
Define the operator $\mathbb T$ on  $X_{\gamma}^4\times [0,1]$ by
$$\mathbb T(\widehat{v}_\sigma,\widehat{H}_\sigma; \sigma)(x)=\Big(K_0(\widehat{v}_\sigma,\widehat{H}_\sigma,\sigma), K_1(\widehat{v}_\sigma, \widehat{H}_\sigma, \sigma)\Big)(x,1), \ x\in\R^3.$$
From (\ref{E3.12}), we have
\begin{equation}\label{E3.13}
\Big\|\mathbb T(\widehat{v}_\sigma,\widehat{H}_\sigma; \sigma)\Big\|_{X^4_1}\le 
C\Big\{1+\Big\|(\widehat{v}_\sigma, \widehat{H}_\sigma)\Big\|_{X_\gamma^4}^2\Big\}.
\end{equation}
For $0<\gamma<1$, since $X_1^4$ is compactly embedded into $X_{\gamma}^4$ , we have from (\ref{E3.13}) that $\mathbb T$ is a compact operator from
$X_{\gamma}^4\times [0,1]$ to $X_{\gamma}^4$. 
It is clear that (\ref{E3.12}) can be rewritten as 
\begin{equation}\label{E3.15}
\Big(\widetilde{v}_\sigma, \widetilde{H}_\sigma\Big)=\mathbb T(\widehat{v}_\sigma,\widehat{H}_\sigma; \sigma).
\end{equation}
Now we want to apply the Leray-Schauder fixed point theorem to show that 
$\mathbb T(\cdot, \cdot;  1)$ has a fixed point $(\widehat{v}_1,\widehat{H}_1)\in X_\gamma^4$, i.e., 
$$
\Big(\widehat{v}_1, \widehat{H}_1\Big)=\mathbb T(\widehat{v}_1,\widehat{H}_1; 1).
$$
In fact, it follows from the definition of $\mathbb T$ and (\ref{E3.13}) that\\
(i) $\mathbb T(v, H; 0)=0$ for any $(v, H)\in X_\gamma^4$;\\
(ii) $\mathbb T: X_\gamma^4\times [0,1]\to X_\gamma^4$ is a compact operator.

Moreover,  Lemma \ref{L3.3} implies that there exists $C>0$, independent of $\sigma$, such that
the following holds:\\
(iii) if $(v_\sigma, H_\sigma)\in X_\gamma^4$ is a fixed point of $\mathbb T(\cdot,\cdot; \sigma)$, i.e.,
$$\Big({v}_\sigma, {H}_\sigma\Big)=\mathbb T(v_\sigma,H_\sigma; \sigma),$$
then
$$\big\|(v_\sigma, H_\sigma)\big\|_{X_\gamma^4}\le C.$$
Thus the Leray-Schouder fixed point theorem implies that  
there is  a fixed point $(v, H)\in X_\gamma^4 $ of $\mathbb T(\cdot, \cdot; 1)$.
It is readily seen that $u(x,t)=e^{t\Delta} u_0+\frac1{\sqrt{t}}v(\frac{x}{\sqrt{t}}),
F(x,t)=e^{t\Delta} F_0+\frac1{\sqrt{t}}H(\frac{x}{\sqrt{t}})$ solves (\ref{E1.2+}) and (\ref{IC1.1}), and
$(u,F)\in C^\infty(\R^3\times (0,\infty))\cap C^\gamma(\R^3\times [0,\infty)\setminus\{(0,0)\}))$.
This completes the proof.
\qed

\setcounter{section}{3}
\setcounter{equation}{0}
\section{Appendix}

In this appendix, we sketch a proof of the local existence of mild solutions to \eqref{E1.2+} or (1.3)$_\sigma$
for $0\le\sigma\le 1$.
For simplicity, we only consider \eqref{E1.2+} (or (1.3)$_\sigma$ for $\sigma=1$).

We begin by recalling a well-known fact on the Cauchy problem for the linear Stoke system (see, for example,
\cite{K}):
\begin{align}\label{E4.1}
\left\{
\begin{array}{llll}
&\partial_{t}u-\Delta u=\mbox{div}\ f-\nabla q \ &\  \mbox{in}\ \ Q_{T},\\
& \mbox{div}\ u=0 \ &\  \mbox{in}\ \ Q_{T},\\
&u(\cdot,0)=a(\cdot) \ &\  \mbox{in}\  \  \R^{3}.\\
\end{array}
\right.
\end{align}
Here $Q_{T}=\R^{3}\times (0,T)$.

\begin{Theorem}\label{T4.1}
For $m\geq 3$, suppose that
\begin{equation}\label{E4.2}
f\in L^{\frac{5m}6}(Q_{T})\cap L^{2}(Q_{T}),\ \ a\in L^{m}(\R^{3})\cap \mathbf {V}(\R^{3}).
\end{equation}
For any $T>0$, there is a pair of functions $(u,q)$  with the following properties:
\begin{equation}\label{E4.3}
\begin{cases} u\in C([0,T];L^{2})\cap L^{2}(0,T; {\bf V}(\R^3)),\\ 
\partial_{t}u\in L^{2}(0,T; ({\bf V}(\R^3))'),\\
u\in C([0,T];L^{m})\cap L^{\frac{5m}3}(Q_{T}),\\
q\in L^{\frac{5m}6}(Q_{T})\cap L^{2}(Q_{T}),
\end{cases}
\end{equation}
and ($u$, $q$) satisfy (\ref{E4.1}) in the sense of distributions, and 
\begin{equation*}\label{E4.6}
\|u(\cdot,t)-a(\cdot)\|_{L^m(\R^3)}\xrightarrow{t\rightarrow 0} 0.
\end{equation*}
Moreover, 
\begin{equation}\label{E4.7}
\big\|u\big\|_{L^\infty_tL^m_x(Q_T)}+\big\|u\big\|_{L^{\frac{5m}{3}}(Q_T)}
\leq C\Big(\|f\|_{L^{\frac{5m}{6}}(Q_T)}+\|a\|_{L^m(Q_T)}\Big).
\end{equation}
\end{Theorem}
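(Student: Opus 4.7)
The plan is to construct the mild solution via Duhamel's formula after eliminating the pressure through the Leray projection, then recover the pressure by elliptic means. The four main ingredients are: (a) the Helmholtz-Leray projection $\mathbb P$, (b) standard heat semigroup estimates, (c) parabolic $L^p$ Calderón-Zygmund theory for the kernel $\nabla e^{t\Delta}\mathbb P$, and (d) a Riesz-transform representation for the pressure.

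First, I would apply $\mathbb P$ to the first equation in \eqref{E4.1}; since $\mathbb P\nabla q=0$ and $\mathbb P a=a$ (as $a\in\mathbf V$), the system reduces to $\partial_t u-\Delta u=\mathbb P\,\mathrm{div}\,f$ with $u(0)=a$ and $\mathrm{div}\,u=0$. Setting
\[
u(t)=e^{t\Delta}a+\int_0^t e^{(t-s)\Delta}\,\mathbb P\,\mathrm{div}\,f(s)\,ds:=u_1(t)+u_2(t),
\]
I would estimate the two pieces separately.

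For $u_1=e^{t\Delta}a$, contractivity of the heat semigroup on $L^m$ gives $u_1\in C([0,T];L^m)$ with $\|u_1\|_{L^\infty_t L^m_x}\le\|a\|_{L^m}$; the standard energy identity yields $u_1\in C([0,T];L^2)\cap L^2(0,T;\mathbf V)$. For the space-time bound, I would use the smoothing estimate $\|e^{t\Delta}a\|_{L^{5m/3}_x}\le C t^{-3/(5m)}\|a\|_{L^m}$ and combine it with a Hardy-Littlewood-Sobolev / weak-type interpolation to obtain $\|u_1\|_{L^{5m/3}(Q_T)}\le C\|a\|_{L^m}$. For $u_2$, the kernel $\nabla e^{t\Delta}\mathbb P$ is a parabolic Calderón-Zygmund operator of order one, so parabolic $L^p$-maximal regularity provides $\|u_2\|_{L^{5m/3}(Q_T)}\le C\|f\|_{L^{5m/6}(Q_T)}$ together with $u_2\in C([0,T];L^2)\cap L^2_t\mathbf V$, $\partial_t u_2\in L^2_t(\mathbf V)'$, with the corresponding bounds in terms of $\|f\|_{L^2(Q_T)}$.

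The pressure is recovered by taking the divergence of the momentum equation: using $\mathrm{div}\,u=0$ and $\mathrm{div}\,\partial_t u=0$, one obtains the elliptic equation $\Delta q=\mathrm{div}^2 f$ in $\mathbb R^3$. Hence $q=R_iR_j f_{ij}$, where $R_i$ denote the Riesz transforms, and the $L^p$-boundedness of $R_iR_j$ for $1<p<\infty$ gives $\|q\|_{L^{5m/6}(Q_T)}+\|q\|_{L^2(Q_T)}\le C(\|f\|_{L^{5m/6}(Q_T)}+\|f\|_{L^2(Q_T)})$. Strong convergence $\|u(\cdot,t)-a\|_{L^m}\to 0$ as $t\to 0$ follows from the strong continuity of the heat semigroup on $L^m$ applied to $u_1$ combined with a dominated-convergence argument on $u_2$ via a density approximation of $f$ by smooth compactly supported tensors.

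The main obstacle is the borderline case $m=3$ of the estimate for $u_1$ in $L^{5m/3}(Q_T)$: a naive computation of $\int_0^T t^{-3p/(5m)\cdot}\,dt$ with $p=5m/3$ gives a logarithmically divergent integral, so a direct $L^p$ argument fails and one must invoke either the Marcinkiewicz interpolation with weak-type endpoints or a duality argument reducing to the dual Strichartz bound for the heat equation. Once this endpoint estimate is in hand, the remaining steps are routine and follow the classical scheme for the Cauchy problem of the linear Stokes system in $\mathbb R^3$.
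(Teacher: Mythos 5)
The paper itself gives no proof of Theorem~\ref{T4.1}; it is stated as a ``well-known fact'' and attributed to Kato~\cite{K}. Your sketch is essentially the standard argument one finds there (and in Giga's related work): project onto divergence-free fields, represent the solution via Duhamel with the Oseen kernel, estimate the free-evolution and Duhamel pieces separately, and recover the pressure from $\Delta q=\operatorname{div}^2 f$ via Riesz transforms. So the route you take is the expected one.

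One correction to your closing remark. You say the obstacle in estimating $\|e^{t\Delta}a\|_{L^{5m/3}(Q_T)}$ is ``the borderline case $m=3$.'' In fact the naive pointwise-in-time argument fails for \emph{every} $m\ge 3$, not just $m=3$: with the smoothing estimate $\|e^{t\Delta}a\|_{L^{5m/3}_x}\lesssim t^{-3/(5m)}\|a\|_{L^m}$ and $p=\tfrac{5m}{3}$, the time integral is
\begin{equation*}
\int_0^T t^{-\frac{3p}{5m}}\,dt=\int_0^T t^{-1}\,dt,
\end{equation*}
which diverges logarithmically for all $m$, because $L^{\frac{5m}{3}}(Q_T)$ is the exponent that makes the bound scale-invariant. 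The remedy you propose (a weak-type estimate in $t$ for each fixed target Lebesgue index and then Marcinkiewicz/real interpolation, or duality with the dual heat-Strichartz bound) is precisely Giga's proof, and it is needed uniformly in $m\ge 3$, not only at $m=3$. A second, smaller imprecision: for the Duhamel term $u_2$ you invoke ``parabolic Calder\'on--Zygmund / maximal regularity,'' but the estimate $L^{5m/6}(Q_T)\to L^{5m/3}(Q_T)$ is not a same-exponent CZ bound; the kernel $\nabla S$ has parabolic homogeneity $-4=-(5-1)$, so $u_2=\Phi f$ is a parabolic Riesz potential of order one and the relevant tool is the parabolic Hardy--Littlewood--Sobolev inequality, with $\frac{6}{5m}-\frac{3}{5m}=\frac{3}{5m}\le\frac15$ for $m\ge 3$ ensuring (sub)criticality. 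Neither point changes the outcome, but they are worth stating correctly.
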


Now we want to apply Theorem \ref{T4.2} to show the local existence of mild solutions to
\eqref{E1.1+}. 
\begin{Theorem}\label{T4.2} For $m\ge 3$, 
assume $u_0\in L^{m}(\R^3,\R^3)$ and $F_0\in L^m(\R^3,\R^{3\times 3})$ with
$\mbox{div} u_0=0$ and $\mbox{div} F_0=0$. Then there exist $T_*>0$, depending only on $(u_{0},F_{0})$,
and a triple of functions $u, F, q$ such that 
\begin{equation}\label{E4.14}
\begin{cases}
(u, F)\in C([0,T_*];L^{2}(\R^3,\R^3\times \R^{3\times 3})\cap L^{2}(0,T; {\bf V}(\R^3)),\\
(\partial_{t}u,\partial_{t}F)\in L^{2}(0,T_*; ({\bf V}(\R^3))'),\\
(u,F)\in C([0,T_*];L^{m}(\R^3))\cap L^{\frac{5m}3}(Q_{T_*}),\\
q\in L^{\frac{5m}6}(Q_{T_*})\cap L^{2}(Q_{T_*})\cap C([0,T_*];L^{\frac{m}2}(\R^3)),
\end{cases}
\end{equation}
and $(u,F, q)$ satisfy (\ref{E1.2+}) in the sense of distributions, and 
\begin{equation}\label{E4.17}
\big\|u(\cdot,t)-u_{0}(\cdot)\big\|_{L^m(\R^3)}+\big\|F(\cdot,t)-F_{0}(\cdot)\big\|_{L^m(\R^3)}\xrightarrow{t\rightarrow 0} 0.
\end{equation}
\end{Theorem}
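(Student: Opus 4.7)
I would construct the mild solution by a Kato-type contraction in the natural Kato space, using Theorem~\ref{T4.1} as the linear engine. The first step is to put every nonlinear term of \eqref{E1.2+} (with $\mu=\nu=1$) in divergence form: using $\mbox{div}\,u=\mbox{div}\,F=0$, one has
$$(u\cdot\nabla)u=\nabla\cdot(u\otimes u),\qquad u_k\partial_k F_{ij}=\partial_k(u_k F_{ij}),\qquad (\partial_k u_i)F_{kj}=\partial_k(u_i F_{kj}),$$
the last identity using $\partial_k F_{kj}=0$. Hence \eqref{E1.2+} takes the form
$$\partial_t u-\Delta u+\nabla q=\nabla\cdot f^1,\qquad \partial_t F-\Delta F=\nabla\cdot f^2,$$
with $f^1:=FF^t-u\otimes u$ and $(f^2)^k_{ij}:=u_i F_{kj}-u_k F_{ij}$, which matches the structure of the Stokes/heat system \eqref{E4.1}.

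Next, introduce the Kato-type space $X_T$ of pairs $(u,F)$ with norm
$$\|(u,F)\|_{X_T}:=\|u\|_{L^\infty_t L^m_x(Q_T)}+\|u\|_{L^{5m/3}(Q_T)}+\|F\|_{L^\infty_t L^m_x(Q_T)}+\|F\|_{L^{5m/3}(Q_T)},$$
and the Duhamel map $\Phi(u,F):=(\tilde u,\tilde F)$ given by applying Theorem~\ref{T4.1} with source $\nabla\cdot f^{1,2}[u,F]$ and initial datum $u_0$, respectively $F_0$. H\"older's inequality yields $\|f^1\|_{L^{5m/6}(Q_T)}+\|f^2\|_{L^{5m/6}(Q_T)}\leq C\|(u,F)\|_{X_T}^2$, so that \eqref{E4.7} gives the bilinear estimate
$$\|\Phi(u,F)-(e^{t\Delta}u_0,e^{t\Delta}F_0)\|_{X_T}\leq C\|(u,F)\|_{X_T}^2,$$
together with an analogous Lipschitz bound for $\Phi(u,F)-\Phi(u',F')$. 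A standard contraction argument on a small ball in $X_{T_*}$ then produces a fixed point, provided the linear propagator satisfies $\|e^{t\Delta}u_0\|_{L^{5m/3}(Q_T)}+\|e^{t\Delta}F_0\|_{L^{5m/3}(Q_T)}\to 0$ as $T\to 0^+$. To upgrade the fixed point to the $\mathbf V$-regularity in \eqref{E4.14}, I approximate $(u_0,F_0)$ by smooth divergence-free data $(u_0^k,F_0^k)\in L^m\cap\mathbf V$, run the contraction with uniform $X_{T_*}$ bounds, and pass to the limit using Aubin--Lions compactness together with the bilinear continuity just established.

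The pressure is recovered from $-\Delta q=\mbox{div}^2(u\otimes u-FF^t)$ by the Calder\'on--Zygmund transform, which yields $q\in L^{5m/6}(Q_{T_*})\cap L^2(Q_{T_*})\cap C([0,T_*];L^{m/2}(\R^3))$ since $u\otimes u-FF^t$ lies in the same spaces (the $C_tL^{m/2}$ inclusion coming from $C_tL^m$ continuity of $(u,F)$ together with H\"older). The initial continuity \eqref{E4.17} combines strong $L^m$-continuity of the heat semigroup at $t=0$ with the vanishing of the Duhamel correction in $L^m$, controlled by $\|f^{1,2}\|_{L^{5m/6}(Q_t)}\to 0$. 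The main obstacle is the criticality of the $L^m\to L^{5m/3}$ smoothing: the naive pointwise semigroup bound gives $\int_0^T t^{-1}\,dt=\infty$, so the required uniform-in-$T$ linear smallness cannot be read off from the kernel estimate alone. The resolution is a density argument — split $u_0=u_0^\epsilon+(u_0-u_0^\epsilon)$ with $u_0^\epsilon\in C_c^\infty$, handle the smooth part by dominated convergence, and control the rough remainder via \eqref{E4.7} applied to the linear Stokes problem with initial datum $u_0-u_0^\epsilon$, whose $L^m$ norm is arbitrarily small.
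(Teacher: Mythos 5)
Your proposal is correct and follows essentially the same route as the paper: a Kato-type fixed-point iteration built on Theorem~\ref{T4.1} as the linear engine, with the nonlinearity of \eqref{E1.2+} recast in divergence form so that \eqref{E4.7} gives the quadratic estimate $\|\cdot\|_{X_T}\le \kappa(T_*)+C\|\cdot\|_{X_T}^2$, and smallness coming from $\kappa(T_*)\to 0$ as $T_*\to 0^+$. The one place you go beyond the paper's exposition is worth noting: the paper asserts ``this is clearly possible, since both $u^1$ and $F^1$ solve the heat equation,'' whereas you correctly observe that the naive pointwise bound $\|e^{t\Delta}u_0\|_{L^{5m/3}_x}\lesssim t^{-3/(5m)}\|u_0\|_{L^m}$ produces the divergent integral $\int_0^T t^{-1}\,dt$, so the claim actually rests on the $T$-uniform bound $\|e^{t\Delta}a\|_{L^{5m/3}(Q_T)}\le C\|a\|_{L^m}$ (contained in \eqref{E4.7} with $f=0$) combined with the density of smooth compactly supported divergence-free fields in $L^m$; making this splitting explicit is a genuine clarification of the paper's argument, and the remainder of your outline (contraction in $X_{T_*}$ versus the paper's Picard iteration, $\mathbf V$-regularity by approximation and Aubin--Lions, pressure recovery by Calder\'on--Zygmund from $-\Delta q=\mathrm{div}^2(u\otimes u-FF^t)$, and initial $L^m$-continuity) matches the paper step for step.
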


\begin{proof}\ The proof is based on the standard successive iterations (see, for instance, \cite{K}). Here we
only sketch it.  Let
\begin{align}\label{EN4.9}
\begin{split}
&u^{1}(\cdot,t)=\Gamma(\cdot,t)\ast u_{0},\ \  F^{1}(\cdot,t)=\Gamma(\cdot,t)\ast F_{0},\\
&\kappa(T_{*})=\|u^{1}\|_{L^{\frac{5m}{3}}(Q_{T_{*}})}+\|F^{1}\|_{L^{\frac{5m}{3}}(Q_{T_{*}})},
\end{split}
\end{align}
and for $k\ge 1$, set
$$
u^{k+1}=w+u^{1}, F^{k+1}=G+F^{1},
$$
where $(w,G)$ solves
\begin{align}\label{E4.19}
\left\{
\begin{array}{llll}
\partial_{t}w-\Delta w=\nabla\cdot F^{k}(F^{k})^{t}-(u^{k}\cdot\nabla)u^{k}-\nabla q^{k};\\
\partial_{t}G-\Delta G=\nabla u^{k}F^{k}-(u^{k}\cdot\nabla)F^{k};\\
\nabla\cdot w=0, \nabla\cdot G=0;\\
w(x,0)=0, G(x,0)=0.
\end{array}
\right.
\end{align}
According to Theorem 4.1, we have the estimate
\begin{align*}
\|u^{k+1}-u^{1}\|_{L^{\frac{5m}{3}}(Q_{T_*})}+\|F^{k+1}-F^{1}\|_{L^{\frac{5m}{3}}(Q_{T_*})}
\leq C_{0}\Big(\|u^{k}\|_{L^{\frac{5m}{3}}(Q_{T_*})}+\|F^{k}\|_{L^{\frac{5m}{3}}(Q_{T_*})}\Big)^2.
\end{align*}
This estimate can be rewritten as
\begin{align}\label{E4.20}
\|u^{k+1}\|_{L^{\frac{5m}{3}}(Q_{T_*})}+\|F^{k+1}\|_{L^{\frac{5m}{3}}(Q_{T_*})}
&\leq C\Big(\|u^{k}\|_{L^{\frac{5m}{3}}(Q_{T_*})}
+\|F^{k}\|_{L^{\frac{5m}{3}}(Q_{T_*})}\Big)^{2}\nonumber\\
&\quad +\|u^{1}\|_{L^{\frac{5m}{3}}(Q_{T_*})}
+\|F^{1}\|_{L^{\frac{5m}{3}}(Q_{T_*})}.
\end{align}
We now want to show that $T_{*}$ can be chosen in such a way that
\begin{align}\label{E4.21}
\|u^{k+1}\|_{L^{\frac{5m}{3}}(Q_{T_*})}+\|F^{k+1}\|_{L^{\frac{5m}{3}}(Q_{T_*})}
\leq 2\kappa(T_{*})
\end{align}
for all $k\ge 1$.

We argue by induction on $k$. By virtue of \eqref{E4.20},
$$
\|u^{k+1}\|_{L^{\frac{5m}{3}}(Q_{T_*})}+\|F^{k+1}\|_{L^{\frac{5m}{3}}(Q_{T_*})}
\leq 4C_{0}\kappa^{2}(T_{*})+\kappa(T_{*}).
$$
Obviously, (\ref{E4.21}) holds if we choose $T_{*}$ such that
\begin{align}\label{E4.22}
\kappa(T_{*})<\frac{1}{4C_{0}}.
\end{align}
This is clearly possible, since both $u^1$ and $F^1$ solve the heat equation with initial condition $u_0$ and $F_0$.
From (\ref{E4.21}), we can apply the estimates of parabolic equations and  and  establish all the
statements of Theorem \ref{T4.2} by letting $(u, F, q)$ be the limit of $(u_k, F_k, q_k)$
as $k\rightarrow\infty$, except the continuity of $\|(u(t),F(t))\|_{L^m(\R^3)}$ with respect to $t$, which is a consequence of Theorem \ref{T4.1}. 
The continuity of $\|q(t)\|_{L^{\frac{m}2}(\R^3)}$,  as a function of $t$, follows from
the $L^p$-estimate of the pressure equation
$$
\Delta q=-\mbox{div}^2 \big(u\otimes u+FF^t\big),
$$
and the continuity of $\|(u(t),F(t))\|_{L^m(\R^3)}$ as function of $t$.
This completes the proof of Theorem \ref{T4.2}.
\end{proof}

\noindent{\bf Acknowledgements}. The first author is partially supported by
NSF of China 11201119, 11471099, and the International Cultivation of Henan Advanced Talents  
and Research Foundation of Henan University (yqpy20140043).
The second author is partially supported by the Fundamental Research Funds for the Central Universities,
SCUT 2015ZM183 and NSF of China 11571117. The third author is partially supported by NSF DMS
1522869.

\bigskip
\noindent
Baishun Lai: Institute of Contemporary Mathematics, Henan University, Kaifeng 475004, P.R. China

\noindent
{\sl E-mail address}: {\tt laibaishun@henu.edu.cn}

\medskip
\noindent
Junyu Lin: Department of Mathematics, South China  University of Technology, Guangzhou 510640, P. R. China.

\noindent
{\sl E-mail address}: {\tt scjylin@sctu.edu.cn}

\medskip
\noindent
Changyou Wang: Department of Mathematics, Purdue University, 150 N. University Street, West Lafayette, IN 47907, USA

\noindent
{\sl E-mail address}: {\tt wang2482@purdue.edu}

\end{document}